\algrenewcommand\algorithmicrequire{\textbf{Input:}}
\algrenewcommand\algorithmicensure{\textbf{Output:}}
\titleformat{\section}{\large\bfseries}{\thesection}{1em}{}
\let\oldcite\cite
\renewcommand{\cite}[1]{\mbox{\oldcite{#1}}}
\newtheorem{theorem}{Theorem}
\newtheorem{lemma}{Lemma}
\newtheorem{corollary}{Corollary}
\newtheorem{proposition}{Proposition}
\theoremstyle{definition}
\newtheorem{remark}{Remark}
\newtheorem*{remark*}{Remark}
\numberwithin{equation}{section}
\title{Detecting practically significant dependencies in metric spaces via distance correlations
}
\author{Holger Dette\footnote{Faculty of Mathematics, Ruhr-Universität Bochum, \href{mailto:holger.dette@rub.de}{holger.dette@rub.de}} \and Marius Kroll\footnote{Faculty of Mathematics, Ruhr-Universität Bochum, \href{mailto:marius.kroll@rub.de}{marius.kroll@rub.de}}}
\begin{document}
\maketitle

\begin{abstract}
  We take a different look at the problem of testing the independence of two metric-space-valued random variables using the distance correlation. Instead of testing if the distance correlation vanishes exactly, we are interested in the hypothesis that it does not exceed a certain threshold. Our testing problem is motivated by the observation that in many cases it is more reasonable to test for a practically significant dependency since it is rare that a hypothesis of perfect independence is exactly satisfied. This point of view  also reflects statistical practice, where one often classifies the strength of the association in categories such as `small', `medium' and `large' and the precise definitions depend on the specific application. To address these problems we develop a pivotal test for the hypothesis that the distance correlation between two random variables does not exceed a pre-specified threshold $\Delta$. We also determine a minimum value $\hat \Delta_\alpha$ from the data such that the hypothesis is rejected for all $\Delta \leq \hat \Delta_\alpha$ at controlled type I error $\alpha$. This quantity can be interpreted as a measure of evidence against the hypothesis that the distance correlation is less or equal than $\Delta$. The new test is applicable to processes taking values in separable metric spaces of strong negative type, covering Euclidean as well as functional data. We do not assume independent observations, and instead prove our results for absolutely regular sample generating processes, which includes many time series such as ARMA and GARCH models. Our approach is based on a new functional limit theorem for the sequential distance correlation process, and can also be used to construct confidence intervals for the distance correlation without the need for resampling.
  \\[5mm]
  \noindent\textbf{MSC2020 Classification:} 62G10; 62G15; 60F17.\\
  \noindent\textbf{Keywords and phrases:} Relevant Hypotheses; Distance Covariance; Distance Correlation; Self-Normalisation; Tests for Independence.

\end{abstract}

\section{Introduction}
\label{sec:introduction}

One of the fundamental problems in statistics is to  measure and  test for a  statistical association between two quantities of interest. Besides the popular Pearson’s correlation coefficient \citep{Pearson1920}, Spearman’s  $\rho $ \citep{spearman:1904} and Kendall’s $\tau$ \citep{Kendall1938} numerous other measures have been proposed in the literature for two-dimensional data, see \cite{Hoeffding1948} and \cite{blum1961} for some further early references as well as \cite{gretton2008}, \cite{Heller2012}, \cite{Dette.2012}, \cite{bergsma2014}, \cite{Albert2014}, \cite{Geenens.2020} and \cite{ Chatterjee.2020} for recent references. Besides these and other measures of dependence the {\it distance covariance} and {\it distance correlation} have found considerable interest since their original introduction by \cite{szekely_rizzo_bakirov:2007} and \cite{szekely:2009, szekely:2012}. These authors discussed the distance covariance in the context of Euclidean data and \cite{lyons:2013} generalised the theory to separable metric spaces.

The distance covariance proves exceedingly useful as a tool in  statistics. Even the early work by \cite{szekely_rizzo_bakirov:2007} includes critical values for a test of independence based on the distance covariance. \cite{szekely:2013} construct a $t$-test for independence of high-dimensional data; \cite{dehling_et_al:2020} test for independence of time-continuous empirical processes; \cite{davis_et_al:2018} consider the distance covariance in the context of time series, resulting in a goodness-of-fit test; \cite{fokianos:2017} develop a Box-Ljung-type test for serial independence; \cite{zhou:2012} defines  the auto  distance covariance and correlation for time series under the assumption of physical dependence; \cite{betkendehling:2022} and \cite{betkendehlingkroll:2022}  propose independence tests for long-range and  short-range dependent time series. The importance of the distance correlation  for measuring dependencies between non-Euclidean data becomes  apparent from the work \cite{sejdinovic:2013} who  prove that the distance covariance and the Hilbert-Schmidt independence criterion (HSIC) as introduced by \cite{gretton:2005} are in a certain sense equivalent. This   provides a new point of view on  the rich literature in this field. For some examples we refer to the work of \cite{gretton:2005,gretton2008} as well as \cite{gretton_györfi:2010}, who construct HSIC-independence-tests; \cite{zhang_et_al:2008} who apply the HSIC to non-i.i.d.\@ data; \cite{wang_et_al:2021} who test for independence between time series; \cite{sen_sen:2014} who propose a goodness-of-fit test  for  linear models. A literature survey of the distance covariance is given in \cite{edelman_et_al:2019} and a recent article on independence testing via distance covariance is \cite{dettetang2024},  who propose a distance covariance (and a corresponding independence test) which does not require the existence of moments.

These papers differ in several aspects, such as the type of data, the kind of serial dependence that is assumed (if any), the test statistics
and methods to obtain critical values. However, a common feature of all cited references that construct independence tests consists in the fact that these are proposed for the hypothesis of exact independence, which is  characterized by a vanishing distance covariance or correlation. We argue that there are many applications where it is in fact clear from the scientific problem that the association between two quantities might be small but cannot be exactly $0$ (which corresponds to complete independence), and one is in fact interested in testing for a practically significant deviation from independence.
For example, \cite{yaoetal2017} considered the problem
of  the mutual independence of a part of the components of a high-dimensional vector using sums of the corresponding pairwise empirical distance covariances. Their test rejects the null hypothesis of vanishing distance covariances, although the authors observe  that   the dependencies are relatively weak, see   Section 7 in in this reference.

In the present paper we therefore take a different point of view on the problem of testing independence
of metric-space-valued data via the distance covariance $\mathrm{dcov}(X,Y) $ and correlation $\mathrm{dcor}(X,Y)  $ between two random vectors, say  $X$ and $Y$ (the precise definition of the distance correlation will be given in equation \eqref{det1} in Section \ref{sec2}).
Instead of testing for exact independence we propose to test the hypothesis that the distance correlation (covariance) does not exceed a given threshold, say $\Delta$, that is
\begin{equation}
  \label{det2}
  H_0^{\rm rel}:
  \mathrm{dcor}(X,Y)  \leq \Delta
  ~~ \text{versus} ~~ H_
  1^{\rm rel}:  \mathrm{dcor}(X,Y)  > \Delta ~.
\end{equation}
Note that these  hypotheses include the ``classical'' null hypothesis of independence, which is obtained by the choice $\Delta=0$. However in this paper we focus on the case
$\Delta >0$, assuming that it  is clear from the scientific context that there is some (potentially small)  association between $X$ and $Y$, and that the real question is if  this association is practically significant.
This point of  view is in line with \cite{tukey1991}, who  argues in the context of  comparisons of means that {\it  `all we know about the world teaches us that the
    effects of A and B are always different  -- in some
    decimal place -- for any A and B.  Thus asking ``Are
    the effects different?'' is foolish.'}  Translated to the problem of independence testing this means that it might not be reasonable to ask if $X$ and $Y$ are dependent (as there usually exists a dependence but its  strength might be very small).

Developing a statistical tests for the hypotheses \eqref{det2} is non-trivial as one has to control the level under a composite null hypothesis. Consequently, standard tools such as asymptotic inference, permutation or bootstrap tests are not easily applicable for testing hypotheses of the form \eqref{det2}. For example, the asymptotic variance of the common distance correlation estimator in the case of dependent components has a very complicated structure and is hard to estimate. Similarly, for implementation of powerful bootstrap tests one has to generate data under the restriction
$\mathrm{dcor}(X,Y) = \Delta$. In the present paper
we will develop  a very simple and
pivotal test for these hypotheses. Interestingly, this test turns out to exhibit some monotonicity properties that allow us -- even in cases where it is difficult to define a threshold before the experiment -- to
identify the minimum value of $\Delta $
for which
$H_0$ in \eqref{det2} is not rejected at a controlled type I error. This value can be interpreted as a measure of evidence against the null hypothesis in \eqref{det2} (see Remark \ref{rem:visual_inspection}(a) below). As a consequence we are able to provide a data-adaptive way of choosing the threshold in \eqref{det2}.

Our approach is applicable to strictly stationary time series with components taking values in metric spaces, making it feasible to analyse the dependence between data with a potentially complex structure. We work under a short range dependent framework; specifically, we assume that the data generating process is $\beta$-mixing with a polynomial rate of decay. In particular, this covers many commonly used time series models. Besides these contributions from the statistical side this work makes also several contributions from a theoretical point of view. In particular we develop an invariance principle for the sequential process of the empirical distance covariance and correlation based on dependent random variables taking values in metric spaces, which is the basic tool for the pivotal inference proposed here.

\section{Preliminaries}
\subsection{Distance covariance and correlation in metric spaces}
\label{sec2}

Suppose that $(\mathcal{X}, d_\mathcal{X})$ and $(\mathcal{Y}, d_\mathcal{Y})$ are two separable metric spaces equipped with their Borel-$\sigma$-algebras. For an $\mathcal{X} \times \mathcal{Y}$-valued random variable $(X,Y) $ with distribution $\mathbb{P}^{X,Y}$ whose marginals $\mathbb{P}^{X}$ on $\mathcal{X}$ and $\mathbb{P}^{Y}$ on $\mathcal{Y}$ have finite first moments we define the distance covariance of $(X,Y)$ as
$$
  \mathrm{dcov}(X,Y) :=
  \mathrm{dcov}(\mathbb{P}^{X,Y}) :=
  \int h'(z_1, \ldots, z_6) ~  \mathrm{d}\mathbb{P}^{X,Y} (z_1 ) \ldots \mathbb{P}^{X,Y} (z_6),
$$
where $z_i := (x_i, y_i) \in \mathcal{X} \times \mathcal{Y}$ ($i=1, \ldots , 6$), and the kernel $h'$ is defined by
\begin{align}
  \begin{split}
    \label{eq:kern_h'}
    h'(z_1, \ldots, z_6) := & \left[ d_\mathcal{X}(x_1, x_2) - d_\mathcal{X}(x_1, x_3) - d_\mathcal{X}(x_2, x_4) + d_\mathcal{X}(x_3, x_4) \right]              \\
                            & \quad \cdot \left[ d_\mathcal{Y}(y_1, y_2) - d_\mathcal{Y}(y_1, y_5) - d_\mathcal{Y}(y_2, y_6) + d_\mathcal{Y}(y_5, y_6) \right].
  \end{split}
\end{align}
This definition follows that of \cite{lyons:2013}, which differs from that of \cite{szekely_rizzo_bakirov:2007} by a power of $2$; taking the square root of what we call $\mathrm{dcov}(X,Y)$ results in what in the Euclidean case \citep{szekely_rizzo_bakirov:2007} is called $\mathcal{V}(X,Y)$. The same holds for the distance correlation, to be introduced below. This is a purely conventional difference, but it should be kept in mind if one wishes to write code based on our methods. For instance, the commonly used R-package \texttt{energy} \citep{energy} implements the distance covariance and distance correlation in the sense of \cite{szekely_rizzo_bakirov:2007}.

A key property of the distance covariance is that it vanishes if and only if $X$ and $Y$ are independent: $\mathrm{dcov}(X,Y) = 0$ if and only if $\mathbb{P}^{X,Y} = \mathbb{P}^{X}\otimes \mathbb{P}^{Y}$. This property holds provided that the underlying metric spaces $(\mathcal{X}, d_\mathcal{X})$ and $(\mathcal{Y}, d_\mathcal{Y})$ are of so-called strong negative type. The precise definition is technical and can be found in Section 3 of \cite{lyons:2013}. For the purposes of this article, it is enough to point out that many spaces which are useful in practice are of strong negative type. For instance, separable Hilbert spaces \citep[Theorem 3.16 in ][]{lyons:2013}, separable ultrametric spaces \citep{timan_vestfried:1983} and hyperbolic spaces \citep{lyons:2014} are all of strong negative type. There is also the broader class of spaces of negative type (without the `strong'), and these can be turned into spaces of strong negative type by raising their metrics to the $r$-th power for some $0 < r < 1$ \citep[Remark 3.19 in][]{lyons:2013}. Spaces of negative type include $L_p$ spaces for $1 < p \leq 2$, and the trick of raising the metric to some $0 < r < 1$ also has the nice side effect of resolving potential issues if $X$ and $Y$ do not satisfy the necessary moment assumptions. We refer to \cite{lyons:2013} for more information about the connection between the distance covariance and spaces of (strong) negative type.

A closely related quantity to  $\mathrm{dcov}(X,Y)$ is the distance correlation between two random variables $X$ and $Y$, which  is defined as
$$
  \mathrm{dcor}(X,Y) := \mathrm{dcor}(\mathbb{P}^{X,Y}) :=  \frac{\mathrm{dcov}(X,Y)}{\sqrt{\mathrm{dcov(X,X) \, \mathrm{dcov}(Y,Y)}}},
$$
whenever  $\mathrm{dcov}(X,X), \mathrm{dcov}(Y,Y) > 0$. By \cite{lyons:2013}, this is the case if neither $X$ nor $Y$ are almost surely constant. If one of the two random variables is almost surely constant, then the distance correlation is set to $0$. It is obvious that the characterisation of independence via the distance covariance carries over to the distance correlation, i.e.\@ $\mathrm{dcor}(X,Y) = 0$ if and only if $X$ and $Y$ are independent. Furthermore, the distance correlation is bounded by $1$, with equality if and only if there exists a continuous function $f : (\mathcal{X}, d_\mathcal{X}) \to (\mathcal{Y}, d_\mathcal{Y})$ such that $Y = f(X)$ almost surely and $d_\mathcal{X}(x,x') = \mathrm{const} \cdot d_\mathcal{Y}(f(x), f(x'))$ for all $x,x'$ in the support of the distribution of $X$, which we denote by $\mathcal{L}(X)$. If $\mathcal{X}$ and $\mathcal{Y}$ are Euclidean spaces, the function $f$ has the form $f(x) = a + \beta Cx$ for some vector $a$, a non-zero constant $\beta \in \mathbb{R}$ and an orthogonal matrix $C$. See \cite{lyons:2013} for the general result and \cite{szekely_rizzo_bakirov:2007} for the Euclidean special case.

Given a sample $(X_1, Y_1), \ldots, (X_n, Y_n)$ from a strictly stationary process $(X_k, Y_k)_{k \in \mathbb{N}}$, a natural estimator for the (usually unknown) value $\mathrm{dcov}(X,Y)$ is the empirical distance covariance
\begin{align}
  \label{det1}
  \mathrm{dcov}_n(X,Y) =\mathrm{dcov}(\hat{\mathbb{P}}_n) = \int h' ~\mathrm{d}\hat{\mathbb{P}}_n^6 = n^{-6} \sum_{1 \leq i_1, \ldots, i_6 \leq n} h'[(X_{i_1}, Y_{i_1}), \ldots, (X_{i_6}, Y_{i_6})],
\end{align}
where $\hat{\mathbb{P}}_n$ denotes the empirical measure of $(X_1, Y_1), \ldots, (X_n, Y_n)$, i.e.\@ the measure defined by $\hat{\mathbb{P}}_n(A) = n^{-1} \sum_{i=1}^n \textbf{1}\{(X_i, Y_i) \in A\}$, and $h'$ is the kernel from Eq.\@ \eqref{eq:kern_h'}. Under the assumption of ergodicity and finite first moments, it holds that this estimator converges almost surely to the true value of the distance covariance,
$$
  \mathrm{dcov}_n(X,Y)  \xrightarrow[n \to \infty]{a.s.}\mathrm{dcov}(X,Y).
$$
In this generality, this result can be found in \cite{kroll:2022}, though earlier results of this kind are given by \cite{lyons:2013} and \cite{janson:2021} under the assumption of i.i.d.\@ data in metric spaces and by \cite{davis_et_al:2018} for strictly stationary and mixing data in Euclidean space.

By definition, the empirical distance covariance is a $V$-statistic of order $6$ and kernel  $h'$ as defined in Eq.\@ \eqref{eq:kern_h'}. Thus, using asymptotic theory for $V$-statistics it can be  shown \citep{kroll:2023} that under some technical assumptions (most importantly, $(4+\varepsilon)$-moments and absolute regularity), it holds that
\begin{align}
  \label{det6}
  \sqrt{n}(\mathrm{dcov}_n(X,Y)   -\mathrm{dcov}(X,Y) ) \xrightarrow[n \to \infty]{\mathcal{D}} \mathcal{N}\left(0, \sigma^2\right)
\end{align}
with $\sigma^2 > 0$ if and only if $\mathrm{dcov}(\mathbb{P}^{X,Y}) > 0$ (otherwise, the rate of convergence is of order $1/n$).  A similar result will be derived in the present paper for a corresponding estimator, say  $\mathrm{dcor}_n(X,Y) $, for the distance correlation. In principle such results can be used to construct an asymptotic level $\alpha$ test for the hypotheses \eqref{det2} by rejecting $H_0$ for large values of $\mathrm{dcor}_n(X,Y) $. However such a test would require a precise estimation of the asymptotic variance
which is difficult for infinite dimensional dependent data. In the present paper we circumvent these problems and construct a pivotal test for the hypotheses \eqref{det2} using a sequential version of $\mathrm{dcor}_n(X,Y) $.

\subsection{Absolutely regular processes}
Throughout this article, we operate under the assumption that the $(X_1, Y_1), \ldots, (X_n, Y_n)$ are not necessarily i.i.d.\@, but may instead display some sort of serial dependence. We assume that the data generating process $(X_k, Y_k)_{k \in \mathbb{N}}$ is strictly stationary and $\beta$-mixing. $\beta$-mixing, also known as absolute regularity, falls in the regime of so-called short range dependence, in which the dependence between two observations $(X_k, Y_k)$ and $(Y_{k+l}, Y_{k+l})$ decreases as the lag $l$ increases. On a qualitative level, this behaviour is present in many time series describing natural phenomena. On a more technical level, the $\beta$-mixing coefficient of two $\sigma$-algebras $\mathcal{A}$ and $\mathcal{B}$ is defined as
$$
  \beta(\mathcal{A}, \mathcal{B}) = \sup \frac{1}{2}\sum_{i=1}^I \sum_{j=1}^J \left|\mathbb{P}(A_i \cap B_j) - \mathbb{P}(A_i) \mathbb{P}(B_j)\right|,
$$
where the supremum is taken over all finite partitions $A_1, \ldots, A_I \in \mathcal{A}$ and $B_1, \ldots, B_J \in \mathcal{B}$, $I,J \in \mathbb{N}$. A stochastic process $(U_k)_{k \in \mathbb{N}}$ is called $\beta$-mixing or absolutely regular if
$$
  \beta(n) = \sup_{j \in \mathbb{N}} \beta\left(\mathcal{F}_1^j, \mathcal{F}_{j+n}^\infty\right) \xrightarrow[n \to \infty]{} 0,
$$
where $\mathcal{F}_i^j$ denotes the $\sigma$-algebra generated by $U_i, \ldots, U_j$. The $\beta$-mixing condition was originally introduced by \cite{volkonskii_rozanov:1959, volkonskii_rozanov:1961}; for a detailed compilation of results on $\beta$-mixing sequences, especially in comparison with other mixing conditions such as $\alpha$-, $\phi$- or $\psi$-mixing, we refer to \cite{bradley:2007}, particularly Chapter 3. For the purposes of this article, it is enough to note that the $\beta$-mixing assumption is relatively mild, and that many important time series models are $\beta$-mixing. Time series models for which the $\beta$-mixing condition has been verified include ARMA models \citep{mokkadem:1988} and GARCH models \citep{carrasco_chen:2002}, but also more elementary models such as irreducible aperiodic countable-state Markov chains \citep[Theorem 7.7 in][]{bradley:2007}. Trivially, i.i.d.\@ sequences are also $\beta$-mixing. \cite{doukhan:mixing} contains more examples and alternative characterizations of different mixing conditions, including $\beta$-mixing. Additionally, $\beta$-mixing coefficients can be estimated even in the absence of model assumptions \citep{mcdonald_et_al:2015}.

\section{Pivotal inference for distance covariance and correlation }
\label{sec:mains}
Suppose that $(X_k, Y_k)$, $1 \leq k \leq n$, are copies of some generic random vector $(X,Y) \in \mathcal{X} \times \mathcal{Y}$, where $\mathcal{X}$ and $\mathcal{Y}$ are separable metric spaces of strong negative type. For any $n \in \mathbb{N}$, let $\mathrm{dcov}_n(X,Y)$ and $\mathrm{dcor}_n(X,Y)$ denote the empirical distance covariance and distance correlation, respectively, based on the sample $(X_1, Y_1), \ldots, (X_n, Y_n)$.
Recalling the definition of $\mathrm{dcov}_n(X,Y) $ in \eqref{det1} we define for $\lambda \in [0,1]$
\begin{align}
  \label{det4}
  \mathrm{dcov}_{\lfloor n\lambda\rfloor}(X,Y) :=\mathrm{dcov}(\hat{\mathbb{P}}_{\lfloor n\lambda\rfloor}),
\end{align}
as the sequential  empirical distance covariance of the sample $(X_1, Y_1), \ldots, (X_{\lfloor n\lambda\rfloor}, Y_{\lfloor n\lambda\rfloor})$, where $\hat{\mathbb{P}}_{\lfloor n\lambda\rfloor}$ is  the empirical measure of $(X_1, Y_1), \ldots, (X_{\lfloor n\lambda\rfloor}, Y_{\lfloor n\lambda\rfloor})$. $\lfloor \cdot \rfloor$ denotes the floor function, i.e.\@ for $\alpha \in \mathbb{R}$, $\lfloor \alpha \rfloor$ is the largest integer not exceeding $\alpha$. Similarly, we define  for $\lambda \in [0,1]$ the measures $\hat{\mathbb{P}}_{\lfloor n\lambda\rfloor}^{X,X}$ and  $\hat{\mathbb{P}}_{\lfloor n\lambda\rfloor}^{Y,Y}$
as the empirical distributions of the samples $(X_1, X_1), \ldots, (X_{\lfloor n\lambda\rfloor}, X_{\lfloor n\lambda\rfloor})$ and $(Y_1, Y_1), \ldots, (Y_{\lfloor n\lambda\rfloor}, Y_{\lfloor n\lambda\rfloor})$, respectively, and consider the corresponding distance variances
\begin{align}
  \label{det4a}
  \mathrm{dcov}_{\lfloor n\lambda\rfloor}(X, X) :=\mathrm{dcov}(\hat{\mathbb{P}}_{\lfloor n\lambda\rfloor}^{X,X}), \quad
  \mathrm{dcov}_{\lfloor n\lambda\rfloor}(Y, Y) :=\mathrm{dcov}(\hat{\mathbb{P}}_{\lfloor n\lambda\rfloor}^{Y,Y}).
\end{align}
The main theoretical tool  for the construction of a pivotal test for the hypotheses \eqref{det2} is the following limiting result, which gives the weak convergence of the $3$-dimensional sequential process defined by \eqref{det4} and \eqref{det4a}. The space $\ell^\infty[0,1]$ denotes the space of all bounded functions defined on $[0,1]$, equipped with the supremum norm, and the symbol $\rightsquigarrow$ denotes weak convergence in the sense of \cite{wellner_vandervaart:1996}.

\begin{theorem}
  \label{thm:dcov_prozesskonvergenz}
  For each $\lambda \in [0,1]$ define the random vector
  $$
    X_n(\lambda) := \frac{\lfloor n \lambda \rfloor}{n}
    \begin{pmatrix}
      \mathrm{dcov}_{\lfloor n \lambda \rfloor}(X,Y) \\ \mathrm{dcov}_{\lfloor n \lambda\rfloor}(X,X) \\ \mathrm{dcov}_{\lfloor n \lambda\rfloor}(Y,Y)
    \end{pmatrix}, \qquad X(\lambda) := \lambda
    \begin{pmatrix}
      \mathrm{dcov}(X,Y) \\ \mathrm{dcov}(X,X) \\ \mathrm{dcov}(Y,Y)
    \end{pmatrix},
  $$
  and consider the processes $X_n := (X_n(\lambda))_{0 \leq \lambda \leq 1}$ and $X := (X(\lambda))_{0 \leq \lambda \leq 1}$. Suppose that $X$ and $Y$ are not almost surely constant and have finite $(4+\varepsilon)$-moments for some $\varepsilon > 0$. Assume further that the sample generating process $(X_k, Y_k)_{k \in \mathbb{N}}$ is strictly stationary and absolutely regular with mixing rate $\beta(n) = \mathcal{O}\left(n^{-r}\right)$ for some $r > 6 + 24/\varepsilon$. Then it holds that
  $$
    \sqrt{n}(X_n - X) \rightsquigarrow \Gamma^\frac{1}{2} W
  $$
  in $(\ell^\infty[0,1])^3$, where $W = (B_1, B_2, B_3)^\top$ for three independent Brownian motions $B_1, B_2, B_3$ (restricted to the unit interval). The matrix $\Gamma$ is a covariance matrix depending on the distribution of the sample generating process and has the following property: If $\mathrm{dcov}(X,Y) = 0$, then the first row and the first column of $\Gamma^{1/2}$ consist entirely of $0$-entries.
\end{theorem}

The most important part of Theorem \ref{thm:dcov_prozesskonvergenz} is the sequential limit theorem for the distance covariance, which can be obtained by projecting $X_n$ and $X$ onto their first coordinates. A similar result can be obtained for the distance correlation. For this, define
\begin{align}
  \label{det7}
  \mathrm{dcor}_{\lfloor n\lambda\rfloor}(X,Y) & := \mathrm{dcor} (\hat{\mathbb{P}}_{\lfloor n\lambda\rfloor})
\end{align}
as the sequential  empirical distance correlation of the sample $(X_1, Y_1), \ldots, (X_{\lfloor n\lambda\rfloor}, Y_{\lfloor n\lambda\rfloor})$.

\begin{corollary}
  \label{cor:dcor_processkonvergenz}
  Fix some $r > 1/2$ and define the processes $Z_n, Z \in \ell^\infty[0,1]$ by
  $$
    Z_n(\lambda) = \lambda^r \mathrm{dcor}_{\lfloor n\lambda\rfloor}(X,Y)
  $$
  and $Z(\lambda) = \lambda^r \mathrm{dcor}(X,Y)$. Then, under the assumptions of Theorem \ref{thm:dcov_prozesskonvergenz}, it holds that
  $$
    \sqrt{n}(Z_n - Z) \rightsquigarrow L
  $$
  in $\ell^\infty[0,1]$, where the limit process $L$ is defined by $L(\lambda) = \lambda^{r-1} \langle v, \Gamma^{1/2}W(\lambda)\rangle$, $\Gamma^{1/2}W$ is the limit process from Theorem \ref{thm:dcov_prozesskonvergenz}, and
  $$
    v = \begin{pmatrix}
      \frac{1}{\sqrt{\mathrm{dcov}(X,X) \mathrm{dcov}(Y,Y)}} \\ - \frac{\mathrm{dcov}(X,Y)}{2\sqrt{\mathrm{dcov}(X,X)^3\mathrm{dcov}(Y,Y)}} \\ - \frac{\mathrm{dcov}(X,Y)}{2\sqrt{\mathrm{dcov}(X,X)\mathrm{dcov}(Y,Y)^3}}
    \end{pmatrix}.
  $$
  In particular, $L$ is tight, mean-zero and Gaussian.
\end{corollary}
The fact that the limiting process $L$ is an element of $\ell^\infty[0,1]$ is a consequence of the law of the iterated logarithm for the time-inversed Brownian motion, which states that the sample paths of a Brownian motion $B = (B(\lambda))_{\lambda \geq 0}$ are almost surely asymptotically bounded by  $\sqrt{\lambda \log (1/\lambda)}$ for $\lambda \downarrow 0$ \citep[see, for instance, Theorem 2.8.1 in][]{shorack_wellner:empirical_processes}.

Next we introduce a normalizing factor for the statistic
$\mathrm{dcov}_{ n}(X,Y) $, which is defined by
\begin{align*}
  V_{n,\mathrm{dcov}} & := \left\{\int \left(\lambda^2 \mathrm{dcov}_{\lfloor n\lambda\rfloor}(X,Y) - \lambda^2 \mathrm{dcov}_n(X,Y)\right)^2 ~\mathrm{d}\gamma(\lambda)\right\}^\frac{1}{2},
\end{align*}
where $\gamma$ is some arbitrary but fixed finite measure on the unit interval. Similarly, define
\begin{align}
  V_{n,\mathrm{dcor}} & := \left\{\int \left(\lambda^2 \mathrm{dcor}_{\lfloor n\lambda\rfloor}(X,Y) - \lambda^2 \mathrm{dcor}_n(X,Y)\right)^2 ~\mathrm{d}\gamma(\lambda)\right\}^\frac{1}{2}
  \label{hol100}
\end{align}
as a normalizing factor for the statistic $ \mathrm{dcor}_{ n }(X,Y)$.

\begin{theorem}
  \label{thm:schwache_konvergenz_dcov}
  Under the assumptions of Theorem \ref{thm:dcov_prozesskonvergenz}, there exists a constant $\sigma > 0$ and a standard Brownian motion $(B(\lambda))_{\lambda \geq 0}$ such that
  $$
    \sqrt{n}\begin{pmatrix}
      \mathrm{dcov}_n(X,Y) - \mathrm{dcov}(X,Y) \\ V_{n,\mathrm{dcov}}
    \end{pmatrix}
    \xrightarrow[n \to \infty]{\mathcal{D}}
    \sigma \begin{pmatrix}
      B(1) \\ \left\{\int \lambda^2 (B(\lambda) - \lambda B(1))^2 ~\mathrm{d} \gamma(\lambda)\right\}^\frac{1}{2}
    \end{pmatrix}.
  $$
\end{theorem}

\begin{theorem}
  \label{thm:schwache_konvergenz_dcor}
  Under the assumptions of Theorem \ref{thm:dcov_prozesskonvergenz}, there exists a constant $\tau \geq 0$ and a standard Brownian motion $(B(\lambda))_{\lambda \geq 0}$ such that
  $$
    \sqrt{n}\begin{pmatrix}
      \mathrm{dcor}_n(X,Y) - \mathrm{dcor}(X,Y) \\ V_{n,\mathrm{dcor}}
    \end{pmatrix}
    \xrightarrow[n \to \infty]{\mathcal{D}}
    \tau \begin{pmatrix}
      B(1) \\ \left\{\int \lambda^2 (B(\lambda) - \lambda B(1))^2 ~\mathrm{d} \gamma(\lambda)\right\}^\frac{1}{2}
    \end{pmatrix}.
  $$
\end{theorem}

These results can be used to derive a test for the hypothesis \eqref{det2}
of a practically significant deviation from independence measured by distance correlation or distance covariance. To be precise, let $w_{1-\alpha}$  denote the $(1-\alpha)$-quantile of the distribution of the random variable
\begin{equation}
  \label{eq:definition_W}
  W := \frac{B(1)}{\left\{\int \lambda^2 (B(\lambda) - \lambda B(1))^2 ~\mathrm{d} \gamma(\lambda)\right\}^\frac{1}{2}},
\end{equation}
then, if $\tau >0$,  it follows from Theorem \ref{thm:schwache_konvergenz_dcor} and the continuous mapping theorem that
\begin{align}
  \label{det8}
  \frac{\mathrm{dcor}_n(X,Y) - \mathrm{dcor}(X,Y)}{V_{n,\mathrm{dcor}}}
  \xrightarrow[n \to \infty]{\mathcal{D}}  W.
\end{align}
Therefore, we propose  to reject
the null hypothesis in \eqref{det2}, whenever
\begin{align}
  \label{det5}
  \frac{ \mathrm{dcor}_n(X,Y) - \Delta}{V_{n,\mathrm{dcor}}} > w_{1-\alpha}~.
\end{align}
The following result shows that this decision rule defines an  asymptotic level $\alpha$ test for the hypotheses \eqref{det2}, which is consistent against all alternatives.

\begin{proposition}
  \label{cor:test_dcov}
  Let $\Delta > 0$ be fixed and assume that the assumptions of Theorem \ref{thm:schwache_konvergenz_dcov} are satisfied. If the limiting variance from Theorem \ref{thm:schwache_konvergenz_dcor} satisfies $\tau > 0$, then the test defined by \eqref{det5} has asymptotic level $\alpha$ and is consistent. More precisely, we have
  $$
    \lim_{n \to \infty} \mathbb{P}\left(\frac{ \mathrm{dcor}_n(X,Y) - \Delta}{V_{n,\mathrm{dcor}}} > w_{1-\alpha}\right) = \begin{cases}0 \qquad &\textrm{if } \mathrm{dcor}(X,Y) < \Delta, \\ \alpha \qquad &\textrm{if } \mathrm{dcor}(X,Y) = \Delta, \\ 1 \qquad &\textrm{if } \mathrm{dcor}(X,Y) > \Delta.\end{cases}
  $$
\end{proposition}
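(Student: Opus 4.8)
The plan is to reduce the statement to Theorem~\ref{thm:schwache_konvergenz_dcor} plus two elementary facts about the limit variable $W$ in \eqref{eq:definition_W}. Abbreviate $D := \bigl\{\int \lambda^2 (B(\lambda) - \lambda B(1))^2 \, \mathrm{d}\gamma(\lambda)\bigr\}^{1/2}$, so that $W = B(1)/D$. I would first establish: (a) $D > 0$ almost surely, and (b) the law of $W$ has a strictly positive Lebesgue density (and is symmetric), so that $w_{1-\alpha}$ is well defined, $\mathbb{P}(W = w_{1-\alpha}) = 0$, and $\mathbb{P}(W > w_{1-\alpha}) = \alpha$. For (a), note that the assumed finite $p$-th moment of $\gamma$ with $p < -1$ forces $\gamma(\{0\}) = 0$; since $B(\lambda) - \lambda B(1) \sim \mathcal{N}(0,\lambda(1-\lambda))$ is non-degenerate for every fixed $\lambda \in (0,1)$, Fubini's theorem shows that the integrand of $D^2$ is $\gamma$-almost everywhere positive with probability one. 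For (b), recall the classical fact that the Brownian bridge $(B(\lambda) - \lambda B(1))_{\lambda \in [0,1]}$ is independent of $B(1)$ (they are jointly Gaussian and uncorrelated); hence $D$ is independent of $B(1) \sim \mathcal{N}(0,1)$, so conditionally on $\{D = d\}$ one has $W \sim \mathcal{N}(0, d^{-2})$, and averaging over $d > 0$ yields a scale mixture of centered Gaussians, which has a strictly positive symmetric density.

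Next I would decompose the test statistic. On the event $\{V_{n,\mathrm{dcor}} > 0\}$,
$$
\frac{\mathrm{dcor}_n(X,Y) - \Delta}{V_{n,\mathrm{dcor}}}
= \frac{\sqrt{n}\,\bigl(\mathrm{dcor}_n(X,Y) - \mathrm{dcor}(X,Y)\bigr)}{\sqrt{n}\,V_{n,\mathrm{dcor}}}
+ \frac{\sqrt{n}\,\bigl(\mathrm{dcor}(X,Y) - \Delta\bigr)}{\sqrt{n}\,V_{n,\mathrm{dcor}}}.
$$
By Theorem~\ref{thm:schwache_konvergenz_dcor}, $\bigl(\sqrt{n}(\mathrm{dcor}_n(X,Y) - \mathrm{dcor}(X,Y)),\, \sqrt{n} V_{n,\mathrm{dcor}}\bigr) \xrightarrow{\mathcal{D}} (\tau B(1), \tau D)$; since $\tau > 0$ and $D > 0$ a.s., the second coordinate of the limit is a.s.\ bounded away from zero, so by the portmanteau theorem $\mathbb{P}(V_{n,\mathrm{dcor}} = 0) \to 0$ and, by the continuous mapping theorem applied to $(a,b)\mapsto a/b$ (continuous off $\{b=0\}$), the first summand converges weakly to $\tau B(1)/(\tau D) = W$ — this is exactly \eqref{det8}. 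In particular the first summand is $O_{\mathbb{P}}(1)$ and $\sqrt{n} V_{n,\mathrm{dcor}} \xrightarrow{\mathcal{D}} \tau D$ with $\tau D > 0$ a.s.

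It then remains to analyse the three cases. If $\mathrm{dcor}(X,Y) = \Delta$, the second summand vanishes identically, so the statistic converges weakly to $W$ and, by (b), the rejection probability tends to $\mathbb{P}(W > w_{1-\alpha}) = \alpha$. If $\mathrm{dcor}(X,Y) < \Delta$, then $\sqrt{n}(\mathrm{dcor}(X,Y) - \Delta) \to -\infty$ while $\sqrt{n} V_{n,\mathrm{dcor}}$ converges weakly to the a.s.\ positive limit $\tau D$; hence the second summand tends to $-\infty$ in probability, and since the first summand is tight, the whole statistic tends to $-\infty$ in probability, so the rejection probability tends to $0$. The case $\mathrm{dcor}(X,Y) > \Delta$ is symmetric, with $+\infty$ in place of $-\infty$, so the rejection probability tends to $1$. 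Since $H_0^{\rm rel}$ corresponds to $\mathrm{dcor}(X,Y) \le \Delta$, the first two cases give asymptotic level $\alpha$ and the third gives consistency.

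I expect the only genuinely non-routine ingredients to be facts (a) and (b), and among them (b), the atomlessness of the law of $W$, is the decisive one: it is what upgrades the inequality $\mathbb{P}(W > w_{1-\alpha})$ to the exact value $\alpha$ in the boundary case $\mathrm{dcor}(X,Y) = \Delta$. The observation that makes (b) painless is the independence of $B(1)$ from the Brownian bridge, which decouples the numerator and denominator of $W$; everything else is Slutsky-type bookkeeping together with the continuous mapping theorem.
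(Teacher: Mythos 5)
Your proof is correct and follows essentially the same route as the paper: the identical decomposition of $(\mathrm{dcor}_n(X,Y)-\Delta)/V_{n,\mathrm{dcor}}$ into a term converging to $W$ via Theorem~\ref{thm:schwache_konvergenz_dcor} and the continuous mapping theorem, plus a drift term handled by Slutsky's lemma. The only difference is that you make explicit two facts the paper leaves implicit --- the almost sure positivity of the denominator $D$ and the atomlessness of the law of $W$ via the independence of $B(1)$ from the Brownian bridge --- both of which are indeed needed to justify the continuous mapping step and the exact boundary value $\alpha$.
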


\begin{remark} ~
  \label{rem2}
  {\rm
    It is possible for the limiting variance $\tau$ from Theorem \ref{thm:schwache_konvergenz_dcor} to be $0$. For instance, if $Y = f(X)$ almost surely for some isometric function $f$, it holds that $\mathrm{dcor}_n(X,Y) - \mathrm{dcor}(X,Y) = 1-1= 0$ for almost all $n$ almost surely, and hence $\tau = 0$. While we suspect that in most `real' data examples, the limiting variance $\tau$ will be strictly greater than $0$, this can also be checked by the practitioner directly. For this, they can use the fact that, with $B$ denoting a Brownian motion,
    \begin{equation}
      \label{eq:visual_inspection}
      \sqrt{n}\left[\lambda^2 \mathrm{dcor}_{\lfloor n\lambda\rfloor}(X,Y) - \lambda^2  \mathrm{dcor}_n(X,Y) \right]_{0 \leq \lambda \leq 1} \rightsquigarrow \tau \left[\lambda B(\lambda) - \lambda^2B(1)\right]_{0 \leq \lambda \leq 1}
    \end{equation}
    in $\ell^\infty[0,1]$. We establish this convergence in the proof of Theorem \ref{thm:schwache_konvergenz_dcor}. To be sure that the condition $\tau > 0$ is satisfied, one can therefore first perform a visual inspection of the sample path of the process on the left-hand side. If it is not (approximately) constant, we can be confident that $\tau > 0$, and then proceed with the proposed test. One advantage of this method is that the left-hand side is exactly the integrand in $V_{n,\mathrm{dcor}}$, and so the added computational cost is minimal. This technique  works very reliably in simulations, and   an illustration of it is given in Figure \ref{fig:visual_inspection}. The data in that figure were simulated according to a multivariate autoregressive model; the precise model will be introduced later in Section \ref{sec:finite_sample}, Eq.\@ \eqref{eq:var_model}.}

\end{remark}

\begin{figure}[t]
  \fontsize{8}{10}\selectfont

  \begingroup%
  \makeatletter%
  \providecommand\color[2][]{%
    \errmessage{(Inkscape) Color is used for the text in Inkscape, but the package 'color.sty' is not loaded}%
    \renewcommand\color[2][]{}%
  }%
  \providecommand\transparent[1]{%
    \errmessage{(Inkscape) Transparency is used (non-zero) for the text in Inkscape, but the package 'transparent.sty' is not loaded}%
    \renewcommand\transparent[1]{}%
  }%
  \providecommand\rotatebox[2]{#2}%
  \newcommand*\fsize{\dimexpr\f@size pt\relax}%
  \newcommand*\lineheight[1]{\fontsize{\fsize}{#1\fsize}\selectfont}%
  \ifx\svgwidth\undefined%
    \setlength{\unitlength}{\textwidth}%
    \ifx\svgscale\undefined%
      \relax%
    \else%
      \setlength{\unitlength}{\unitlength * \real{\svgscale}}%
    \fi%
  \else%
    \setlength{\unitlength}{\svgwidth}%
  \fi%
  \global\let\svgwidth\undefined%
  \global\let\svgscale\undefined%
  \makeatother%
  \begin{picture}(1,0.45454545)%
    \lineheight{1}%
    \setlength\tabcolsep{0pt}%
    \put(0,0){\includegraphics[width=\unitlength,page=1]{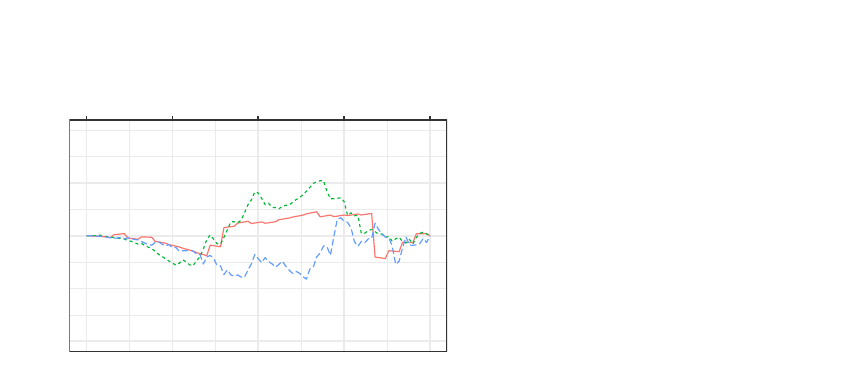}}%
    \put(0.10067272,0.32399999){\color[rgb]{0.30196078,0.30196078,0.30196078}\makebox(0,0)[t]{\lineheight{1.25}\smash{\begin{tabular}[t]{c}0.00\end{tabular}}}}%
    \put(0.20061817,0.32399999){\color[rgb]{0.30196078,0.30196078,0.30196078}\makebox(0,0)[t]{\lineheight{1.25}\smash{\begin{tabular}[t]{c}0.25\end{tabular}}}}%
    \put(0.30054546,0.32399999){\color[rgb]{0.30196078,0.30196078,0.30196078}\makebox(0,0)[t]{\lineheight{1.25}\smash{\begin{tabular}[t]{c}0.50\end{tabular}}}}%
    \put(0.40049092,0.32399999){\color[rgb]{0.30196078,0.30196078,0.30196078}\makebox(0,0)[t]{\lineheight{1.25}\smash{\begin{tabular}[t]{c}0.75\end{tabular}}}}%
    \put(0.50043634,0.32399999){\color[rgb]{0.30196078,0.30196078,0.30196078}\makebox(0,0)[t]{\lineheight{1.25}\smash{\begin{tabular}[t]{c}1.00\end{tabular}}}}%
    \put(0.07172727,0.0517091){\color[rgb]{0.30196078,0.30196078,0.30196078}\makebox(0,0)[rt]{\lineheight{1.25}\smash{\begin{tabular}[t]{r}-1.0\end{tabular}}}}%
    \put(0.07172727,0.11305453){\color[rgb]{0.30196078,0.30196078,0.30196078}\makebox(0,0)[rt]{\lineheight{1.25}\smash{\begin{tabular}[t]{r}-0.5\end{tabular}}}}%
    \put(0.07172728,0.17438182){\color[rgb]{0.30196078,0.30196078,0.30196078}\makebox(0,0)[rt]{\lineheight{1.25}\smash{\begin{tabular}[t]{r}0.0\end{tabular}}}}%
    \put(0.07172728,0.23572728){\color[rgb]{0.30196078,0.30196078,0.30196078}\makebox(0,0)[rt]{\lineheight{1.25}\smash{\begin{tabular}[t]{r}0.5\end{tabular}}}}%
    \put(0.07172728,0.29705454){\color[rgb]{0.30196078,0.30196078,0.30196078}\makebox(0,0)[rt]{\lineheight{1.25}\smash{\begin{tabular}[t]{r}1.0\end{tabular}}}}%
    \put(0,0){\includegraphics[width=\unitlength,page=2]{plot_visual_inspection_svg-tex.pdf}}%
    \put(0.0342,0.18010909){\rotatebox{90}{\makebox(0,0)[t]{\lineheight{1.25}\smash{\begin{tabular}[t]{c}Integrand value\end{tabular}}}}}%
    \put(0.52041822,0.02378182){\makebox(0,0)[rt]{\lineheight{1.25}\smash{\begin{tabular}[t]{r}$\rho = 0.5(\tau \approx 0.64)$\end{tabular}}}}%
    \put(0,0){\includegraphics[width=\unitlength,page=3]{plot_visual_inspection_svg-tex.pdf}}%
    \put(0.56032726,0.32399999){\color[rgb]{0.30196078,0.30196078,0.30196078}\makebox(0,0)[t]{\lineheight{1.25}\smash{\begin{tabular}[t]{c}0.00\end{tabular}}}}%
    \put(0.66027271,0.32399999){\color[rgb]{0.30196078,0.30196078,0.30196078}\makebox(0,0)[t]{\lineheight{1.25}\smash{\begin{tabular}[t]{c}0.25\end{tabular}}}}%
    \put(0.76019997,0.32399999){\color[rgb]{0.30196078,0.30196078,0.30196078}\makebox(0,0)[t]{\lineheight{1.25}\smash{\begin{tabular}[t]{c}0.50\end{tabular}}}}%
    \put(0.86014543,0.32399999){\color[rgb]{0.30196078,0.30196078,0.30196078}\makebox(0,0)[t]{\lineheight{1.25}\smash{\begin{tabular}[t]{c}0.75\end{tabular}}}}%
    \put(0.96009089,0.32399999){\color[rgb]{0.30196078,0.30196078,0.30196078}\makebox(0,0)[t]{\lineheight{1.25}\smash{\begin{tabular}[t]{c}1.00\end{tabular}}}}%
    \put(0.5303818,0.34427272){\makebox(0,0)[t]{\lineheight{1.25}\smash{\begin{tabular}[t]{c}$\lambda$\end{tabular}}}}%
    \put(0.98007269,0.02378182){\makebox(0,0)[rt]{\lineheight{1.25}\smash{\begin{tabular}[t]{r}$\rho = 0.99(\tau \approx 0.03)$\end{tabular}}}}%
    \put(0,0){\includegraphics[width=\unitlength,page=4]{plot_visual_inspection_svg-tex.pdf}}%
    \put(0.39914545,0.4018){\makebox(0,0)[lt]{\lineheight{1.25}\smash{\begin{tabular}[t]{l}$n$\end{tabular}}}}%
    \put(0,0){\includegraphics[width=\unitlength,page=5]{plot_visual_inspection_svg-tex.pdf}}%
    \put(0.48572726,0.40323636){\makebox(0,0)[lt]{\lineheight{1.25}\smash{\begin{tabular}[t]{l}25\end{tabular}}}}%
    \put(0.55536366,0.40323636){\makebox(0,0)[lt]{\lineheight{1.25}\smash{\begin{tabular}[t]{l}100\end{tabular}}}}%
    \put(0.63416365,0.40323636){\makebox(0,0)[lt]{\lineheight{1.25}\smash{\begin{tabular}[t]{l}400\end{tabular}}}}%
  \end{picture}%
  \endgroup%

  \caption{An illustration of the visual inspection method described in Remark \ref{rem:visual_inspection} to determine whether $\tau > 0$. The plots show the sample paths of the process on the left-hand side of Eq.\@ \eqref{eq:visual_inspection} under model \eqref{eq:var_model} for two different values of $\rho$. The two panels are examples of cases where $\tau > 0$ (left panel) and where $\tau \approx 0$ (right panel), and even at small sample sizes these two cases can clearly be distinguished. The values of $\tau$ given in the captions of the plots were obtained by Monte Carlo simulations.}
  \label{fig:visual_inspection}
\end{figure}

\begin{remark}
  \label{rem:visual_inspection}
  {\rm
    A crucial step in our approach is the choice of the threshold $\Delta$ defining the relevant hypotheses in \eqref{det2}. This choice  raises  the  question when a  distance covariance or correlation is  considered as {\it practically significant}, and the answer depends sensitively on the particular problem under consideration. For the commonly used dependence measures, this discussion  has a long history in  applied statistics and
    is  related to the specification of the effect size \citep[see, for example,][]{cohen1988}, which is often used  to obtain a better interpretation of $p$-values for comparing sample means.
    Several authors argue that such a definition should be done on a finer scale and   transfer this concept to classify for their studies the strength of the association in (two or) three categories ``small'' ($\Delta  \leq  d_\star \leq \Delta_1$), ``medium'' ($\Delta_1 <  d_\star \leq \Delta_2$) or ``large''  ($\Delta_2 <  d_\star \leq 1$), where
    $d_\star$  denotes a dependence measure taking values in the interval $[0,1]$. For example using $\Delta_2$ and $d_\star = \mathrm{dcor}(X,Y) $
    means that one is testing for a ``large''  association between $X$ and $Y$ measured with respect to the distance correlation. As pointed out before,
    the exact definition of the classes varies between different disciplines and the considered measure. Exemplarily, we refer to the recent work of \cite{Tsaparas2006}, \cite{Brydges2019} and \cite{Lovakov2021}
    who give recommendations for Pearson's correlation for studies in gerontology research and social psychology, and  to  \cite{Huang2022}, who investigate this matter for  Spearman's $\rho$ in the context of cancer mortality. Other works
    discussing these issues from different perspectives are
    \cite{Hemphill2003,Bosco2015} and \cite{Quintana2016}, and a common aspect in all these references consists in the fact that the authors define a threshold $\Delta$ for their studies, which should be exceeded to consider an association as {\it practically significant}.
    As the distance  correlation is a rather new dependence measure such thresholds have to be developed in the future for different applications. However,  for situations where such a choice is difficult,  we discuss two alternatives. \\
    \begin{itemize}
      \item[(a)] The hypotheses in \eqref{det2} are nested. By definition it is clear that  a
            rejection of $H_0$ in \eqref{det2}
            by the test \eqref{det5}
            for $\Delta= \Delta_0$ also yields rejection of $H_{0}$
            for all  $\Delta < \Delta_0$.
            By the sequential rejection principle, we may simultaneously test the  hypotheses  in \eqref{det2} for different $\Delta \geq 0$
            starting at $\Delta  = 0$ and
            increasing  $\Delta $ to
            find the minimum value of $\Delta $, say
            \begin{align*}
              \hat \Delta_\alpha
               & :=\min \big \{\Delta \ge 0 \,|  \,
              { \mathrm{dcor}_n(X,Y) } \leq   \Delta + w_{1-\alpha} {V_{n,\mathrm{dcor}}}     \big  \} \\
               & = \max \big \{ 0,
              { \mathrm{dcor}_n(X,Y) } -  w_{1-\alpha} {V_{n,\mathrm{dcor}}}
              \big \}
            \end{align*}
            for which
            $H_0$  in \eqref{det2} is not rejected.
            In particular,  as the null hypothesis
            is accepted  for all  thresholds
            $ \Delta \geq  \hat \Delta_\alpha $ and rejected for
            $ \Delta <   \hat \Delta_\alpha $, the quantity  $\hat \Delta_\alpha $ could be interpreted as a measure of evidence against the null hypothes in \eqref{det2}, i.e.  the distance correlation is less or equal than $\Delta$,    at a controlled type I error. \\
      \item[(b)] Moreover, if $\mathrm{dcor}(X,Y)>0$
            it is easy to see from \eqref{det8} that the interval
            \begin{equation}
              \label{eq:confidence_interval}
              I_n := \big [ \mathrm{dcor}_n(X,Y)  +  w_{\alpha/2} {V_{n,\mathrm{dcor}}}, ~\mathrm{dcor}_n(X,Y) +  w_{1-\alpha/2} {V_{n,\mathrm{dcor}}} \big ]
            \end{equation}
            defines an asymptotic $(1-\alpha)$-confidence interval for $\mathrm{dcor}(X,Y)$. We investigate the finite sample performance of these confidence intervals in Section \ref{sec:finite_sample}. To the best of our knowledge, this is the first non-resampling based method to obtain confidence intervals for the distance correlation (or distance covariance), and it may be of interest even for i.i.d.\@ data due to its low computational costs.
    \end{itemize}
  }
\end{remark}

\begin{remark} \label{remboot}
  \rm
  \begin{itemize}
    \item[(a)]
          In finite sample studies, which are not displayed for the sake of brevity, it can be observed that the test
          \eqref{det5} is not very sensitive with respect to the choice of the  measure $\gamma$ used in the normalizing statistic $V_{n,\mathrm{dcor}}$ in \eqref{hol100},  and we
          give a heuristic argument for this observation.
          Let us make the dependence of the denominator and the quantile of the random variable ${W}$ explicit by introducing the notation
          $$
            {V} (\gamma) = \left\{ \int_0^1 \lambda^2[{B} (\lambda) - \lambda {B} (1) ]^2  \,\mathrm{d}\gamma (\lambda) \right\}^{1/2}
          $$
          and $w_{1- \alpha} ({B}(1) / {V} (\gamma))$.
          Using these notations and Theorem \ref{thm:schwache_konvergenz_dcor}, the probability of rejection by the test \eqref{det5} can be approximated by
          \begin{align*}
             & \mathbb{P}   \left(
            \mathrm{dcor}_n(X,Y)  > \Delta  + w_{1-\alpha} V_{n,\mathrm{dcor}}  \right)
            \\
             & \quad \approx \mathbb{P}   \left(
            B (1) > {\sqrt{n} (\Delta - \mathrm{dcor}(X,Y)) \over \tau }  +
            w_{1-\alpha} \Big ({ {B} (1) \over {V} (\gamma) } \Big ) {V} (\gamma) \right).
          \end{align*}
          The right hand side is not very sensitive with respect to the measure $\gamma$, as for a fixed constant $c\in\mathbb R$, it is true that $c w_{1-\alpha} (B(1) / c) = w_{1-\alpha} (B(1))$.
    \item[(b)]   The application of resampling procedures in the context of testing relevant hypotheses is a non-trivial problem. To be precise, consider the bootstrap, and note that for its application one has to generate data under the {\bf composite} null hypothesis \eqref{det2}. Thus one has to find a distribution for $(X,Y)$ such
          $\mathrm{dcor}(X,Y)  \leq \Delta$ , which is a non-trivial task. A simple choice is to generate data under the independence assumption, but such a test would be extremely conservative with low power. In order to improve the power one would have to generate data at the `boundary  of the hypotheses', that is $\mathrm{dcor}(X,Y)  = \Delta $, but this problem seems to be intrinsically difficult. This difficulty is not a particular feature of the distance correlation, but applies to any dependence measure. Thus self-normalization, as developed here, is a useful and computationally efficient  method to address these problems.
  \end{itemize}

\end{remark}
\begin{remark}       ~~~
  {\rm \begin{enumerate}[(a)]
    \item By a similar argument to Proposition \ref{cor:test_dcov} we can construct a consistent asymptotic level $\alpha$
          test for the hypotheses
          $$
            H_0 : \mathrm{dcov}(X,Y) \leq \Delta \quad \textrm{vs.} \quad H_1 : \mathrm{dcov}(X,Y) > \Delta,
          $$
          which rejects the null hypothesis whenever $\mathrm{dcov}_n(X,Y) >  \Delta  + w_{1-\alpha} V_{n,\mathrm{dcov}}$. Since the limiting variance in Theorem \ref{thm:schwache_konvergenz_dcov} is always strictly greater than $0$, no additional assumptions are needed for this test.
    \item  Closely related to this decision problem are the hypotheses
          \begin{align}
            \label{det3}
            H_0^{\rm sim}:
            \mathrm{dcor}(X,Y)  \geq \Delta
            ~~ \text{versus} ~~ H_
            1^{\rm sim}:  \mathrm{dcor}(X,Y)  <  \Delta ~
          \end{align}
          (here the upper index `sim' indicates that a rejection of $H_0$ in \eqref{det3} means that the association is `similar' to independence.)
          Hypotheses of this type have found considerable attention in the biostatistics literature, where  the problem  is referred to as bioequivalence testing
          \citep[see, for example, the monograph of][]{wellek2010testing}.
          In the present context  rejecting \eqref{det3} allows to work under approximate independence at a controlled type I error.
          A test for these  hypotheses  can easily be developed in the same spirit. In this case the null hypothesis in $H_0^{\rm sim}$ is rejected, whenever
          \begin{align}
            \label{det33}
            \frac{ \mathrm{dcor}_n(X,Y) - \Delta}{V_{n,\mathrm{dcor}}} < w_{\alpha},
          \end{align}
          and this test has asymptotic level $\alpha$ and is consistent, that is.
          $$
            \lim_{n \to \infty} \mathbb{P}\left(\frac{ \mathrm{dcor}_n(X,Y) - \Delta}{V_{n,\mathrm{dcor}}} < w_{\alpha}\right) = \begin{cases}0 \qquad &\textrm{if } \mathrm{dcor}(X,Y) >  \Delta, \\ \alpha \qquad &\textrm{if } \mathrm{dcor}(X,Y) = \Delta, \\ 1 \qquad &\textrm{if } \mathrm{dcor}(X,Y) < \Delta.\end{cases}
          $$

          Moreover,  we can also determine a minimal value $\hat \Delta_{\alpha ,{\rm sim}}$   such that $H_0^{\rm sim}$ is rejected for all $\Delta > \hat \Delta_{\alpha ,{\rm sim}}$  at a controlled type I error $\alpha$.
  \end{enumerate}
  }
\end{remark}

\section{Behaviour Under Independence}

The results in Section \ref{sec:mains} all assume that $\mathrm{dcov}(X,Y) > 0$, or equivalently, that $X$ and $Y$ are not independent. On a conceptual level, we motivated this by the observation that nothing in the real world is ever truly independent. Nevertheless, the behaviour of our self-normalized statistic under perfect independence of $X$ and $Y$ is an interesting problem. In this case, Theorem \ref{thm:dcov_prozesskonvergenz} tells us that the first row and column of the covariance matrix $\Gamma$ and therefore also its square root $\Gamma^{1/2}$ consist only of $0$-entries. Next, the limiting process in Corollary \ref{cor:dcor_processkonvergenz} is essentially determined by the factor
$$
  \langle \nu, \Gamma^{1/2}W\rangle = \langle \Gamma^{1/2} \nu, W\rangle,
$$
where $\nu = (\nu_1, \nu_2, \nu_3)^\top$ is defined in the statement of that corollary. Importantly, under independence of $X$ and $Y$, it holds that $\nu_2 = \nu_3 = 0$ since then $\mathrm{dcov}(X,Y) = 0$. But since the first row and column of $\Gamma^{1/2}$ only consist of zeroes, this means that $\Gamma^{1/2} \nu = (0,0,0)^\top$, and so the limiting process in Corollary \ref{cor:dcor_processkonvergenz} is $0$ almost surely. This implies that the limiting variance $\tau$ in Theorem \ref{thm:schwache_konvergenz_dcor} is $0$. While this means that our self-normalizing procedure cannot be used if $X$ and $Y$ are perfectly independent, it also means that the visual inspection method from Remark \ref{rem:visual_inspection} can be used to detect whether this is the case. Therefore, even if a practitioner does not share our sentiment that perfect independence never occurs in practice, they have a practical tool to check if our self-normalization method is applicable.

Can our process convergence results be recovered under perfect independence? Perhaps a different scaling would lead to a non-degenerate covariance matrix $\Gamma$, allowing us to reconstruct analogues of Theorems \ref{thm:schwache_konvergenz_dcov} and \ref{thm:schwache_konvergenz_dcor} under independence. Unfortunately, while convergence to a non-degenerate limit can be obtained through a different scaling, the form of the limit is such that this cannot be used to construct pivotal test statistics. This is illustrated by the following result.

\begin{theorem}
  \label{thm:independence_weak_convergence}
  Define the processes $Q_n \in \ell^\infty[0,1]$, $n \in \mathbb{N}$, by
  $$
    Q_n(\lambda) = \left(\frac{\lfloor n\lambda\rfloor}{n}\right)^2 \mathrm{dcov}_{\lfloor n\lambda \rfloor}(X,Y).
  $$
  Suppose that $X$ and $Y$ are independent and have finite $(4+\varepsilon)$-moments for some $\varepsilon > 0$. Assume further that the sample generating process $(X_k,Y_k)_{k \in \mathbb{N}}$ is strictly stationary and absolutely regular with mixing rate $\beta(n) = \mathcal{O}\left(n^{-r}\right)$ for some $r > 6 + 24/\varepsilon$. Then it holds that
  \begin{equation}
    \label{eq:independence_limit}
    n Q_n \rightsquigarrow 15 \sum_{k=1}^\infty \mu_k W_k^2
  \end{equation}
  in $\ell^\infty[0,1]$, where $(\mu_k)_{k \in \mathbb{N}}$ is a sequence of non-negative numbers defined below, and $(W_k)_{k \in \mathbb{N}}$ is a sequence of centred Gaussian processes with
  \begin{equation}
    \label{eq:independence_covariance}
    \mathrm{Cov}\left[W_i(\lambda_1), W_j(\lambda_2)\right] = (\lambda_1 \land \lambda_2) \lim_{n \to \infty} \frac{1}{n} \sum_{s,t=1}^n \mathrm{Cov}\left[\varphi_i(Z_s), \varphi_j(Z_t)\right],
  \end{equation}
  where we use the notation $Z_s = (X_s, Y_s)$ for $s = 1, \ldots, n$. The objects $(\mu_k, \varphi_k)$ from Eqs.\@ \eqref{eq:independence_limit} and \eqref{eq:independence_covariance} are pairs of non-negative eigenvalues and matching eigenfunctions of the integral operator
  \begin{align*}
    T : L_2\left(\mathbb{P}^{X,Y}\right) & \to L_2\left(\mathbb{P}^{X,Y}\right),                                                                              \\
    f                                    & \mapsto \left[z \mapsto \int h_2\left(z, z'; \mathbb{P}^{X,Y}\right) f(z') ~\mathrm{d}\mathbb{P}^{X,Y}(z')\right],
  \end{align*}
  where, writing $z = (x,y) \in \mathcal{X} \times \mathcal{Y}$ and $z' = (x', y')\in \mathcal{X} \times \mathcal{Y}$,
  \begin{align}
    \begin{split}
      \label{eq:identity_h2_delta_theta}
      h_2\left(z, z'; \mathbb{P}^{X,Y}\right) & =\frac{1}{15}\left[ d_\mathcal{X}(x,x') - \int d_\mathcal{X}(x,x_0) ~\mathrm{d}\mathbb{P}^X(x_0) - \int d_\mathcal{X}(x',x_0) ~\mathrm{d}\mathbb{P}^X(x_0)\right.   \\
                                              & \phantom{=\frac{1}{15}\Big[ d_\mathcal{X}(x,x')\Big]}\left.+ \int d_\mathcal{X}(x_0,x_1) ~\mathrm{d}\left(\mathbb{P}^X \otimes \mathbb{P}^X\right) (x_0,x_1)\right] \\
                                              & \quad\cdot \left[ d_\mathcal{Y}(y,y') - \int d_\mathcal{Y}(y,y_0) ~\mathrm{d}\mathbb{P}^Y(y_0) - \int d_\mathcal{Y}(y',y_0) ~\mathrm{d}\mathbb{P}^Y(y_0)\right.     \\
                                              & \phantom{\quad\cdot \Big[ d_\mathcal{Y}(y,y')\Big]}\left.+ \int d_\mathcal{Y}(y_0,y_1) ~\mathrm{d}\left(\mathbb{P}^Y \otimes \mathbb{P}^Y\right) (y_0,y_1)\right].
    \end{split}
  \end{align}
\end{theorem}

While Theorem \ref{thm:independence_weak_convergence} only makes a statement about the process convergence of the sequential empirical distance covariance $\mathrm{dcov}_{\lfloor n\lambda\rfloor}(X,Y)$, a joint convergence result similar to Theorem \ref{thm:dcov_prozesskonvergenz}, which includes the sequential distance variances $\mathrm{dcov}_{\lfloor n\lambda\rfloor}(X,X)$ and $\mathrm{dcov}_{\lfloor n\lambda\rfloor}(Y,Y)$, could also be derived. However, this would be useless for the purposes of pivotal inference for the distance correlation, as results similar to Theorems \ref{thm:schwache_konvergenz_dcov} and \ref{thm:schwache_konvergenz_dcor} are impossible if $X$ and $Y$ are independent. The reason for this is the form of the limiting process in Eq.\@ \eqref{eq:independence_limit}, which is not a linear transformation of some Gaussian process. It depends on the sample generating process in a way which is more intricate than in the dependent case, and which makes a `canceling out' trick as in Eq.\@ \eqref{det8} impossible. Therefore, while we could in theory use Theorem \ref{thm:independence_weak_convergence} to construct test statistics similar to those in Section \ref{sec:mains}, their limiting distributions would not be pivotal. The problem ultimately lies in the fact that the empirical distance covariance is a degenerate V-statistic under perfect independence.

\section{Finite Sample Properties}
\label{sec:finite_sample}

In this section we investigate the performance of the proposed test in a variety of  models. As the reference measure $\gamma$ in the definition of the test statistic $V_{n,\mathrm{dcor}}$ we use the discrete uniform distribution concentrated on the points $j/20$, $j = 1, \ldots, 19$. The quantiles of the corresponding random variable $W$ defined in \eqref{eq:definition_W} are approximated by their empirical analogues based on $10^6$ independently sampled realisations of $W$. These quantiles are given in Table \ref{tab:quantile}. To determine the empirical rejection rates and empirical covering rates given below, we first generate data $(X_k,Y_k)$, $k = 1, \ldots, n$ according to different data generating models. Each of these models features a tuning parameter $\rho \in [0,1]$, which for the purpose of our simulation study we vary through the grid $0.00, 0.01, \ldots, 0.99$. For every different combination of data generating model, tuning parameter $\rho$ and sample size $n$, we generate $1000$ independent simulation runs from which we calculate the empirical rejection rate and empirical covering rate given below. Since the role played by the parameter $\rho$ varies between data generating models, we do not give the values of $\rho$ directly, but instead replace them by approximations of the corresponding population values of $\mathrm{dcor}(X,Y)$. In the following plots, these approximations are plotted on the $x$-axes under the label `Distance Correlation'; they were obtained by averaging $100$ independent simulations of $\mathrm{dcor}_n(X,Y)$ with $n = 1000$ for each specific combination of data generating model and tuning parameter $\rho$ \citep[$\mathrm{dcor}_n(X,Y)$ is a strongly consistent estimator of $\mathrm{dcor}(X,Y)$ as a consequence of Theorem 1 in][]{kroll:2022}. In all instances, the hypothesis tests were performed at a nominal level of $\alpha = 5\%$, and the confidence intervals were constructed at a nominal covering rate of $95\%$.

\begin{table}[b]
  \centering
  \begin{tabular}{r|c c c}
    $1-\alpha$     & $90 \%$ & 9$5 \%$ & $99 \%$ \\\hline
    $w_{1-\alpha}$ & $7.13$  & $9.89$  & $16.40$
  \end{tabular}
  \caption{\it Empirical $(1-\alpha)$-quantiles $w_{1-\alpha}$ (based on $10^6$ independent realisations) of the distribution of the random variable $W$ defined in \eqref{eq:definition_W} with $\gamma$ chosen as the discrete uniform distribution concentrated on the points  $j/20$, $j = 1, \ldots, 19$. The corresponding $\alpha$-quantiles are obtained by the symmetry of of the distribution of $W$, that is $w_{\alpha}= - w_{1-\alpha}$.}
  \label{tab:quantile}
\end{table}

\begin{figure}[t]
  \fontsize{8}{10}\selectfont

  \begingroup%
  \makeatletter%
  \providecommand\color[2][]{%
    \errmessage{(Inkscape) Color is used for the text in Inkscape, but the package 'color.sty' is not loaded}%
    \renewcommand\color[2][]{}%
  }%
  \providecommand\transparent[1]{%
    \errmessage{(Inkscape) Transparency is used (non-zero) for the text in Inkscape, but the package 'transparent.sty' is not loaded}%
    \renewcommand\transparent[1]{}%
  }%
  \providecommand\rotatebox[2]{#2}%
  \newcommand*\fsize{\dimexpr\f@size pt\relax}%
  \newcommand*\lineheight[1]{\fontsize{\fsize}{#1\fsize}\selectfont}%
  \ifx\svgwidth\undefined%
    \setlength{\unitlength}{\textwidth}%
    \ifx\svgscale\undefined%
      \relax%
    \else%
      \setlength{\unitlength}{\unitlength * \real{\svgscale}}%
    \fi%
  \else%
    \setlength{\unitlength}{\svgwidth}%
  \fi%
  \global\let\svgwidth\undefined%
  \global\let\svgscale\undefined%
  \makeatother%
  \begin{picture}(1,0.69565217)%
    \lineheight{1}%
    \setlength\tabcolsep{0pt}%
    \put(0,0){\includegraphics[width=\unitlength,page=1]{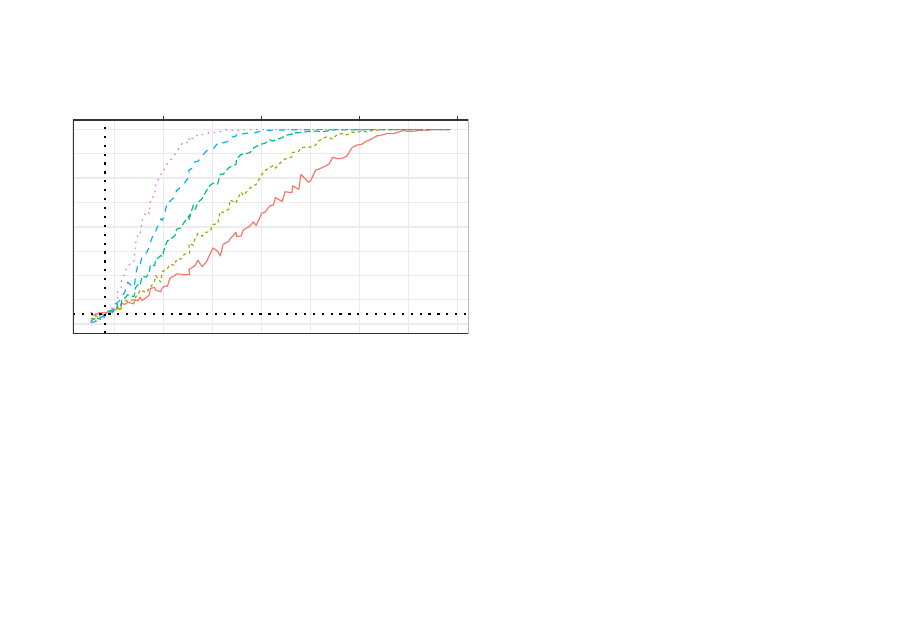}}%
    \put(0.18215652,0.5707826){\color[rgb]{0.30196078,0.30196078,0.30196078}\makebox(0,0)[t]{\lineheight{1.25}\smash{\begin{tabular}[t]{c}0.25\end{tabular}}}}%
    \put(0.29116521,0.5707826){\color[rgb]{0.30196078,0.30196078,0.30196078}\makebox(0,0)[t]{\lineheight{1.25}\smash{\begin{tabular}[t]{c}0.50\end{tabular}}}}%
    \put(0.40017392,0.5707826){\color[rgb]{0.30196078,0.30196078,0.30196078}\makebox(0,0)[t]{\lineheight{1.25}\smash{\begin{tabular}[t]{c}0.75\end{tabular}}}}%
    \put(0.5091652,0.5707826){\color[rgb]{0.30196078,0.30196078,0.30196078}\makebox(0,0)[t]{\lineheight{1.25}\smash{\begin{tabular}[t]{c}1.00\end{tabular}}}}%
    \put(0.07243478,0.32965218){\color[rgb]{0.30196078,0.30196078,0.30196078}\makebox(0,0)[rt]{\lineheight{1.25}\smash{\begin{tabular}[t]{r}0.00\end{tabular}}}}%
    \put(0.07243478,0.38372174){\color[rgb]{0.30196078,0.30196078,0.30196078}\makebox(0,0)[rt]{\lineheight{1.25}\smash{\begin{tabular}[t]{r}0.25\end{tabular}}}}%
    \put(0.07243478,0.4377913){\color[rgb]{0.30196078,0.30196078,0.30196078}\makebox(0,0)[rt]{\lineheight{1.25}\smash{\begin{tabular}[t]{r}0.50\end{tabular}}}}%
    \put(0.07243478,0.49186087){\color[rgb]{0.30196078,0.30196078,0.30196078}\makebox(0,0)[rt]{\lineheight{1.25}\smash{\begin{tabular}[t]{r}0.75\end{tabular}}}}%
    \put(0.07243478,0.54593044){\color[rgb]{0.30196078,0.30196078,0.30196078}\makebox(0,0)[rt]{\lineheight{1.25}\smash{\begin{tabular}[t]{r}1.00\end{tabular}}}}%
    \put(0,0){\includegraphics[width=\unitlength,page=2]{plot_VAR_svg-tex.pdf}}%
    \put(0.52144346,0.30384349){\makebox(0,0)[rt]{\lineheight{1.25}\smash{\begin{tabular}[t]{r}$\Delta = 0.1$\end{tabular}}}}%
    \put(0,0){\includegraphics[width=\unitlength,page=3]{plot_VAR_svg-tex.pdf}}%
    \put(0.64165219,0.5707826){\color[rgb]{0.30196078,0.30196078,0.30196078}\makebox(0,0)[t]{\lineheight{1.25}\smash{\begin{tabular}[t]{c}0.25\end{tabular}}}}%
    \put(0.75066088,0.5707826){\color[rgb]{0.30196078,0.30196078,0.30196078}\makebox(0,0)[t]{\lineheight{1.25}\smash{\begin{tabular}[t]{c}0.50\end{tabular}}}}%
    \put(0.85966956,0.5707826){\color[rgb]{0.30196078,0.30196078,0.30196078}\makebox(0,0)[t]{\lineheight{1.25}\smash{\begin{tabular}[t]{c}0.75\end{tabular}}}}%
    \put(0.96866083,0.5707826){\color[rgb]{0.30196078,0.30196078,0.30196078}\makebox(0,0)[t]{\lineheight{1.25}\smash{\begin{tabular}[t]{c}1.00\end{tabular}}}}%
    \put(0.98093909,0.30384349){\makebox(0,0)[rt]{\lineheight{1.25}\smash{\begin{tabular}[t]{r}$\Delta = 0.2$\end{tabular}}}}%
    \put(0,0){\includegraphics[width=\unitlength,page=4]{plot_VAR_svg-tex.pdf}}%
    \put(0.07243478,0.04855655){\color[rgb]{0.30196078,0.30196078,0.30196078}\makebox(0,0)[rt]{\lineheight{1.25}\smash{\begin{tabular}[t]{r}0.00\end{tabular}}}}%
    \put(0.07243478,0.1026261){\color[rgb]{0.30196078,0.30196078,0.30196078}\makebox(0,0)[rt]{\lineheight{1.25}\smash{\begin{tabular}[t]{r}0.25\end{tabular}}}}%
    \put(0.07243478,0.15669566){\color[rgb]{0.30196078,0.30196078,0.30196078}\makebox(0,0)[rt]{\lineheight{1.25}\smash{\begin{tabular}[t]{r}0.50\end{tabular}}}}%
    \put(0.07243478,0.21076522){\color[rgb]{0.30196078,0.30196078,0.30196078}\makebox(0,0)[rt]{\lineheight{1.25}\smash{\begin{tabular}[t]{r}0.75\end{tabular}}}}%
    \put(0.07243478,0.26483478){\color[rgb]{0.30196078,0.30196078,0.30196078}\makebox(0,0)[rt]{\lineheight{1.25}\smash{\begin{tabular}[t]{r}1.00\end{tabular}}}}%
    \put(0,0){\includegraphics[width=\unitlength,page=5]{plot_VAR_svg-tex.pdf}}%
    \put(0.52144346,0.0227478){\makebox(0,0)[rt]{\lineheight{1.25}\smash{\begin{tabular}[t]{r}$\Delta = 0.3$\end{tabular}}}}%
    \put(0,0){\includegraphics[width=\unitlength,page=6]{plot_VAR_svg-tex.pdf}}%
    \put(0.53097391,0.59017391){\makebox(0,0)[t]{\lineheight{1.25}\smash{\begin{tabular}[t]{c}Distance Correlation\end{tabular}}}}%
    \put(0.03271304,0.30271304){\rotatebox{90}{\makebox(0,0)[t]{\lineheight{1.25}\smash{\begin{tabular}[t]{c}Rejection Rate\end{tabular}}}}}%
    \put(0.98093909,0.0227478){\makebox(0,0)[rt]{\lineheight{1.25}\smash{\begin{tabular}[t]{r}$\Delta = 0.4$\end{tabular}}}}%
    \put(0,0){\includegraphics[width=\unitlength,page=7]{plot_VAR_svg-tex.pdf}}%
    \put(0.33445217,0.6452){\makebox(0,0)[lt]{\lineheight{1.25}\smash{\begin{tabular}[t]{l}$n$\end{tabular}}}}%
    \put(0,0){\includegraphics[width=\unitlength,page=8]{plot_VAR_svg-tex.pdf}}%
    \put(0.41726955,0.64657391){\makebox(0,0)[lt]{\lineheight{1.25}\smash{\begin{tabular}[t]{l}25\end{tabular}}}}%
    \put(0.48387828,0.64657391){\makebox(0,0)[lt]{\lineheight{1.25}\smash{\begin{tabular}[t]{l}50\end{tabular}}}}%
    \put(0.55050436,0.64657391){\makebox(0,0)[lt]{\lineheight{1.25}\smash{\begin{tabular}[t]{l}100\end{tabular}}}}%
    \put(0.62587827,0.64657391){\makebox(0,0)[lt]{\lineheight{1.25}\smash{\begin{tabular}[t]{l}200\end{tabular}}}}%
    \put(0.70123477,0.64657391){\makebox(0,0)[lt]{\lineheight{1.25}\smash{\begin{tabular}[t]{l}400\end{tabular}}}}%
  \end{picture}%
  \endgroup%

  \caption{\it Simulation results for the model \eqref{eq:var_model} with varying parameter $\rho$. The $x$-axes give the corresponding values for $\mathrm{dcor}(X,Y)$. Empirical rejection rates of the test \eqref{det5} for the hypotheses \eqref{det2} for different thresholds $\Delta$ and sample sizes $n$. The dotted lines represent $\Delta$ (vertical) and the nominal level $\alpha = 5\%$ (horizontal).}
  \label{fig:var_model}
\end{figure}

\begin{figure}[t]
  \fontsize{8}{10}\selectfont

  \begingroup%
  \makeatletter%
  \providecommand\color[2][]{%
    \errmessage{(Inkscape) Color is used for the text in Inkscape, but the package 'color.sty' is not loaded}%
    \renewcommand\color[2][]{}%
  }%
  \providecommand\transparent[1]{%
    \errmessage{(Inkscape) Transparency is used (non-zero) for the text in Inkscape, but the package 'transparent.sty' is not loaded}%
    \renewcommand\transparent[1]{}%
  }%
  \providecommand\rotatebox[2]{#2}%
  \newcommand*\fsize{\dimexpr\f@size pt\relax}%
  \newcommand*\lineheight[1]{\fontsize{\fsize}{#1\fsize}\selectfont}%
  \ifx\svgwidth\undefined%
    \setlength{\unitlength}{\textwidth}%
    \ifx\svgscale\undefined%
      \relax%
    \else%
      \setlength{\unitlength}{\unitlength * \real{\svgscale}}%
    \fi%
  \else%
    \setlength{\unitlength}{\svgwidth}%
  \fi%
  \global\let\svgwidth\undefined%
  \global\let\svgscale\undefined%
  \makeatother%
  \begin{picture}(1,0.43478261)%
    \lineheight{1}%
    \setlength\tabcolsep{0pt}%
    \put(0,0){\includegraphics[width=\unitlength,page=1]{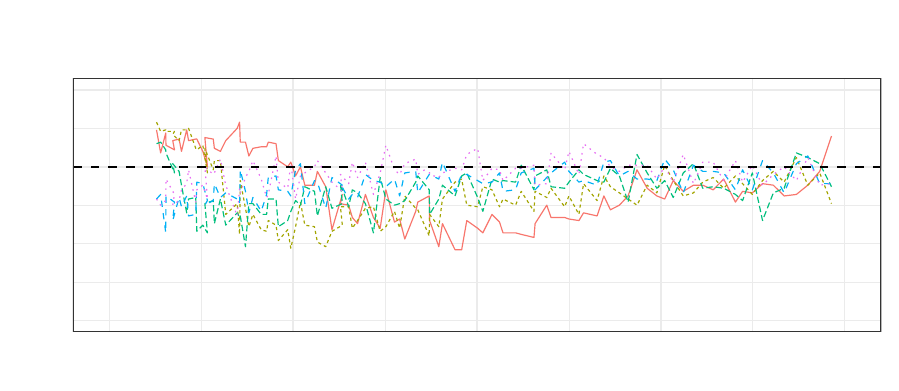}}%
    \put(0.07243478,0.07264349){\color[rgb]{0.30196078,0.30196078,0.30196078}\makebox(0,0)[rt]{\lineheight{1.25}\smash{\begin{tabular}[t]{r}0.85\end{tabular}}}}%
    \put(0.07243478,0.15817391){\color[rgb]{0.30196078,0.30196078,0.30196078}\makebox(0,0)[rt]{\lineheight{1.25}\smash{\begin{tabular}[t]{r}0.90\end{tabular}}}}%
    \put(0.07243478,0.24372174){\color[rgb]{0.30196078,0.30196078,0.30196078}\makebox(0,0)[rt]{\lineheight{1.25}\smash{\begin{tabular}[t]{r}0.95\end{tabular}}}}%
    \put(0.07243478,0.32925217){\color[rgb]{0.30196078,0.30196078,0.30196078}\makebox(0,0)[rt]{\lineheight{1.25}\smash{\begin{tabular}[t]{r}1.00\end{tabular}}}}%
    \put(0,0){\includegraphics[width=\unitlength,page=2]{plot_ci_VAR_svg-tex.pdf}}%
    \put(0.12191304,0.04575652){\color[rgb]{0.30196078,0.30196078,0.30196078}\makebox(0,0)[t]{\lineheight{1.25}\smash{\begin{tabular}[t]{c}0.00\end{tabular}}}}%
    \put(0.32645218,0.04575652){\color[rgb]{0.30196078,0.30196078,0.30196078}\makebox(0,0)[t]{\lineheight{1.25}\smash{\begin{tabular}[t]{c}0.25\end{tabular}}}}%
    \put(0.53097391,0.04575652){\color[rgb]{0.30196078,0.30196078,0.30196078}\makebox(0,0)[t]{\lineheight{1.25}\smash{\begin{tabular}[t]{c}0.50\end{tabular}}}}%
    \put(0.73551307,0.04575652){\color[rgb]{0.30196078,0.30196078,0.30196078}\makebox(0,0)[t]{\lineheight{1.25}\smash{\begin{tabular}[t]{c}0.75\end{tabular}}}}%
    \put(0.94003481,0.04575652){\color[rgb]{0.30196078,0.30196078,0.30196078}\makebox(0,0)[t]{\lineheight{1.25}\smash{\begin{tabular}[t]{c}1.00\end{tabular}}}}%
    \put(0.53097391,0.02365218){\makebox(0,0)[t]{\lineheight{1.25}\smash{\begin{tabular}[t]{c}Distance Correlation\end{tabular}}}}%
    \put(0.03271304,0.20641739){\rotatebox{90}{\makebox(0,0)[t]{\lineheight{1.25}\smash{\begin{tabular}[t]{c}Covering Rate\end{tabular}}}}}%
    \put(0,0){\includegraphics[width=\unitlength,page=3]{plot_ci_VAR_svg-tex.pdf}}%
    \put(0.33445217,0.38433043){\makebox(0,0)[lt]{\lineheight{1.25}\smash{\begin{tabular}[t]{l}$n$\end{tabular}}}}%
    \put(0,0){\includegraphics[width=\unitlength,page=4]{plot_ci_VAR_svg-tex.pdf}}%
    \put(0.41726955,0.38570435){\makebox(0,0)[lt]{\lineheight{1.25}\smash{\begin{tabular}[t]{l}25\end{tabular}}}}%
    \put(0.48387828,0.38570435){\makebox(0,0)[lt]{\lineheight{1.25}\smash{\begin{tabular}[t]{l}50\end{tabular}}}}%
    \put(0.55050436,0.38570435){\makebox(0,0)[lt]{\lineheight{1.25}\smash{\begin{tabular}[t]{l}100\end{tabular}}}}%
    \put(0.62587827,0.38570435){\makebox(0,0)[lt]{\lineheight{1.25}\smash{\begin{tabular}[t]{l}200\end{tabular}}}}%
    \put(0.70123477,0.38570435){\makebox(0,0)[lt]{\lineheight{1.25}\smash{\begin{tabular}[t]{l}400\end{tabular}}}}%
  \end{picture}%
  \endgroup%

  \caption{\it Empirical covering rates of the confidence interval \eqref{eq:confidence_interval} with $\alpha = 0.95$ and data generated according to Eq.\@ \eqref{eq:var_model}. The dashed line represents the nominal level of $95 \%$.}
  \label{fig:ci_VAR}
\end{figure}

\begin{figure}[t]
  \fontsize{8}{10}\selectfont

  \begingroup%
  \makeatletter%
  \providecommand\color[2][]{%
    \errmessage{(Inkscape) Color is used for the text in Inkscape, but the package 'color.sty' is not loaded}%
    \renewcommand\color[2][]{}%
  }%
  \providecommand\transparent[1]{%
    \errmessage{(Inkscape) Transparency is used (non-zero) for the text in Inkscape, but the package 'transparent.sty' is not loaded}%
    \renewcommand\transparent[1]{}%
  }%
  \providecommand\rotatebox[2]{#2}%
  \newcommand*\fsize{\dimexpr\f@size pt\relax}%
  \newcommand*\lineheight[1]{\fontsize{\fsize}{#1\fsize}\selectfont}%
  \ifx\svgwidth\undefined%
    \setlength{\unitlength}{\textwidth}%
    \ifx\svgscale\undefined%
      \relax%
    \else%
      \setlength{\unitlength}{\unitlength * \real{\svgscale}}%
    \fi%
  \else%
    \setlength{\unitlength}{\svgwidth}%
  \fi%
  \global\let\svgwidth\undefined%
  \global\let\svgscale\undefined%
  \makeatother%
  \begin{picture}(1,0.69565217)%
    \lineheight{1}%
    \setlength\tabcolsep{0pt}%
    \put(0,0){\includegraphics[width=\unitlength,page=1]{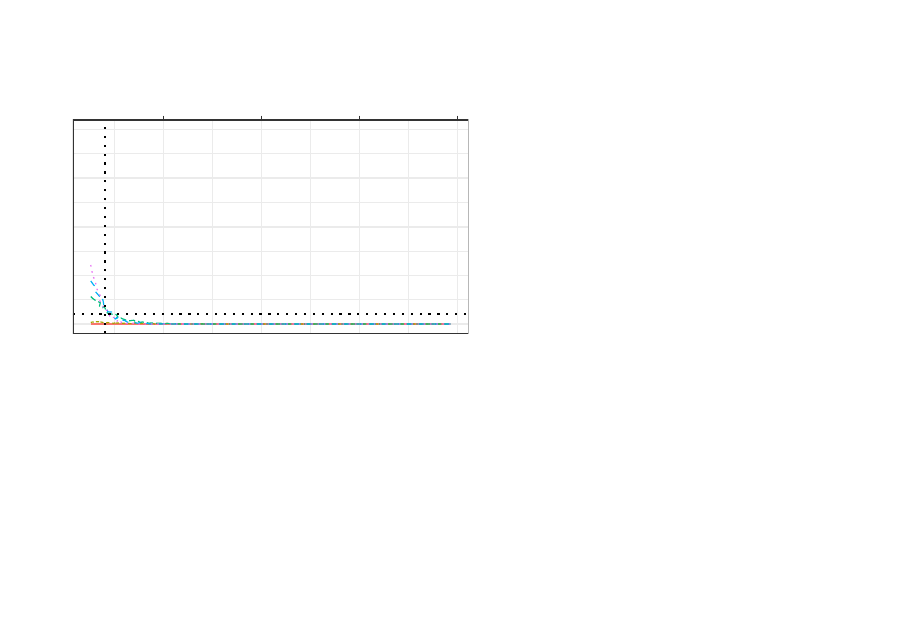}}%
    \put(0.18215652,0.5707826){\color[rgb]{0.30196078,0.30196078,0.30196078}\makebox(0,0)[t]{\lineheight{1.25}\smash{\begin{tabular}[t]{c}0.25\end{tabular}}}}%
    \put(0.29116521,0.5707826){\color[rgb]{0.30196078,0.30196078,0.30196078}\makebox(0,0)[t]{\lineheight{1.25}\smash{\begin{tabular}[t]{c}0.50\end{tabular}}}}%
    \put(0.40017392,0.5707826){\color[rgb]{0.30196078,0.30196078,0.30196078}\makebox(0,0)[t]{\lineheight{1.25}\smash{\begin{tabular}[t]{c}0.75\end{tabular}}}}%
    \put(0.5091652,0.5707826){\color[rgb]{0.30196078,0.30196078,0.30196078}\makebox(0,0)[t]{\lineheight{1.25}\smash{\begin{tabular}[t]{c}1.00\end{tabular}}}}%
    \put(0.07243478,0.32965218){\color[rgb]{0.30196078,0.30196078,0.30196078}\makebox(0,0)[rt]{\lineheight{1.25}\smash{\begin{tabular}[t]{r}0.00\end{tabular}}}}%
    \put(0.07243478,0.38372174){\color[rgb]{0.30196078,0.30196078,0.30196078}\makebox(0,0)[rt]{\lineheight{1.25}\smash{\begin{tabular}[t]{r}0.25\end{tabular}}}}%
    \put(0.07243478,0.4377913){\color[rgb]{0.30196078,0.30196078,0.30196078}\makebox(0,0)[rt]{\lineheight{1.25}\smash{\begin{tabular}[t]{r}0.50\end{tabular}}}}%
    \put(0.07243478,0.49186087){\color[rgb]{0.30196078,0.30196078,0.30196078}\makebox(0,0)[rt]{\lineheight{1.25}\smash{\begin{tabular}[t]{r}0.75\end{tabular}}}}%
    \put(0.07243478,0.54593044){\color[rgb]{0.30196078,0.30196078,0.30196078}\makebox(0,0)[rt]{\lineheight{1.25}\smash{\begin{tabular}[t]{r}1.00\end{tabular}}}}%
    \put(0,0){\includegraphics[width=\unitlength,page=2]{plot_VAR_switched_svg-tex.pdf}}%
    \put(0.52144346,0.30384349){\makebox(0,0)[rt]{\lineheight{1.25}\smash{\begin{tabular}[t]{r}$\Delta = 0.1$\end{tabular}}}}%
    \put(0,0){\includegraphics[width=\unitlength,page=3]{plot_VAR_switched_svg-tex.pdf}}%
    \put(0.64165219,0.5707826){\color[rgb]{0.30196078,0.30196078,0.30196078}\makebox(0,0)[t]{\lineheight{1.25}\smash{\begin{tabular}[t]{c}0.25\end{tabular}}}}%
    \put(0.75066088,0.5707826){\color[rgb]{0.30196078,0.30196078,0.30196078}\makebox(0,0)[t]{\lineheight{1.25}\smash{\begin{tabular}[t]{c}0.50\end{tabular}}}}%
    \put(0.85966956,0.5707826){\color[rgb]{0.30196078,0.30196078,0.30196078}\makebox(0,0)[t]{\lineheight{1.25}\smash{\begin{tabular}[t]{c}0.75\end{tabular}}}}%
    \put(0.96866083,0.5707826){\color[rgb]{0.30196078,0.30196078,0.30196078}\makebox(0,0)[t]{\lineheight{1.25}\smash{\begin{tabular}[t]{c}1.00\end{tabular}}}}%
    \put(0.98093909,0.30384349){\makebox(0,0)[rt]{\lineheight{1.25}\smash{\begin{tabular}[t]{r}$\Delta = 0.2$\end{tabular}}}}%
    \put(0,0){\includegraphics[width=\unitlength,page=4]{plot_VAR_switched_svg-tex.pdf}}%
    \put(0.07243478,0.04855655){\color[rgb]{0.30196078,0.30196078,0.30196078}\makebox(0,0)[rt]{\lineheight{1.25}\smash{\begin{tabular}[t]{r}0.00\end{tabular}}}}%
    \put(0.07243478,0.1026261){\color[rgb]{0.30196078,0.30196078,0.30196078}\makebox(0,0)[rt]{\lineheight{1.25}\smash{\begin{tabular}[t]{r}0.25\end{tabular}}}}%
    \put(0.07243478,0.15669566){\color[rgb]{0.30196078,0.30196078,0.30196078}\makebox(0,0)[rt]{\lineheight{1.25}\smash{\begin{tabular}[t]{r}0.50\end{tabular}}}}%
    \put(0.07243478,0.21076522){\color[rgb]{0.30196078,0.30196078,0.30196078}\makebox(0,0)[rt]{\lineheight{1.25}\smash{\begin{tabular}[t]{r}0.75\end{tabular}}}}%
    \put(0.07243478,0.26483478){\color[rgb]{0.30196078,0.30196078,0.30196078}\makebox(0,0)[rt]{\lineheight{1.25}\smash{\begin{tabular}[t]{r}1.00\end{tabular}}}}%
    \put(0,0){\includegraphics[width=\unitlength,page=5]{plot_VAR_switched_svg-tex.pdf}}%
    \put(0.52144346,0.0227478){\makebox(0,0)[rt]{\lineheight{1.25}\smash{\begin{tabular}[t]{r}$\Delta = 0.3$\end{tabular}}}}%
    \put(0,0){\includegraphics[width=\unitlength,page=6]{plot_VAR_switched_svg-tex.pdf}}%
    \put(0.53097391,0.59017391){\makebox(0,0)[t]{\lineheight{1.25}\smash{\begin{tabular}[t]{c}Distance Correlation\end{tabular}}}}%
    \put(0.03271304,0.30271304){\rotatebox{90}{\makebox(0,0)[t]{\lineheight{1.25}\smash{\begin{tabular}[t]{c}Rejection Rate\end{tabular}}}}}%
    \put(0.98093909,0.0227478){\makebox(0,0)[rt]{\lineheight{1.25}\smash{\begin{tabular}[t]{r}$\Delta = 0.4$\end{tabular}}}}%
    \put(0,0){\includegraphics[width=\unitlength,page=7]{plot_VAR_switched_svg-tex.pdf}}%
    \put(0.33445217,0.6452){\makebox(0,0)[lt]{\lineheight{1.25}\smash{\begin{tabular}[t]{l}$n$\end{tabular}}}}%
    \put(0,0){\includegraphics[width=\unitlength,page=8]{plot_VAR_switched_svg-tex.pdf}}%
    \put(0.41726955,0.64657391){\makebox(0,0)[lt]{\lineheight{1.25}\smash{\begin{tabular}[t]{l}25\end{tabular}}}}%
    \put(0.48387828,0.64657391){\makebox(0,0)[lt]{\lineheight{1.25}\smash{\begin{tabular}[t]{l}50\end{tabular}}}}%
    \put(0.55050436,0.64657391){\makebox(0,0)[lt]{\lineheight{1.25}\smash{\begin{tabular}[t]{l}100\end{tabular}}}}%
    \put(0.62587827,0.64657391){\makebox(0,0)[lt]{\lineheight{1.25}\smash{\begin{tabular}[t]{l}200\end{tabular}}}}%
    \put(0.70123477,0.64657391){\makebox(0,0)[lt]{\lineheight{1.25}\smash{\begin{tabular}[t]{l}400\end{tabular}}}}%
  \end{picture}%
  \endgroup%

  \caption{\it Empirical rejection rates of the test \eqref{det33} for the hypotheses \eqref{det3} for different thresholds $\Delta$ and sample sizes $n$. The dotted lines represent $\Delta$ (vertical) and the nominal level $\alpha = 5\%$ (horizontal). The data are simulated according to the VAR model \eqref{eq:var_model}.}
  \label{fig:var_model_alternative}
\end{figure}

\begin{figure}[t]
  \fontsize{8}{10}\selectfont

  \begingroup%
  \makeatletter%
  \providecommand\color[2][]{%
    \errmessage{(Inkscape) Color is used for the text in Inkscape, but the package 'color.sty' is not loaded}%
    \renewcommand\color[2][]{}%
  }%
  \providecommand\transparent[1]{%
    \errmessage{(Inkscape) Transparency is used (non-zero) for the text in Inkscape, but the package 'transparent.sty' is not loaded}%
    \renewcommand\transparent[1]{}%
  }%
  \providecommand\rotatebox[2]{#2}%
  \newcommand*\fsize{\dimexpr\f@size pt\relax}%
  \newcommand*\lineheight[1]{\fontsize{\fsize}{#1\fsize}\selectfont}%
  \ifx\svgwidth\undefined%
    \setlength{\unitlength}{\textwidth}%
    \ifx\svgscale\undefined%
      \relax%
    \else%
      \setlength{\unitlength}{\unitlength * \real{\svgscale}}%
    \fi%
  \else%
    \setlength{\unitlength}{\svgwidth}%
  \fi%
  \global\let\svgwidth\undefined%
  \global\let\svgscale\undefined%
  \makeatother%
  \begin{picture}(1,0.69565217)%
    \lineheight{1}%
    \setlength\tabcolsep{0pt}%
    \put(0,0){\includegraphics[width=\unitlength,page=1]{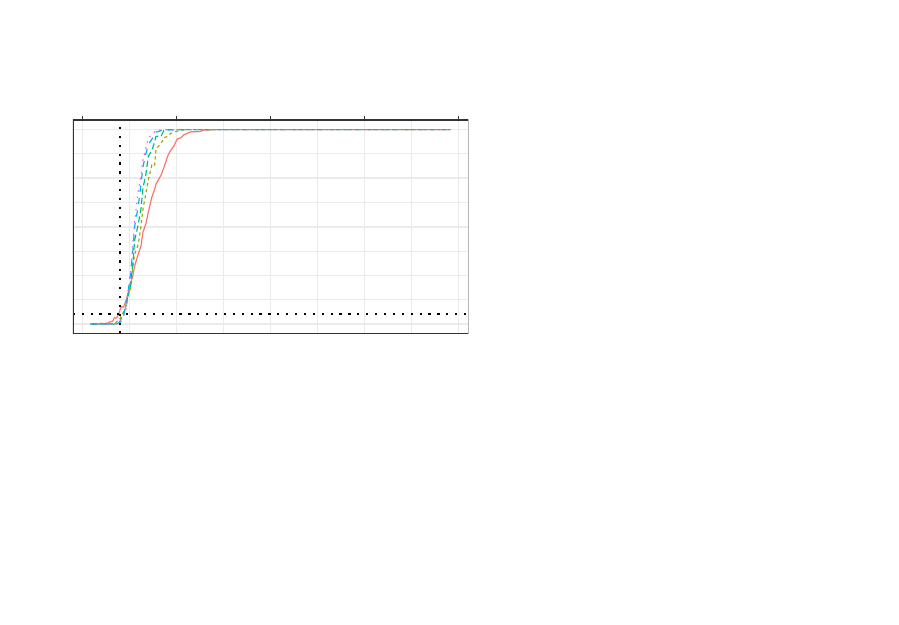}}%
    \put(0.09184348,0.5707826){\color[rgb]{0.30196078,0.30196078,0.30196078}\makebox(0,0)[t]{\lineheight{1.25}\smash{\begin{tabular}[t]{c}0.00\end{tabular}}}}%
    \put(0.19645217,0.5707826){\color[rgb]{0.30196078,0.30196078,0.30196078}\makebox(0,0)[t]{\lineheight{1.25}\smash{\begin{tabular}[t]{c}0.25\end{tabular}}}}%
    \put(0.30106087,0.5707826){\color[rgb]{0.30196078,0.30196078,0.30196078}\makebox(0,0)[t]{\lineheight{1.25}\smash{\begin{tabular}[t]{c}0.50\end{tabular}}}}%
    \put(0.40566955,0.5707826){\color[rgb]{0.30196078,0.30196078,0.30196078}\makebox(0,0)[t]{\lineheight{1.25}\smash{\begin{tabular}[t]{c}0.75\end{tabular}}}}%
    \put(0.51027827,0.5707826){\color[rgb]{0.30196078,0.30196078,0.30196078}\makebox(0,0)[t]{\lineheight{1.25}\smash{\begin{tabular}[t]{c}1.00\end{tabular}}}}%
    \put(0.07243478,0.32965218){\color[rgb]{0.30196078,0.30196078,0.30196078}\makebox(0,0)[rt]{\lineheight{1.25}\smash{\begin{tabular}[t]{r}0.00\end{tabular}}}}%
    \put(0.07243478,0.38372174){\color[rgb]{0.30196078,0.30196078,0.30196078}\makebox(0,0)[rt]{\lineheight{1.25}\smash{\begin{tabular}[t]{r}0.25\end{tabular}}}}%
    \put(0.07243478,0.4377913){\color[rgb]{0.30196078,0.30196078,0.30196078}\makebox(0,0)[rt]{\lineheight{1.25}\smash{\begin{tabular}[t]{r}0.50\end{tabular}}}}%
    \put(0.07243478,0.49186087){\color[rgb]{0.30196078,0.30196078,0.30196078}\makebox(0,0)[rt]{\lineheight{1.25}\smash{\begin{tabular}[t]{r}0.75\end{tabular}}}}%
    \put(0.07243478,0.54593044){\color[rgb]{0.30196078,0.30196078,0.30196078}\makebox(0,0)[rt]{\lineheight{1.25}\smash{\begin{tabular}[t]{r}1.00\end{tabular}}}}%
    \put(0,0){\includegraphics[width=\unitlength,page=2]{plot_fourier_sparse_largesamples_svg-tex.pdf}}%
    \put(0.52144346,0.30384349){\makebox(0,0)[rt]{\lineheight{1.25}\smash{\begin{tabular}[t]{r}$\Delta = 0.1$\end{tabular}}}}%
    \put(0,0){\includegraphics[width=\unitlength,page=3]{plot_fourier_sparse_largesamples_svg-tex.pdf}}%
    \put(0.55133911,0.5707826){\color[rgb]{0.30196078,0.30196078,0.30196078}\makebox(0,0)[t]{\lineheight{1.25}\smash{\begin{tabular}[t]{c}0.00\end{tabular}}}}%
    \put(0.65594785,0.5707826){\color[rgb]{0.30196078,0.30196078,0.30196078}\makebox(0,0)[t]{\lineheight{1.25}\smash{\begin{tabular}[t]{c}0.25\end{tabular}}}}%
    \put(0.76055653,0.5707826){\color[rgb]{0.30196078,0.30196078,0.30196078}\makebox(0,0)[t]{\lineheight{1.25}\smash{\begin{tabular}[t]{c}0.50\end{tabular}}}}%
    \put(0.86516522,0.5707826){\color[rgb]{0.30196078,0.30196078,0.30196078}\makebox(0,0)[t]{\lineheight{1.25}\smash{\begin{tabular}[t]{c}0.75\end{tabular}}}}%
    \put(0.9697739,0.5707826){\color[rgb]{0.30196078,0.30196078,0.30196078}\makebox(0,0)[t]{\lineheight{1.25}\smash{\begin{tabular}[t]{c}1.00\end{tabular}}}}%
    \put(0.98093909,0.30384349){\makebox(0,0)[rt]{\lineheight{1.25}\smash{\begin{tabular}[t]{r}$\Delta = 0.2$\end{tabular}}}}%
    \put(0,0){\includegraphics[width=\unitlength,page=4]{plot_fourier_sparse_largesamples_svg-tex.pdf}}%
    \put(0.07243478,0.04855655){\color[rgb]{0.30196078,0.30196078,0.30196078}\makebox(0,0)[rt]{\lineheight{1.25}\smash{\begin{tabular}[t]{r}0.00\end{tabular}}}}%
    \put(0.07243478,0.1026261){\color[rgb]{0.30196078,0.30196078,0.30196078}\makebox(0,0)[rt]{\lineheight{1.25}\smash{\begin{tabular}[t]{r}0.25\end{tabular}}}}%
    \put(0.07243478,0.15669566){\color[rgb]{0.30196078,0.30196078,0.30196078}\makebox(0,0)[rt]{\lineheight{1.25}\smash{\begin{tabular}[t]{r}0.50\end{tabular}}}}%
    \put(0.07243478,0.21076522){\color[rgb]{0.30196078,0.30196078,0.30196078}\makebox(0,0)[rt]{\lineheight{1.25}\smash{\begin{tabular}[t]{r}0.75\end{tabular}}}}%
    \put(0.07243478,0.26483478){\color[rgb]{0.30196078,0.30196078,0.30196078}\makebox(0,0)[rt]{\lineheight{1.25}\smash{\begin{tabular}[t]{r}1.00\end{tabular}}}}%
    \put(0,0){\includegraphics[width=\unitlength,page=5]{plot_fourier_sparse_largesamples_svg-tex.pdf}}%
    \put(0.52144346,0.0227478){\makebox(0,0)[rt]{\lineheight{1.25}\smash{\begin{tabular}[t]{r}$\Delta = 0.3$\end{tabular}}}}%
    \put(0,0){\includegraphics[width=\unitlength,page=6]{plot_fourier_sparse_largesamples_svg-tex.pdf}}%
    \put(0.53097391,0.59017391){\makebox(0,0)[t]{\lineheight{1.25}\smash{\begin{tabular}[t]{c}Distance Correlation\end{tabular}}}}%
    \put(0.03271304,0.30271304){\rotatebox{90}{\makebox(0,0)[t]{\lineheight{1.25}\smash{\begin{tabular}[t]{c}Rejection Rate\end{tabular}}}}}%
    \put(0.98093909,0.0227478){\makebox(0,0)[rt]{\lineheight{1.25}\smash{\begin{tabular}[t]{r}$\Delta = 0.4$\end{tabular}}}}%
    \put(0,0){\includegraphics[width=\unitlength,page=7]{plot_fourier_sparse_largesamples_svg-tex.pdf}}%
    \put(0.32132173,0.6452){\makebox(0,0)[lt]{\lineheight{1.25}\smash{\begin{tabular}[t]{l}$n$\end{tabular}}}}%
    \put(0,0){\includegraphics[width=\unitlength,page=8]{plot_fourier_sparse_largesamples_svg-tex.pdf}}%
    \put(0.40413914,0.64657391){\makebox(0,0)[lt]{\lineheight{1.25}\smash{\begin{tabular}[t]{l}200\end{tabular}}}}%
    \put(0.47951305,0.64657391){\makebox(0,0)[lt]{\lineheight{1.25}\smash{\begin{tabular}[t]{l}400\end{tabular}}}}%
    \put(0.55488695,0.64657391){\makebox(0,0)[lt]{\lineheight{1.25}\smash{\begin{tabular}[t]{l}600\end{tabular}}}}%
    \put(0.6302435,0.64657391){\makebox(0,0)[lt]{\lineheight{1.25}\smash{\begin{tabular}[t]{l}800\end{tabular}}}}%
    \put(0.70561741,0.64657391){\makebox(0,0)[lt]{\lineheight{1.25}\smash{\begin{tabular}[t]{l}1000\end{tabular}}}}%
  \end{picture}%
  \endgroup%

  \caption{\it Simulation results for the model \eqref{eq:fourier_model} with covariance matrix \eqref{eq:sparse} and varying parameter $\rho$. The $x$-axes give the corresponding values for $\mathrm{dcor}(X,Y)$. Empirical rejection rates of the test \eqref{det5} for the hypotheses \eqref{det2} for different thresholds $\Delta$ and sample sizes $n$. The dotted lines represent $\Delta$ (vertical) and the nominal level $\alpha = 5\%$ (horizontal).}
  \label{fig:fourier_modell1}
\end{figure}

\begin{figure}[t]
  \fontsize{8}{10}\selectfont

  \begingroup%
  \makeatletter%
  \providecommand\color[2][]{%
    \errmessage{(Inkscape) Color is used for the text in Inkscape, but the package 'color.sty' is not loaded}%
    \renewcommand\color[2][]{}%
  }%
  \providecommand\transparent[1]{%
    \errmessage{(Inkscape) Transparency is used (non-zero) for the text in Inkscape, but the package 'transparent.sty' is not loaded}%
    \renewcommand\transparent[1]{}%
  }%
  \providecommand\rotatebox[2]{#2}%
  \newcommand*\fsize{\dimexpr\f@size pt\relax}%
  \newcommand*\lineheight[1]{\fontsize{\fsize}{#1\fsize}\selectfont}%
  \ifx\svgwidth\undefined%
    \setlength{\unitlength}{\textwidth}%
    \ifx\svgscale\undefined%
      \relax%
    \else%
      \setlength{\unitlength}{\unitlength * \real{\svgscale}}%
    \fi%
  \else%
    \setlength{\unitlength}{\svgwidth}%
  \fi%
  \global\let\svgwidth\undefined%
  \global\let\svgscale\undefined%
  \makeatother%
  \begin{picture}(1,0.69565217)%
    \lineheight{1}%
    \setlength\tabcolsep{0pt}%
    \put(0,0){\includegraphics[width=\unitlength,page=1]{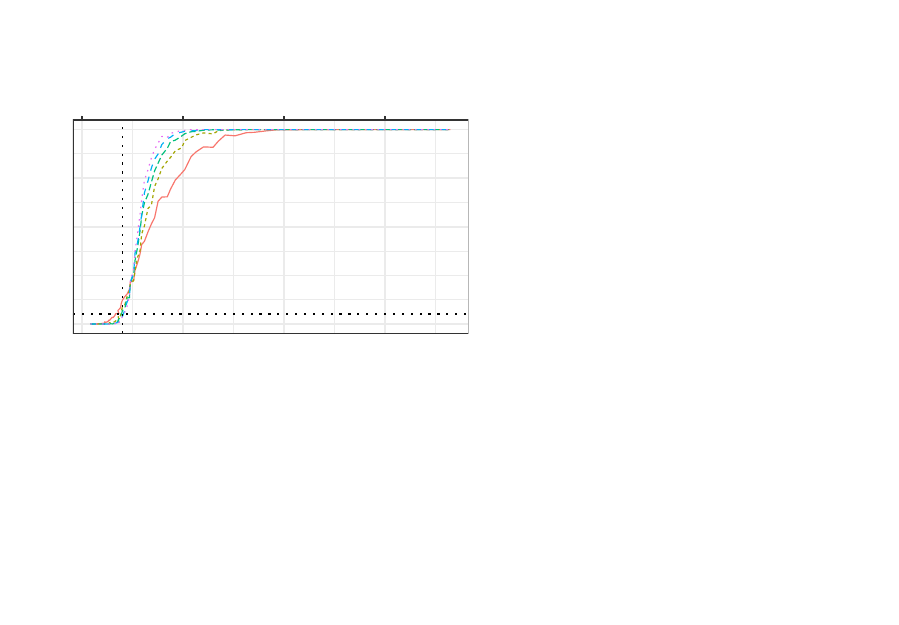}}%
    \put(0.09118261,0.5707826){\color[rgb]{0.30196078,0.30196078,0.30196078}\makebox(0,0)[t]{\lineheight{1.25}\smash{\begin{tabular}[t]{c}0.00\end{tabular}}}}%
    \put(0.20368696,0.5707826){\color[rgb]{0.30196078,0.30196078,0.30196078}\makebox(0,0)[t]{\lineheight{1.25}\smash{\begin{tabular}[t]{c}0.25\end{tabular}}}}%
    \put(0.3161913,0.5707826){\color[rgb]{0.30196078,0.30196078,0.30196078}\makebox(0,0)[t]{\lineheight{1.25}\smash{\begin{tabular}[t]{c}0.50\end{tabular}}}}%
    \put(0.42871303,0.5707826){\color[rgb]{0.30196078,0.30196078,0.30196078}\makebox(0,0)[t]{\lineheight{1.25}\smash{\begin{tabular}[t]{c}0.75\end{tabular}}}}%
    \put(0.07243478,0.32965218){\color[rgb]{0.30196078,0.30196078,0.30196078}\makebox(0,0)[rt]{\lineheight{1.25}\smash{\begin{tabular}[t]{r}0.00\end{tabular}}}}%
    \put(0.07243478,0.38372174){\color[rgb]{0.30196078,0.30196078,0.30196078}\makebox(0,0)[rt]{\lineheight{1.25}\smash{\begin{tabular}[t]{r}0.25\end{tabular}}}}%
    \put(0.07243478,0.4377913){\color[rgb]{0.30196078,0.30196078,0.30196078}\makebox(0,0)[rt]{\lineheight{1.25}\smash{\begin{tabular}[t]{r}0.50\end{tabular}}}}%
    \put(0.07243478,0.49186087){\color[rgb]{0.30196078,0.30196078,0.30196078}\makebox(0,0)[rt]{\lineheight{1.25}\smash{\begin{tabular}[t]{r}0.75\end{tabular}}}}%
    \put(0.07243478,0.54593044){\color[rgb]{0.30196078,0.30196078,0.30196078}\makebox(0,0)[rt]{\lineheight{1.25}\smash{\begin{tabular}[t]{r}1.00\end{tabular}}}}%
    \put(0,0){\includegraphics[width=\unitlength,page=2]{plot_fourier_full_largesamples_svg-tex.pdf}}%
    \put(0.52144346,0.30384349){\makebox(0,0)[rt]{\lineheight{1.25}\smash{\begin{tabular}[t]{r}$\Delta = 0.1$\end{tabular}}}}%
    \put(0,0){\includegraphics[width=\unitlength,page=3]{plot_fourier_full_largesamples_svg-tex.pdf}}%
    \put(0.55067828,0.5707826){\color[rgb]{0.30196078,0.30196078,0.30196078}\makebox(0,0)[t]{\lineheight{1.25}\smash{\begin{tabular}[t]{c}0.00\end{tabular}}}}%
    \put(0.66318258,0.5707826){\color[rgb]{0.30196078,0.30196078,0.30196078}\makebox(0,0)[t]{\lineheight{1.25}\smash{\begin{tabular}[t]{c}0.25\end{tabular}}}}%
    \put(0.77568694,0.5707826){\color[rgb]{0.30196078,0.30196078,0.30196078}\makebox(0,0)[t]{\lineheight{1.25}\smash{\begin{tabular}[t]{c}0.50\end{tabular}}}}%
    \put(0.8882087,0.5707826){\color[rgb]{0.30196078,0.30196078,0.30196078}\makebox(0,0)[t]{\lineheight{1.25}\smash{\begin{tabular}[t]{c}0.75\end{tabular}}}}%
    \put(0.98093909,0.30384349){\makebox(0,0)[rt]{\lineheight{1.25}\smash{\begin{tabular}[t]{r}$\Delta = 0.2$\end{tabular}}}}%
    \put(0,0){\includegraphics[width=\unitlength,page=4]{plot_fourier_full_largesamples_svg-tex.pdf}}%
    \put(0.07243478,0.04855655){\color[rgb]{0.30196078,0.30196078,0.30196078}\makebox(0,0)[rt]{\lineheight{1.25}\smash{\begin{tabular}[t]{r}0.00\end{tabular}}}}%
    \put(0.07243478,0.1026261){\color[rgb]{0.30196078,0.30196078,0.30196078}\makebox(0,0)[rt]{\lineheight{1.25}\smash{\begin{tabular}[t]{r}0.25\end{tabular}}}}%
    \put(0.07243478,0.15669566){\color[rgb]{0.30196078,0.30196078,0.30196078}\makebox(0,0)[rt]{\lineheight{1.25}\smash{\begin{tabular}[t]{r}0.50\end{tabular}}}}%
    \put(0.07243478,0.21076522){\color[rgb]{0.30196078,0.30196078,0.30196078}\makebox(0,0)[rt]{\lineheight{1.25}\smash{\begin{tabular}[t]{r}0.75\end{tabular}}}}%
    \put(0.07243478,0.26483478){\color[rgb]{0.30196078,0.30196078,0.30196078}\makebox(0,0)[rt]{\lineheight{1.25}\smash{\begin{tabular}[t]{r}1.00\end{tabular}}}}%
    \put(0,0){\includegraphics[width=\unitlength,page=5]{plot_fourier_full_largesamples_svg-tex.pdf}}%
    \put(0.52144346,0.0227478){\makebox(0,0)[rt]{\lineheight{1.25}\smash{\begin{tabular}[t]{r}$\Delta = 0.3$\end{tabular}}}}%
    \put(0,0){\includegraphics[width=\unitlength,page=6]{plot_fourier_full_largesamples_svg-tex.pdf}}%
    \put(0.53097391,0.59017391){\makebox(0,0)[t]{\lineheight{1.25}\smash{\begin{tabular}[t]{c}Distance Correlation\end{tabular}}}}%
    \put(0.03271304,0.30271304){\rotatebox{90}{\makebox(0,0)[t]{\lineheight{1.25}\smash{\begin{tabular}[t]{c}Rejection Rate\end{tabular}}}}}%
    \put(0.98093909,0.0227478){\makebox(0,0)[rt]{\lineheight{1.25}\smash{\begin{tabular}[t]{r}$\Delta = 0.4$\end{tabular}}}}%
    \put(0,0){\includegraphics[width=\unitlength,page=7]{plot_fourier_full_largesamples_svg-tex.pdf}}%
    \put(0.32132173,0.6452){\makebox(0,0)[lt]{\lineheight{1.25}\smash{\begin{tabular}[t]{l}$n$\end{tabular}}}}%
    \put(0,0){\includegraphics[width=\unitlength,page=8]{plot_fourier_full_largesamples_svg-tex.pdf}}%
    \put(0.40413914,0.64657391){\makebox(0,0)[lt]{\lineheight{1.25}\smash{\begin{tabular}[t]{l}200\end{tabular}}}}%
    \put(0.47951305,0.64657391){\makebox(0,0)[lt]{\lineheight{1.25}\smash{\begin{tabular}[t]{l}400\end{tabular}}}}%
    \put(0.55488695,0.64657391){\makebox(0,0)[lt]{\lineheight{1.25}\smash{\begin{tabular}[t]{l}600\end{tabular}}}}%
    \put(0.6302435,0.64657391){\makebox(0,0)[lt]{\lineheight{1.25}\smash{\begin{tabular}[t]{l}800\end{tabular}}}}%
    \put(0.70561741,0.64657391){\makebox(0,0)[lt]{\lineheight{1.25}\smash{\begin{tabular}[t]{l}1000\end{tabular}}}}%
  \end{picture}%
  \endgroup%

  \caption{\it Simulation results for the model \eqref{eq:fourier_model} with covariance matrix \eqref{eq:full} and varying parameter $\rho$. The $x$-axes give the corresponding values for $\mathrm{dcor}(X,Y)$. Empirical rejection rates of the test \eqref{det5} for the hypotheses \eqref{det2} for different thresholds $\Delta$ and sample sizes $n$. The dotted lines represent $\Delta$ (vertical) and the nominal level $\alpha = 5\%$ (horizontal).}
  \label{fig:fourier_modell_voll2}
\end{figure}

\begin{figure}[t]
  \fontsize{8}{10}\selectfont

  \begingroup%
  \makeatletter%
  \providecommand\color[2][]{%
    \errmessage{(Inkscape) Color is used for the text in Inkscape, but the package 'color.sty' is not loaded}%
    \renewcommand\color[2][]{}%
  }%
  \providecommand\transparent[1]{%
    \errmessage{(Inkscape) Transparency is used (non-zero) for the text in Inkscape, but the package 'transparent.sty' is not loaded}%
    \renewcommand\transparent[1]{}%
  }%
  \providecommand\rotatebox[2]{#2}%
  \newcommand*\fsize{\dimexpr\f@size pt\relax}%
  \newcommand*\lineheight[1]{\fontsize{\fsize}{#1\fsize}\selectfont}%
  \ifx\svgwidth\undefined%
    \setlength{\unitlength}{\textwidth}
    \ifx\svgscale\undefined%
      \relax%
    \else%
      \setlength{\unitlength}{\unitlength * \real{\svgscale}}%
    \fi%
  \else%
    \setlength{\unitlength}{\svgwidth}%
  \fi%
  \global\let\svgwidth\undefined%
  \global\let\svgscale\undefined%
  \makeatother%
  \begin{picture}(1,0.69565217)%
    \lineheight{1}%
    \setlength\tabcolsep{0pt}%
    \put(0,0){\includegraphics[width=\unitlength,page=1]{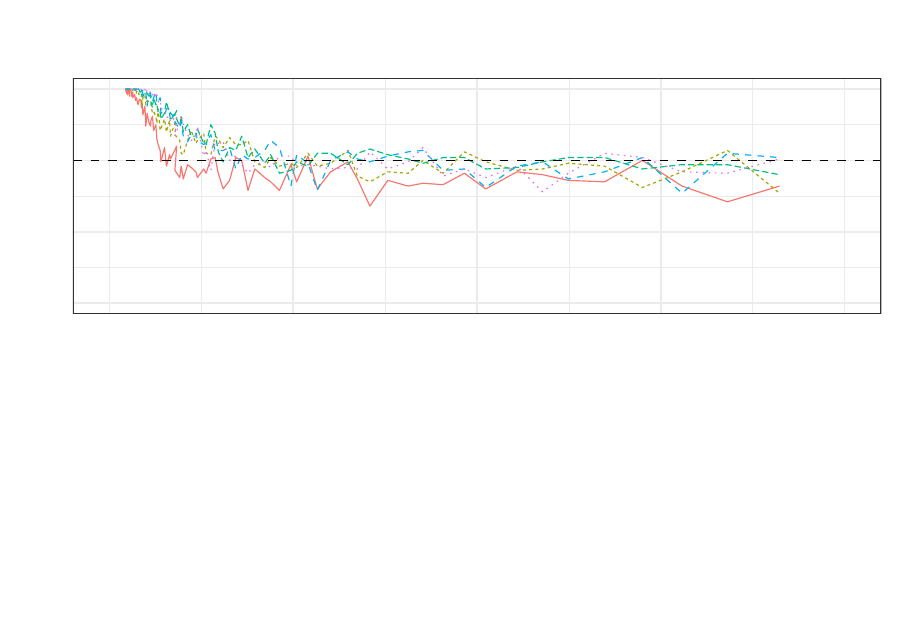}}%
    \put(0.07243478,0.35281738){\color[rgb]{0.30196078,0.30196078,0.30196078}\makebox(0,0)[rt]{\lineheight{1.25}\smash{\begin{tabular}[t]{r}0.85\end{tabular}}}}%
    \put(0.07243478,0.43222608){\color[rgb]{0.30196078,0.30196078,0.30196078}\makebox(0,0)[rt]{\lineheight{1.25}\smash{\begin{tabular}[t]{r}0.90\end{tabular}}}}%
    \put(0.07243478,0.51163479){\color[rgb]{0.30196078,0.30196078,0.30196078}\makebox(0,0)[rt]{\lineheight{1.25}\smash{\begin{tabular}[t]{r}0.95\end{tabular}}}}%
    \put(0.07243478,0.59104348){\color[rgb]{0.30196078,0.30196078,0.30196078}\makebox(0,0)[rt]{\lineheight{1.25}\smash{\begin{tabular}[t]{r}1.00\end{tabular}}}}%
    \put(0,0){\includegraphics[width=\unitlength,page=2]{plot_ci_fourier_svg-tex.pdf}}%
    \put(0.07243478,0.07172172){\color[rgb]{0.30196078,0.30196078,0.30196078}\makebox(0,0)[rt]{\lineheight{1.25}\smash{\begin{tabular}[t]{r}0.85\end{tabular}}}}%
    \put(0.07243478,0.15113042){\color[rgb]{0.30196078,0.30196078,0.30196078}\makebox(0,0)[rt]{\lineheight{1.25}\smash{\begin{tabular}[t]{r}0.90\end{tabular}}}}%
    \put(0.07243478,0.23053913){\color[rgb]{0.30196078,0.30196078,0.30196078}\makebox(0,0)[rt]{\lineheight{1.25}\smash{\begin{tabular}[t]{r}0.95\end{tabular}}}}%
    \put(0.07243478,0.30994783){\color[rgb]{0.30196078,0.30196078,0.30196078}\makebox(0,0)[rt]{\lineheight{1.25}\smash{\begin{tabular}[t]{r}1.00\end{tabular}}}}%
    \put(0,0){\includegraphics[width=\unitlength,page=3]{plot_ci_fourier_svg-tex.pdf}}%
    \put(0.12191304,0.04575652){\color[rgb]{0.30196078,0.30196078,0.30196078}\makebox(0,0)[t]{\lineheight{1.25}\smash{\begin{tabular}[t]{c}0.00\end{tabular}}}}%
    \put(0.32645218,0.04575652){\color[rgb]{0.30196078,0.30196078,0.30196078}\makebox(0,0)[t]{\lineheight{1.25}\smash{\begin{tabular}[t]{c}0.25\end{tabular}}}}%
    \put(0.53097391,0.04575652){\color[rgb]{0.30196078,0.30196078,0.30196078}\makebox(0,0)[t]{\lineheight{1.25}\smash{\begin{tabular}[t]{c}0.50\end{tabular}}}}%
    \put(0.73551307,0.04575652){\color[rgb]{0.30196078,0.30196078,0.30196078}\makebox(0,0)[t]{\lineheight{1.25}\smash{\begin{tabular}[t]{c}0.75\end{tabular}}}}%
    \put(0.94003481,0.04575652){\color[rgb]{0.30196078,0.30196078,0.30196078}\makebox(0,0)[t]{\lineheight{1.25}\smash{\begin{tabular}[t]{c}1.00\end{tabular}}}}%
    \put(0.53097391,0.02365218){\makebox(0,0)[t]{\lineheight{1.25}\smash{\begin{tabular}[t]{c}Distance Correlation\end{tabular}}}}%
    \put(0.03271304,0.33685217){\rotatebox{90}{\makebox(0,0)[t]{\lineheight{1.25}\smash{\begin{tabular}[t]{c}Covering Rate\end{tabular}}}}}%
    \put(0,0){\includegraphics[width=\unitlength,page=4]{plot_ci_fourier_svg-tex.pdf}}%
    \put(0.32132173,0.6452){\makebox(0,0)[lt]{\lineheight{1.25}\smash{\begin{tabular}[t]{l}$n$\end{tabular}}}}%
    \put(0,0){\includegraphics[width=\unitlength,page=5]{plot_ci_fourier_svg-tex.pdf}}%
    \put(0.40413914,0.64657391){\makebox(0,0)[lt]{\lineheight{1.25}\smash{\begin{tabular}[t]{l}200\end{tabular}}}}%
    \put(0.47951305,0.64657391){\makebox(0,0)[lt]{\lineheight{1.25}\smash{\begin{tabular}[t]{l}400\end{tabular}}}}%
    \put(0.55488695,0.64657391){\makebox(0,0)[lt]{\lineheight{1.25}\smash{\begin{tabular}[t]{l}600\end{tabular}}}}%
    \put(0.6302435,0.64657391){\makebox(0,0)[lt]{\lineheight{1.25}\smash{\begin{tabular}[t]{l}800\end{tabular}}}}%
    \put(0.70561741,0.64657391){\makebox(0,0)[lt]{\lineheight{1.25}\smash{\begin{tabular}[t]{l}1000\end{tabular}}}}%
  \end{picture}%
  \endgroup%

  \caption{\it Empirical covering rates of the confidence interval \eqref{eq:confidence_interval} with $\alpha = 0.95$ and data generated according to Eq.\@ \eqref{eq:fourier_model}. The dashed line represents the nominal level of $95 \%$. Top panel: Full covariance matrix \eqref{eq:full}. Bottom panel: Sparse covariance matrix \eqref{eq:sparse}.}
  \label{fig:ci_fourier}
\end{figure}

The first model under consideration is a $2$-dimensional vector autoregressive (VAR) process, i.e. $(X_k, Y_k)$, $k = 1, \ldots, n$, satisfy the equation
\begin{equation}
  \label{eq:var_model}
  \begin{pmatrix}
    X_n \\ Y_n
  \end{pmatrix} = A \, \begin{pmatrix}
    X_{n-1} \\ Y_{n-1}
  \end{pmatrix} + \varepsilon_n,
\end{equation}
where the $\varepsilon_n$, $n \in \mathbb{N}$, are i.i.d.\@ centred Gaussian random vectors with covariance matrix $\Sigma$.  The matrix $A$ is chosen as
$$
  A = \begin{pmatrix}
    1/2 & 1/5 \\ 1/5 & 1/2
  \end{pmatrix}
$$
and $\Sigma$ is given by
$$
  \Sigma = \begin{pmatrix}
    1 & \rho \\ \rho & 1
  \end{pmatrix}.
$$

In Figure \ref{fig:var_model}  we display the empirical rejection rate of the test \eqref{det5} for the hypotheses \eqref{det2} in the VAR model \eqref{eq:var_model}. The results reflect the qualitative behavior  of the test \eqref{det5} described in Proposition \ref{cor:test_dcov}. In the ``interior'' of the null hypothesis ($\mathrm{dcor}_n(X,Y) < \Delta$) the empirical rejection probabilities are close $0$ and they quickly increase under the alternative ($\mathrm{dcor}_n(X,Y) > \Delta$). At the ``boundary'' of the hypotheses ($\mathrm{dcor}_n(X,Y) = \Delta$) the empirical rejection probabilities are very close to the nominal level $\alpha$ (which is predicted by Proposition \ref{cor:test_dcov} as the limit in this case). The empirical covering rates of the confidence intervals \eqref{eq:confidence_interval} for this model are given in Figure \ref{fig:ci_VAR}. They are close to the nominal covering rate of $95\%$, though some deviation occurs at very small sample sizes ($n = 25, 50$). However, it should be pointed out that even at these small sample sizes, the empirical covering rate almost never drops below $90\%$. All in all, the finite sample performance seems to be satisfactory even for small sample sizes, and very good for medium and large sample sizes.

A  key advantages of testing relevant hypotheses is that one may switch the roles of the null hypothesis and alternative.  More precisely, if one wants to control the probability of an error for deciding in favor of $ \mathrm{dcor}(X,Y)  < \Delta $ one can consider the testing problem  \eqref{det3} and the corresponding test \eqref{det33}.
To illustrate this fact we display in Figure \ref{fig:var_model_alternative}  the rejection rates of the test \eqref{det33}  in the
VAR model \eqref{eq:var_model}. These results correspond to the theoretical findings in Remark \ref{rem2}.  We observe that the type I error is very small under the null hypotheses and that larger sample sizes are required to reject the null hypothesis in  \eqref{det2} with reasonable power.

To investigate the performance of our test in a non-Euclidean setting, we simulate a ten-dimensional VAR process $(C_k)_{k \in \mathbb{N}}$ satisfying
$$
  C_k = C_{k-1}/2 + \eta_k,
$$
where the $\eta_k$ are i.i.d.\@ centred Gaussian random vectors with their covariance matrices $(\sigma_{ij})_{i,j = 1, \ldots, 10}$ given by either
\begin{equation}
  \label{eq:sparse}
  \sigma_{ij} = \begin{cases}
    1 & \quad \textrm{if } i = j, \\ \rho &\quad \textrm{if } |i-j| = 5, \\ 0 &\quad \textrm{otherwise,}
  \end{cases}
\end{equation}
or
\begin{equation}
  \label{eq:full}
  \sigma_{ij} = \rho^{|i-j|}
\end{equation}
for appropriate choices of $\rho$. Writing $C_k^{(j)}$ for the $j$-th coordinate of $C_k$, we then define random functions $X_k$ and $Y_k$ by
\begin{equation}
  \label{eq:fourier_model}
  (X_k, Y_k) : [0,1] \to \mathbb{R}^2, \quad s \mapsto \left(\sum_{j=0}^4 C_k^{(j+1)} \varphi_{j}(s), \sum_{j=0}^4 C_k^{(j+6)} \varphi_{j}(s)\right),
\end{equation}
where $\varphi_0 = 1$, $\varphi_{2j}(s) = \sqrt{2} \sin(2\pi js)$ and $\varphi_{2j-1}(s) = \sqrt{2} \cos(2\pi js)$. The simulation results for our functional data are given in Figures \ref{fig:fourier_modell1} and \ref{fig:fourier_modell_voll2}. In contrast to the two-dimensional VAR-model \eqref{eq:var_model}, we only give the simulation results for $n \geq 200$, since we observed a failure of the proposed method for sample sizes much smaller than this. We suspect that this is simply due to the more complex structure of the functional data, and based on this effect we recommend against employing the self-normalizing procedure for functional data for samples smaller than $n = 200$ (which in any case does not seem to be an unreasonably large sample size). The simulation results confirm our theoretical findings in the case of functional data. In most scenarios the nominal level $\alpha=5\%$ is well approximated at the boundary of the hypotheses ($\mathrm{dcor}_n(X,Y) = \Delta$). In the interior of the null hypothesis ($\mathrm{dcor}_n(X,Y) <  \Delta$) the type I error is smaller than the nominal level, while the rejection probabilities quickly increase  with increasing dependence measured by $\mathrm{dcor}_n(X,Y) > \Delta $. The covering rates of the confidence intervals in this model are given in Figure \ref{fig:ci_fourier}. They are overall still satisfactory, although the case $n = 200$ in the sparse covariance matrix variant drops below the nominal covering rate of $95\%$ in many instances. Furthermore, we can observe an interesting phenomenon; namely, that the covering rate approaches $100\%$ as the value of the distance correlation approaches $0$, indicating a failure of the underlying asymptotic theory in the edge case $\mathrm{dcor}(X,Y) = 0$. This is consistent with the different limiting behaviour of the distance covariance under the hypothesis of perfect independence and in particular the different rate of convergence; cf.\@ Theorem \ref{thm:independence_weak_convergence}.

\section{Conclusions}
Distance correlation has long been accepted as a powerful tool to measure dependence between random objects $X$ and $Y$. Two of its most appealing properties are its generality -- it can be used  data in separable metric spaces -- and the fact that it perfectly characterizes independence. However, in classical hypothesis testing, one does not care about a small amount of dependence and is looking to detect only what we call `practically significant' dependence. It can therefore be desirable to test the relevant hypothesis $\mathrm{dcov}(X,Y) \leq \Delta$ instead of the classical hypothesis $\mathrm{dcor}(X,Y) = 0$. Alternatively, one can avoid  hypothesis testing  and construct confidence intervals for $\mathrm{dcor}(X,Y)$. In this article, we have presented a self-normalization approach which can be used for both of these objectives. Our methods work not only for i.i.d.\@ data but also for time series under the well-established $\beta$-mixing condition, and their  finite sample performance is convincing. For finite-dimensional data, it even worked reliably for extremely small sample sizes ($n \geq 25$); for infinite-dimensional data, we recommend using samples of at least medium size (in our simulations, $n \geq 200$ was sufficient). The procedure does not require any resampling, which makes its computational complexity far superior to, say, bootstrap-based methods, which for data with serial dependence appear to be the only real competitors to our procedure in the context of confidence intervals.

There are several possible avenues for generalization of our results, most of which stem from the computational advantage that the self-normalization brings when compared with resampling methods. For instance, \cite{betkendehlingkroll:2022} and \cite{chu:2023} construct tests for independence of entire time series based on the distance covariance combined with bootstrap methods. Translating this problem into a relevant hypothesis framework and applying the self-normalization strategy considered here could bring huge improvements in terms of computational complexity. Similarly, \cite{davis_et_al:2018} provide a test for serial dependence based on so-called cross-distance covariance function, i.e.\@ an distance covariance based analogue of the usual cross-covariance function \citep[see also][]{zhou:2012}. To determine critical values, they need to resort to computationally expensive resampling methods. Again, the switch to a relevant hypothesis setting in combination with a self-normalization approach could be very useful here. Moreover, it would be desirable to obtain results similar to ours in the case of long range dependent data  \citep[see][]{Pipiras_Taqqu_2017}.

A further interesting and promising direction for future research is the  extension of the developed methodology to decide which of the covariates ${X^{(1)}, \ldots,X^{(d)}}$ has a practically  relevant impact  on the random variable $Y$. Observing the weak convergence in \eqref{det8}, it is easy to see that the $p$-value of the test \eqref{det5} is given by $ p_i(\Delta) := \mathbb{P} \big  ( W > ({ \mathrm{dcor}_n(X^{(i)},Y) - \Delta})/{V_{n,\mathrm{dcor}}} \big )$, $i=1, \ldots , d$, where the random variable $W$ is defined in \eqref{eq:definition_W}. Therefore, common concepts of multiple testing
\citep[see, for example][]{Benjamini1995,BenjaminiYekutieli2001}
can easily be adapted if a  threshold $\Delta$ for the multiple hypotheses can be specified (in fact it possible to use different thresholds for in the different hypotheses). However, although the $p$-values are increasing functions of the threshold,  extending the discussion in Remark \ref{rem:visual_inspection}(a) is a more delicate problem and depends intrinsically on the multiple testing procedure under consideration. For example, the Benjamini–Yekutieli procedure considers the ordered $p$-values $p_{(1)}(\Delta) \leq \ldots \leq p_{(d)}(\Delta)  $ and declares the  $p$-values $p_{(1)}(\Delta) \leq \ldots \leq p_{(i^*)}(\Delta)  $ as significant, where $i^* \in \{1, \ldots , d\} $ is the largest integer  satisfying
\begin{align}
  \label{conclusion}
  p_{(i^*)} (\Delta)  \leq \tfrac{i^*}{d} \Big (\sum_{j=1}^d\tfrac{1}{j} \Big )^{-1} \alpha.
\end{align}
The parameter $\alpha$ is called the false discovery rate and is fixed in advance. In this situation, there exists a trade-off between $\Delta$ and $i^*$.  For example, decreasing $\Delta$ will increase $i^*$. On the other hand, if one fixes $i^*$, we can find a minimum $\hat \Delta_{\alpha,i^*} $ such that \eqref{conclusion} holds. The interpretation of such a data adaptive threshold is not easy. A discussion of the relations between $\alpha$, $i^*$ and $\Delta$ for different multiple testing problems is deferred to future research.

\section{Proofs}
\label{sec:proofs}

\subsection{Preliminaries and Proofs of Theorem \ref{thm:dcov_prozesskonvergenz} and Corollary \ref{cor:dcor_processkonvergenz}}
Throughout this section, we use the notations $Z := (X,Y)$, $Z_n := (X_n, Y_n)$ and $\hat{\mathbb{P}}_n$, $\hat{\mathbb{P}}_n^X$ and $\hat{\mathbb{P}}_n^Y$ for the empirical versions of the distributions $\mathbb{P}^{X,Y}$,$\mathbb{P}^{Y}$ and  $\mathbb{P}^{Y}$, respectively. For an index set $T$, $\ell^\infty(T)$ denotes the space of bounded functions on $T$ equipped with the supremum norm, which we sometimes denote by $\|\cdot\|_\infty$. The symbol $\mathcal{O}_{\mathbb{P}}$ denotes boundedness in probability, and analogous definitions hold for $\mathcal{O}_{a.s.}$ (almost sure boundedness) and other Landau symbols such as $o_{\mathbb{P}}$ or $o_{a.s.}$. $\mathcal{L}(U)$ denotes the distribution of a random variable $U$. Finally, $\|\cdot\|_2$ is the usual Euclidean norm on $\mathbb{R}^d$, and the symbols $\land$ and $\lor$ may be used to denote the minimum and maximum, respectively, of two numbers.

\begin{lemma}
  \label{lem:landau_claim}
  Let $(S, \|\cdot\|)$ be a normed space and $f,g : \mathbb{N} \to S$ two functions. Suppose that $\|g(n)\| > 0$ for all $n \in \mathbb{N}$ and that $(\|g(n)\|)_{n \in \mathbb{N}}$ is a monotonically increasing sequence. Then the two following statements are equivalent:
  \begin{enumerate}
    \item $\|f(n)\| = \mathcal{O}\left(\|g(n)\|\right)$,
    \item $\max_{1 \leq k \leq n} \|f(k)\| = \mathcal{O}\left(\|g(n)\|\right)$.
  \end{enumerate}
\end{lemma}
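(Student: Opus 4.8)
The plan is to treat the two implications separately; the direction (ii)$\Rightarrow$(i) is immediate, while (i)$\Rightarrow$(ii) reduces to splitting the running maximum into a finite ``head'' and a ``tail''. Throughout, recall that for $S$-valued functions the assertion $\phi(n)=\mathcal{O}(\psi(n))$ is read as: there is a constant $C>0$ with $\|\phi(n)\|\le C\|\psi(n)\|$ for all sufficiently large $n$; in particular $\max_{1\le k\le n}f(k)=\mathcal{O}(g(n))$ means $\max_{1\le k\le n}\|f(k)\|\le C\|g(n)\|$ for large $n$.

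For (ii)$\Rightarrow$(i): since $\|f(n)\|\le\max_{1\le k\le n}\|f(k)\|$ for every $n$, any bound of the form $\max_{1\le k\le n}\|f(k)\|\le C\|g(n)\|$ at once yields $\|f(n)\|\le C\|g(n)\|$, which is (i). For (i)$\Rightarrow$(ii), assume $\|f(n)\|\le C\|g(n)\|$ for all $n\ge N_0$ and write, for $n\ge N_0$,
$$\max_{1\le k\le n}\|f(k)\|=\max\Bigl(\underbrace{\max_{1\le k<N_0}\|f(k)\|}_{=:M},\ \max_{N_0\le k\le n}\|f(k)\|\Bigr).$$
For the tail term, the monotonicity of $(\|g(n)\|)_{n}$ gives $\|f(k)\|\le C\|g(k)\|\le C\|g(n)\|$ whenever $N_0\le k\le n$, hence $\max_{N_0\le k\le n}\|f(k)\|\le C\|g(n)\|$. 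For the head term, $M$ is a fixed finite constant, and since $\|g(n)\|\ge\|g(1)\|>0$ for every $n$ (again by monotonicity together with positivity), we get $M\le (M/\|g(1)\|)\,\|g(n)\|$. Combining the two estimates gives $\max_{1\le k\le n}\|f(k)\|\le\max\{C,\,M/\|g(1)\|\}\,\|g(n)\|$ for all $n\ge N_0$, i.e.\ (ii). If one prefers the Landau bounds to hold for all $n\in\mathbb{N}$ rather than only eventually, one simply takes $N_0=1$, in which case $M$ is vacuous and only the tail estimate is needed.

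The argument is entirely elementary and has no real obstacle; the only place where the hypotheses are genuinely used is the head estimate, where both $\|g(n)\|>0$ and the monotonicity of $\|g(n)\|$ are invoked to absorb the constant $M$ into $\mathcal{O}(\|g(n)\|)$. The role of this lemma in the paper is to let us pass, in the proof of Theorem~\ref{thm:dcov_prozesskonvergenz}, between a rate bound for a single $V$-statistic remainder and the corresponding uniform-in-$\lambda$ bound needed for tightness of the sequential process.
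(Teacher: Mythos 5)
Your proof is correct and follows essentially the same route as the paper's: the direction (ii)$\Rightarrow$(i) is immediate, and for (i)$\Rightarrow$(ii) both arguments handle the finitely many early indices by exploiting the strict positivity of $\|g(k)\|$ to absorb them into the constant, and then invoke the monotonicity of $(\|g(n)\|)_{n\in\mathbb{N}}$ to dominate the running maximum by $\|g(n)\|$ (the paper does this via the argmax index $k_n$, you via a head/tail split of the maximum, which is only a cosmetic difference). No gaps.
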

\begin{proof}
  The implication from $(ii)$ to $(i)$ is obvious, so we prove the reverse.

  Suppose that $(i)$ holds. Then, by definition of the Landau symbol $\mathcal{O}$, there exist a constant $c_0 > 0$ and some threshold $N \in \mathbb{N}$ such that $\|f(n)\| \leq c_0 \,\|g(n)\|$ for all $n > N$. For any $1 \leq k \leq N$, let us define $c_k := \|f(k)\|/\|g(k)\|$ and $C := \max_{0 \leq k \leq N} c_k$. These quantities are well-defined because we assumed that $\|g(n)\|$ is strictly positive for all $n \in \mathbb{N}$. By construction, we have
  \begin{equation}
    \label{eq:landau1}
    \|f(n)\| \leq C \, \|g(n)\|
  \end{equation}
  for all $n \in \mathbb{N}$. Let $k_n \in \{1, \ldots, n\}$ be such that $\|f(k_n)\| = \max_{1 \leq k \leq n} \|f(k)\|$. Then,
  $$
    \max_{1 \leq k \leq n} \|f(k)\| = \|f(k_n)\| \leq C \, \|g(k_n)\| \leq C \, \|g(n)\|,
  $$
  where we have used the definition of $k_n$ in the first equality, \eqref{eq:landau1} in the first inequality and the fact that the sequence $(\|g(n)\|)_{n \in \mathbb{N}}$ is isotone by assumption in the second inequality. This is exactly the defining inequality of $(ii)$, which completes our proof.
\end{proof}

\begin{lemma}
  \label{lem:deltan_growthrate}
  Let $W = (W(\lambda))_{\lambda \geq 0}$ be a Brownian motion. Let $\Delta_n = (\Delta_n(\lambda))_{0 \leq \lambda \leq 1}$ be the element of $\ell^\infty[0,1]$ defined by
  $$
    \Delta_n(\lambda) := W(n\lambda) - W(\lfloor n\lambda\rfloor).
  $$
  Then $\|\Delta_n\|_{\infty} = \sup_{0 \leq \lambda \leq 1} |\Delta_n(\lambda)|
    = o_\mathbb{P}\left(n^\alpha\right)$ for any $\alpha > 0$.
\end{lemma}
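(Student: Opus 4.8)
The plan is to turn the supremum over $\lambda\in[0,1]$ into a maximum over $n$ unit-length blocks, and then to control that maximum by a union bound together with a Gaussian tail estimate for the oscillation of Brownian motion over an interval of length one.

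First I would substitute $t=n\lambda$. Since $\lambda\mapsto n\lambda$ maps $[0,1]$ onto $[0,n]$ and $W(n)-W(\lfloor n\rfloor)=0$, and since $\lfloor t\rfloor=k$ on $[k,k+1)$ while $W(\cdot)$ is continuous, one obtains
$$
\|\Delta_n\|_\infty \;=\; \sup_{0\le t\le n}\bigl|W(t)-W(\lfloor t\rfloor)\bigr| \;=\; \max_{0\le k\le n-1} M_k, \qquad M_k:=\sup_{0\le s\le 1}\bigl|W(k+s)-W(k)\bigr|.
$$
Because $W$ has stationary and independent increments, for each $k$ the process $(W(k+s)-W(k))_{0\le s\le 1}$ is again a standard Brownian motion on $[0,1]$, so $M_k\overset{d}{=}M_0=\sup_{0\le s\le 1}|W(s)|$.

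Next I would record a tail bound for $M_0$. By the reflection principle and the Gaussian estimate $\mathbb{P}(W(1)\ge x)\le \tfrac12 e^{-x^2/2}$ for $x\ge 0$, we have $\mathbb{P}(\sup_{0\le s\le 1}W(s)\ge x)=2\,\mathbb{P}(W(1)\ge x)\le e^{-x^2/2}$, and the same bound for $-\inf_{0\le s\le 1}W(s)$; hence $\mathbb{P}(M_0\ge x)\le 2e^{-x^2/2}$ for all $x\ge 0$. Combining this with the union bound, for any $\alpha>0$ and $\varepsilon>0$,
$$
\mathbb{P}\bigl(\|\Delta_n\|_\infty\ge\varepsilon\,n^\alpha\bigr) \;\le\; \sum_{k=0}^{n-1}\mathbb{P}\bigl(M_k\ge\varepsilon\,n^\alpha\bigr) \;=\; n\,\mathbb{P}\bigl(M_0\ge\varepsilon\,n^\alpha\bigr) \;\le\; 2\,n\,e^{-\varepsilon^2 n^{2\alpha}/2} \;\longrightarrow\; 0 .
$$
As $\varepsilon>0$ was arbitrary this yields $\|\Delta_n\|_\infty=o_\mathbb{P}(n^\alpha)$; since the right-hand side is summable in $n$, Borel--Cantelli in fact gives $\|\Delta_n\|_\infty=o(n^\alpha)$ almost surely. (Alternatively, once $M_n=\mathcal{O}(n^\alpha)$ almost surely is known, the passage $\max_{k\le n}M_k=\mathcal{O}(n^\alpha)$ is precisely the content of Lemma~\ref{lem:landau_claim}.)

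I do not foresee a genuine obstacle here: the only points requiring slight care are the rewriting in the first display (in particular that the single endpoint $t=n$ contributes nothing) and the observation that the $M_k$ are identically distributed with light tails; their dependence is irrelevant, since only a union bound is used.
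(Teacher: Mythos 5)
Your proof is correct and follows essentially the same route as the paper: reduce $\|\Delta_n\|_\infty$ to a maximum over unit-length increment blocks, note each block is distributed as the running supremum of a standard Brownian motion on $[0,1]$, and finish with a union bound and a Gaussian tail estimate. The only cosmetic difference is that you derive the tail bound $\mathbb{P}(\sup_{0\le s\le 1}|W(s)|\ge x)\le 2e^{-x^2/2}$ directly from the reflection principle, whereas the paper cites Lemma 16 of Freedman (and your Borel--Cantelli remark, giving the almost-sure version, is a valid bonus not claimed in the statement).
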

\begin{proof}
  Fix some arbitrary $\alpha > 0$. For any $s \geq 0$, define $W^{(\lfloor n\lambda\rfloor)}(s) = W(\lfloor n\lambda \rfloor + s) - W(\lfloor n\lambda \rfloor)$. By standard arguments, one can show that $W^{(\lfloor n\lambda\rfloor)} := \left(W^{(\lfloor n\lambda\rfloor)}(s)\right)_{s \geq 0}$ is a Brownian motion. Because $|n\lambda - \lfloor n\lambda\rfloor| < 1$, there is some $t = t(n, \lambda) \in [0,1]$ such that $W^{(\lfloor n\lambda\rfloor)}(t) = \Delta_n(\lambda)$. Thus,
  $$
    \left|\Delta_n(\lambda)\right|
    \leq \sup_{0 \leq s \leq 1} \left|W^{(\lfloor n\lambda\rfloor)}(s)\right|,
  $$
  and so
  \begin{equation}
    \label{eq:deltan_sup_ungleichung}
    \|\Delta_n\|_{\infty} = \sup_{0 \leq \lambda \leq 1} |\Delta_n(\lambda)| \leq \sup_{0 \leq \lambda \leq 1}\sup_{0 \leq s \leq 1} \left|W^{(\lfloor n\lambda\rfloor)}(s)\right| = \max_{0 \leq k \leq n} \sup_{0 \leq s \leq 1} \left| W^{(k)}(s)\right|.
  \end{equation}
  By Lemma 16 in \cite{freedman:brownian_motion}, it holds for any Brownian motion $B$ that
  \begin{equation}
    \label{eq:freedman_ungleichung}
    \mathbb{P}\left(\sup_{0 \leq s \leq K} \left|B(s)\right| \geq b\right) \leq 4 \mathbb{P}\left(B(1) \geq b K^{-1/2}\right)
  \end{equation}
  for all $b, K > 0$.  Combining Eqs.\@ \eqref{eq:deltan_sup_ungleichung} and \eqref{eq:freedman_ungleichung} yields
  for any  $\varepsilon > 0$
  \begin{align*}
    \mathbb{P}\left(\|\Delta_n\|_{\infty} \geq \varepsilon n^\alpha\right) & \leq \mathbb{P}\left(\max_{0 \leq k \leq n} \sup_{0 \leq s \leq 1} \left| W^{(k)}(s)\right| \geq \varepsilon\, n^\alpha\right) \\
                                                                           & \leq \sum_{k=0}^n \mathbb{P}\left(\sup_{0 \leq s \leq 1} \left| W^{(k)}(s)\right| \geq \varepsilon\, n^\alpha\right)           \\
                                                                           & \leq 4\sum_{k=0}^n \mathbb{P}\left(W^{(k)}(1) \geq \varepsilon\, n^\alpha\right)                                               \\
                                                                           & = 4(n+1)\mathbb{P}\left(Z \geq \varepsilon\, n^\alpha\right)
  \end{align*}
  for a standard normally distributed random variable $Z$. Applying the tail bound
  $$
    \mathbb{P}(Z > t) \leq \exp (-t^2/2)/(\sqrt{2\pi}t)
  $$
  for all $t \geq 0$ gives
  \begin{align*}
    \mathbb{P}\left(\|\Delta_n\|_{\infty} \geq \varepsilon\, n^\alpha\right)
     & \leq c n^{1-\alpha} \exp (-(\varepsilon^2/2) n^{2\alpha} )  = o(1)
  \end{align*}
  for some constant $c > 0$ depending on $\varepsilon$, which yields
  $$
    \mathbb{P}\Big (\big \|n^{-\alpha}\Delta_n\big \|_{\infty} \geq \varepsilon\Big ) \xrightarrow[n \to \infty]{} 0,
  $$
  and  proves our claim.
\end{proof}

The following lemma is  a  simple consequence of the much stronger Theorem 4 in \cite{kuelbs_philipp:1980}. It is valid for $\alpha$-mixing processes, which is a weaker assumption than absolute regularity. The $\alpha$-mixing coefficient of two $\sigma$-algebras $\mathcal{A}$ and $\mathcal{B}$ is defined by
$$
  \alpha(\mathcal{A}, \mathcal{B}) = \sup_{A \in \mathcal{A}, B \in \mathcal{B}} |\mathbb{P}(A \cap B) - \mathbb{P}(A)\mathbb{P}(B)|,
$$
and a stochastic process $(\xi_k)_{k \in \mathbb{N}}$ is called $\alpha$-mixing or strongly mixing if
$$
  \alpha(n) = \sup_{j \in \mathbb{N}}\alpha\left(\mathcal{F}_1^j, \mathcal{F}_{j+n}^\infty\right) \xrightarrow[n \to \infty]{} 0,
$$
where $\mathcal{F}_i^j$ is the $\sigma$-algebra generated by $\xi_i, \ldots, \xi_j$. We again refer to \cite{bradley:2007} for more information.
\begin{lemma}
  \label{lem:kuelbs_philipp}
  Let $(\xi_n)_{n \in \mathbb{N}}$ be a strictly stationary and strongly mixing sequence of centred \mbox{$\mathbb{R}^d$-valued} random variables whose $(2+\varepsilon)$-moments are uniformly bounded for some $\varepsilon > 0$. Suppose that the mixing coefficients $(\alpha(n))_{n \in \mathbb{N}}$ satisfy $\alpha(n) = \mathcal{O}\left(n^{-r}\right)$ with $r = (1+\delta)(1+2/\varepsilon)$ for some $\delta > 0$. Define the partial sum process $S_n = (S_n(\lambda))_{0 \leq \lambda \leq 1}$ by
  $$
    S_n(\lambda) = \frac{1}{\sqrt{n}} \sum_{i=1}^{\lfloor n\lambda \rfloor} \xi_i.
  $$
  Then there exist independent standard Brownian motions $B_i = (B_i(\lambda))_{0 \leq \lambda \leq 1}$, $1 \leq i \leq d$, such that
  $$
    S_n \rightsquigarrow \Gamma^\frac{1}{2}
    \begin{pmatrix}
      B_1 \\ \vdots \\ B_d
    \end{pmatrix}
  $$
  in $(\ell^\infty[0,1])^d$, and the $ d\times d $ matrix $\Gamma = (\gamma_{ij})_{1 \leq i,j \leq d}$ is given by
  \begin{equation}
    \label{eq:gamma_identity}
    \gamma_{ij} = \mathbb{E}\left[\xi_{1i}\xi_{1j}\right] + \sum_{k=2}^\infty \mathbb{E}\left[\xi_{ki}\xi_{1j}\right] + \sum_{k=2}^\infty \mathbb{E}\left[\xi_{1i}\xi_{kj}\right] < \infty,
  \end{equation}
  with $\xi_{ki}$ denoting the $i$-th coordinate of the vector $\xi_k$.
\end{lemma}
\begin{proof}
  Without loss of generality assume that the $(2 +\varepsilon)$-moments of the random variables $\xi_n$ are bounded by $1$. If this is not the case, we can achieve it by rescaling the $\xi_n$.

  By Theorem 4 in \cite{kuelbs_philipp:1980}, there exist (possibly on a richer probability space) independent Brownian motions $W_i = (W_i(\lambda))_{0 \leq \lambda < \infty}$, $1 \leq i \leq d$, such that
  \begin{equation}
    \label{eq:kuelbs_philipp_invarianzprinzip}
    \sum_{i=1}^n \xi_i - \Gamma^{1/2} \begin{pmatrix}
      W_1(n) \\ \vdots \\ W_d(n)
    \end{pmatrix}
    = \mathcal{O}\left(n^{\frac{1}{2} - \gamma}\right)
  \end{equation}
  almost surely for some $\gamma > 0$ depending only on $\delta, \varepsilon$ and $d$. By Lemma \ref{lem:landau_claim}, Eq.\@ \eqref{eq:kuelbs_philipp_invarianzprinzip} is equivalent to
  $$
    \max_{1 \leq k \leq n} \left| \sum_{i=1}^k \xi_i - \Gamma^{1/2} \begin{pmatrix}
      W_1(k) \\ \vdots \\ W_d(k)
    \end{pmatrix}\right|
    = \mathcal{O}\left(n^{\frac{1}{2} - \gamma}\right)
  $$
  almost surely. Thus, for any $0 \leq \lambda \leq 1$,
  $$
    \left|\sum_{i=1}^{\lfloor n \lambda \rfloor} \xi_i - \Gamma^{1/2} \begin{pmatrix}
      W_1(\lfloor n \lambda \rfloor) \\ \vdots \\ W_d(\lfloor n \lambda \rfloor)
    \end{pmatrix}\right| \leq
    \max_{1 \leq k \leq n} \left|\sum_{i=1}^k \xi_i - \Gamma^{1/2} \begin{pmatrix}
      W_1(k) \\ \vdots \\ W_d(k)
    \end{pmatrix}\right|
    = \mathcal{O}\left(n^{\frac{1}{2} - \gamma}\right)
  $$
  almost surely, and so
  \begin{equation}
    \label{eq:kuelbs_philipp_konsequenz}
    \sup_{0 \leq \lambda \leq 1}\left|\frac{1}{\sqrt{n}}\sum_{i=1}^{\lfloor n \lambda \rfloor} \xi_i - \Gamma^{1/2} \frac{1}{\sqrt{n}}\begin{pmatrix}
      W_1(\lfloor n \lambda \rfloor) \\ \vdots \\ W_d(\lfloor n \lambda \rfloor)
    \end{pmatrix}\right|
    = \mathcal{O}\left(n^{- \gamma}\right)
  \end{equation}
  almost surely. For ease of notation, let us write $W = (W_1, \ldots, W_d)^\top$. Then we can write $W(\lfloor n\lambda \rfloor) = W(n\lambda) + \Delta_n(\lambda)$, where $\Delta_n(\lambda) = W(\lfloor n\lambda \rfloor) - W(n\lambda)$. Denote by $\Delta^{(1)}_n, \ldots, \Delta^{(d)}_n$ the coordinate functions of $\Delta_n$. Then $\|\Delta^{(i)}_n\|_\infty = o_\mathbb{P}(\sqrt{n})$ for all $1 \leq i \leq d$ by Lemma \ref{lem:deltan_growthrate}, and so
  $$
    \max_{1 \leq i \leq d} \big \|\Delta^{(i)}_n \big \|_{\infty} = o_\mathbb{P}(\sqrt{n}).
  $$
  For any $n \in \mathbb{N}$ and any $1 \leq i \leq d$, the rescaled Brownian motion $(W_i(n\lambda)/\sqrt{n})_{0 \leq \lambda < \infty}$ is again a standard Brownian motion, and so the entire process $(W(n\lambda)/\sqrt{n})_{0 \leq \lambda < \infty}$ is equal in distribution to $B := (B_1, \ldots, B_d)$, where the $B_i$ are independent standard Brownian motions. Thus,
  $$
    \frac{1}{\sqrt{n}} (W(\lfloor n\lambda \rfloor))_{0 \leq \lambda < \infty} = \frac{1}{\sqrt{n}} (W(n\lambda))_{0 \leq \lambda < \infty} + \frac{\Delta_n}{\sqrt{n}} \rightsquigarrow B.
  $$
  Using \eqref{eq:kuelbs_philipp_konsequenz}, we therefore get
  $$
    S_n = \Gamma^\frac{1}{2} \frac{1}{\sqrt{n}} (W(\lfloor n\lambda \rfloor))_{0 \leq \lambda \leq 1} + \mathcal{O}_{a.s.}\left(n^{-\gamma}\right) \rightsquigarrow\Gamma^\frac{1}{2} B.
  $$
\end{proof}

The empirical distance covariance $\mathrm{dcov}(\hat{\mathbb{P}}_n)$ defined in \eqref{det3}
is a $V$-statistic with kernel function $h'$ given in \eqref{eq:kern_h'}. Let
\begin{equation}
  \label{eq:kernel_h}
  h(z_1, \ldots, z_6) := \sum_{\sigma \in \mathfrak{S}_6} h'(z_{\sigma(1)}, \ldots, z_{\sigma(6)}),
\end{equation}
be the symmetrisation of $h'$, where $\mathfrak{S}_6$ denotes the symmetric group of order $6$. Then, as any $V$-statistic with kernel $g'$ is equal to the $V$-statistic with kernel $g$, where $g$ is the symmetrisation of $g'$, the empirical distance covariance may also be expressed as a $V$-statistic with kernel $h$.

We shall require the so-called Hoeffding decomposition. If $U_1, \ldots, U_n$ are observations from a stationary process with marginal distribution $\xi$ and $g$ is a symmetric kernel of order $m$, then the $V$-statistic with kernel $g$ based on $U_1, \ldots, U_n$ allows for a representation of the form
\begin{equation}
  \label{eq:hoeffding_decomposition}
  V_g(U_1, \ldots, U_n) = \sum_{i=0}^m {m \choose i} V_n^{(i)}(g; \xi),
\end{equation}
where each $V_n^{(i)}(g;\xi)$ is the $V$-statistic with kernel function
$$
  g_i(u_1, \ldots, u_i;\xi) = \sum_{k=0}^i {i \choose k} (-1)^{i-k} {\bf g}_k(u_1, \ldots, u_k; \xi),
$$
with
$$
  {\bf g}_k(u_1, \ldots, u_k;\xi) = \int g(u_1, \ldots, u_m) ~\mathrm{d}\xi^{m-k}(u_{k+1}, \ldots, u_m).
$$
In particular, $V^{(0)}(g;\xi)$ is equal to $\int g ~\mathrm{d}\xi^m$. The Hoeffding decomposition is a widely used tool in the theory of $U$- and $V$-statistics; for this particular version, see for instance \cite{denker_keller:1983}. Its name goes back to \cite{hoeffding:1948} who first used it for $U$-statistics. The advantage of this representation is that the $g_i$ are degenerate kernel functions, i.e.\@ it holds that $\mathbb{E}[g_i(U_1, u_2, \ldots, u_i)] = 0$ $\xi^{i-1}$-almost surely.

The first part of the following lemma is essentially Lemma 3 in \cite{arcones:1998}, but stated for $V$-statistics instead of $U$-statistics.

\begin{lemma}
  \label{lem:arcones_lemma}
  Let $(U_i)_{i \in \mathbb{N}}$ be a strictly stationary and absolutely regular process and let $g$ be a symmetric and degenerate kernel of order $m$. Assume that, for some $p > 2$ and some $M$ uniform in $i_1, \ldots, i_m$,
  $$
    \mathbb{E}\left[|g(U_{i_1}, \ldots, U_{i_m})|^p\right] < M < \infty.
  $$
  Then it holds that
  \begin{equation}
    \label{eq:arcones_claim}
    \mathbb{E}\left[|V_g(U_1, \ldots, U_n)|^2\right] \lesssim M^2 n^{-m} \left\{1 + \sum_{d=1}^n d^{m-1} \beta(d)^{(p-2)/p}\right\}
  \end{equation}
  and
  \begin{equation}
    \label{eq:increment_claim}
    \mathbb{E}\left[|V_g(U_1, \ldots, U_n) - V_g(U_1, \ldots, U_{n-1})|^2\right] \lesssim M^2 n^{-(m+1)} \left\{1 + \sum_{d=1}^n d^{m-1} \beta(d)^{(p-2)/p}\right\},
  \end{equation}
  where in both instances the symbol $\lesssim$ is hiding a constant depending only on $m$. If $\beta(n) = \mathcal{O}\left(n^{-r}\right)$ for some $r > mp/(p-2)$, then the sums in the upper bounds converge for $n \to \infty$.
\end{lemma}
\begin{proof}
  For Eq.\@ \eqref{eq:arcones_claim}, see Lemma 3 in \cite{arcones:1998} or Lemma 2.3 in \cite{kroll:2023}. We sketch the proof of Eq.\@ \eqref{eq:increment_claim}, which is also very similar to Lemma 3 in \cite{arcones:1998}. Throughout this proof, all constants hidden by the symbol $\lesssim$ will either be universal or only depend on $m$. Observe that
  \begin{align}
    \begin{split}
      \label{eq:v_diffs}
       & \mathbb{E}\left[\left(\sum_{1 \leq i_1, \ldots, i_{m} \leq n} g(U_{i_1}, \ldots, U_{i_m}) - \sum_{1 \leq i_1, \ldots, i_{m} \leq n-1} g(U_{i_1}, \ldots, U_{i_m})\right)^2\right]     \\
       & =\mathbb{E}\left[\left(\sum_{\emptyset \neq A \subseteq \{1, \ldots, m\}}\sum_{1 \leq i_1, \ldots, i_{m} \leq n : \{j ~|~ i_j = n\} = A} g(U_{i_1}, \ldots, U_{i_m}) \right)^2\right] \\
       & \lesssim \sum_{a = 1}^m \mathbb{E}\left[\left(\sum_{1 \leq i_1, \ldots, i_{m-a} \leq n-1} g(U_{i_1}, \ldots, U_{i_{m-a}}, U_n, \ldots, U_n)\right)^2\right].
    \end{split}
  \end{align}
  Write
  \begin{align*}
     & \mathbb{E}\left[\left(\sum_{1 \leq i_1, \ldots, i_{m-a} \leq n-1} g(U_{i_1}, \ldots, U_{i_{m-a}}, U_n, \ldots, U_n)\right)^2\right]                                                                \\
     & = \sum_{1 \leq i_1, \ldots, i_{2(m-a)} \leq n-1} \mathbb{E}\left[g(U_{i_1}, \ldots, U_{i_{m-a}}, U_n, \ldots, U_n)g(U_{i_{m-a+1}}, \ldots, U_{i_{2(m-a)}}, U_n, \ldots, U_n)\right]                \\
     & \lesssim \sum_{1 \leq i_1 \leq \ldots \leq i_{2(m-a)} \leq n-1} \mathbb{E}\left[g(U_{i_1}, \ldots, U_{i_{m-a}}, U_n, \ldots, U_n)g(U_{i_{m-a+1}}, \ldots, U_{i_{2(m-a)}}, U_n, \ldots, U_n)\right]
  \end{align*}
  From here, we can use essentially the same combinatorial arguments as in the proof of Lemma 3 in \cite{arcones:1998} to obtain the bound
  \begin{align*}
     & \mathbb{E}\left[\left(\sum_{1 \leq i_1, \ldots, i_{m-a} \leq n-1} g(U_{i_1}, \ldots, U_{i_{m-a}}, U_n, \ldots, U_n)\right)^2\right] \\
     & \quad \lesssim M^2 n^{m-a} \left\{1 + \sum_{d=1}^n d^{m-1} \beta(d)^{(p-2)/p}\right\},
  \end{align*}
  which, together with Eq.\@ \eqref{eq:v_diffs} yields
  \begin{align}
    \begin{split}
      \label{eq:sum_increment_bound}
       & \mathbb{E}\left[\left(\sum_{1 \leq i_1, \ldots, i_{m} \leq n} g(U_{i_1}, \ldots, U_{i_m}) - \sum_{1 \leq i_1, \ldots, i_{m} \leq n-1} g(U_{i_1}, \ldots, U_{i_m})\right)^2\right] \\
       & \lesssim M^2 n^{m-1} \left\{1 + \sum_{d=1}^n d^{m-1} \beta(d)^{(p-2)/p}\right\}.
    \end{split}
  \end{align}
  Finally,
  \begin{align*}
     & \mathbb{E}\left[|V_g(U_1, \ldots, U_n) - V_g(U_1, \ldots, U_{n-1})|^2\right]                                                                                                                      \\
     & \lesssim n^{-2m}\mathbb{E}\left[\left(\sum_{1 \leq i_1, \ldots, i_{m} \leq n} g(U_{i_1}, \ldots, U_{i_m}) - \sum_{1 \leq i_1, \ldots, i_{m} \leq n-1} g(U_{i_1}, \ldots, U_{i_m})\right)^2\right] \\
     & \qquad+ \left[\left(\frac{n}{n-1}\right)^m - 1\right]^2 \mathbb{E}\left[|V_g(U_1, \ldots, U_{n-1})|^2\right].
  \end{align*}
  Since $0 \leq [n/(n-1)]^m - 1 \lesssim (n-1)^{-1}$, the right-hand side can further be bounded by
  \begin{align*}
     & M^2 \left[n^{m-1-2m} + (n-1)^{-m-2}\right] \left\{1 + \sum_{d=1}^n d^{m-1} \beta(d)^{(p-2)/p}\right\} \\
     & \lesssim M^2 n^{-(m+1)} \left\{1 + \sum_{d=1}^n d^{m-1} \beta(d)^{(p-2)/p}\right\}
  \end{align*}
  due to Eqs.\@ \eqref{eq:arcones_claim} and \eqref{eq:sum_increment_bound}, which proves our claim.
\end{proof}

\begin{corollary}
  \label{cor:hoeffding_zerlegung_bestimmt_asymptotik}
  Let $(U_i)_{i \in \mathbb{N}}$ be a strictly stationary and absolutely regular process, and let $g$ be a symmetric kernel (not necessarily degenerate) of order $m$. Assume that for some $p > 2$ it holds that
  $$
    \mathbb{E}\left[|g(U_{i_1}, \ldots, U_{i_m})|^p\right] < M < \infty
  $$
  for some $M$ uniform in $i_1, \ldots, i_m$. Assume that the mixing coefficients satisfy $\beta(n) = \mathcal{O}\left(n^{-r}\right)$ for some $r > mp/(p-2)$. Let $V_g(\lambda)$ be the V-statistic with kernel $g$ based on the data $U_1, \ldots, U_{\lfloor n\lambda\rfloor}$. Then, writing $\xi = \mathcal{L}(U_1)$, it holds that
  $$
    \mathbb{E}\left[\sup_{0 \leq \lambda \leq 1} \left|\left(\frac{\lfloor n\lambda \rfloor}{n}\right)^c\left(V_g(\lambda) - \int g~\mathrm{d}\xi^m - {m \choose c} V_{\lfloor n\lambda\rfloor}^{(c)}(g;\xi)\right)\right|^2\right] = o\left(n^{-c}\right),
  $$
  for any $1 \leq c \leq m$ with the property that $g_{c'}$ is equal to $0$ almost surely for all $0 < c' < c$. The constant involved in the Landau symbol only depends on the constant $M$, the degrees $m$ and $c$, and the mixing rate of the sample generating process.
\end{corollary}
\begin{proof}
  Throughout the proof, constants hidden by the symbol $\lesssim$ will either be universal or depend only on the constant $M$, the degrees $m$ and $c$ and the sum $\sum_{d=1}^\infty d^{m-1} \beta(d)^{(p-2)/p}$. Assume without loss of generality that $m \geq 2$, since otherwise the integrand in the expected value is $0$, and so the claim is trivial. Using the Hoeffding decomposition \eqref{eq:hoeffding_decomposition}, we have
  \begin{align*}
     & \mathbb{E}\left[\sup_{0 \leq \lambda \leq 1} \left|\left(\frac{\lfloor n\lambda \rfloor}{n}\right)^c\left(V_g(\lambda) - \int g ~\mathrm{d}\xi^m - {m \choose c} V_{\lfloor n\lambda\rfloor}^{(c)}(g;\xi)\right)\right|^2\right] \\
     & \quad = \mathbb{E}\left[\sup_{0 \leq \lambda \leq 1}\left|\left(\frac{\lfloor n\lambda \rfloor}{n}\right)^c\sum_{i=c+1}^m {m \choose i} V_{\lfloor n\lambda\rfloor}^{(i)}(g;\xi)\right|^2\right]                                 \\
     & \quad \leq (m-c) \sum_{i=c+1}^m {m \choose i} \mathbb{E}\left[\sup_{0 \leq \lambda \leq 1}\left|\left(\frac{\lfloor n\lambda \rfloor}{n}\right)^cV_{\lfloor n\lambda\rfloor}^{(i)}(g;\xi)\right|^2\right].
  \end{align*}
  By Lemma \ref{lem:arcones_lemma}, we have for any $k = 1, \ldots, n$
  $$
    \mathbb{E}\left[\left|\left(\frac{k}{n}\right)^cV_k^{(i)}(g;\xi)\right|^2\right] \leq \gamma_i \frac{k^{2c-i}}{n^{2c}} \leq \gamma_i n^{-c-1},
  $$
  where $\gamma_i$ is a constant depending only on $M$, $i$ and the mixing rate of the sample generating process. By taking the maximum over all $\gamma_{c+1}, \ldots, \gamma_m$, we can make this bound depend only on $M$, $m$ and the mixing rate. Hence,
  \begin{equation}
    \label{eq:Vnk_bound}
    \mathbb{E}\left[\left|n^{c/2}\left(\frac{k}{n}\right)^c V_k^{(i)}(g;\xi)\right|^2\right]  \lesssim n^{-1},
  \end{equation}
  for any fixed $k = 1, \ldots, n$ and all $i \geq c+1$. Now fix some $i = c+1, \ldots, m$ and choose $r \in (0,1)$ such that $(1-r)c = r$, or equivalently $c = r/(1-r)$. Such a parameter $r$ exists since $r/(1-r) \uparrow \infty$ as $r \uparrow 1$. For $k = 0, \ldots, n$, define
  $$
    W_n\left(\frac{k}{n}\right) = \begin{cases}
      0                                                  & \quad \textrm{if } k \leq n^r, \\
      n^{c/2}\left(\frac{k}{n}\right)^c V_k^{(i)}(g;\xi) & \quad \textrm{if } k > n^r.
    \end{cases}
  $$
  Then $W_n((k+1)/n) - W_n(k/n) = 0$ for $k < n^r$, and for $k \geq n^r$ we have
  \begin{align*}
    \left|W_n\left(\frac{k+1}{n}\right) - W_n\left(\frac{k}{n}\right)\right| & \leq n^{c/2} \left|\left(\frac{k+1}{n}\right)^c - \left(\frac{k}{n}\right)^c\right| \left|V_{k+1}^{(i)}(g;\xi)\right|       \\
                                                                             & \qquad + n^{c/2} \left(\frac{k}{n}\right)^c \left|V_{k+1}^{(i)}(g;\xi) - V_{k}^{(i)}(g;\xi)\right|                          \\
                                                                             & \lesssim n^{c/2 - 1}\left|V_{k+1}^{(i)}(g;\xi)\right| + n^{-c/2} k^c \left|V_{k+1}^{(i)}(g;\xi) - V_{k}^{(i)}(g;\xi)\right|
  \end{align*}
  since $[(k+1)/n]^c - (k/n)^c \lesssim n^{-1}$. By taking second moments and using Lemma \ref{lem:arcones_lemma}, we get
  \begin{align}
    \begin{split}
      \label{eq:increments_Wn}
      \mathbb{E}\left[\left|W_n\left(\frac{k+1}{n}\right) - W_n\left(\frac{k}{n}\right)\right|^2\right] & \lesssim n^{c-2} (k+1)^{-i} + n^{-c} k^{2c} (k+1)^{-(i+1)} \\
                                                                                                        & \lesssim n^{c-2} k^{-(c+1)} + n^{-c}k^{c- 2}               \\
                                                                                                        & \lesssim n^{c-2}n^{-r(c+1)} + n^{-2}                       \\
                                                                                                        & \lesssim n^{(1-r)c - r - 2} + n^{-2}                       \\
                                                                                                        & \lesssim n^{-2},
    \end{split}
  \end{align}
  for $k \geq n^r$, since $i \geq c+1$ and $(1-r)c - r = 0$. Now extend $W_n$ to a process indexed in $[0,1]$ by linear interpolation of the values $W_n(k/n)$, $k = 0, \ldots, n$. Then $W_n(\lambda) = 0$ for $0 \leq \lambda \leq n^{r-1}$, and so
  \begin{align*}
     & \mathbb{E}\left[\sup_{0 \leq \lambda \leq n^{r-1}} \left|n^{c/2}\left(\frac{\lfloor n\lambda \rfloor}{n}\right)^c V_{\lfloor n\lambda \rfloor}^{(i)}(g;\xi) - W_n(\lambda)\right|^2\right] \\
     & = \mathbb{E}\left[\sup_{0 \leq \lambda \leq n^{r-1}}\left|n^{c/2}\left(\frac{\lfloor n\lambda \rfloor}{n}\right)^c V_{\lfloor n\lambda \rfloor}^{(i)}(g;\xi)\right|^2\right]               \\
     & \lesssim n \cdot n^{r-1} n^{-1}                                                                                                                                                            \\
     & = n^{r-1}
  \end{align*}
  by Eq.\@ \eqref{eq:Vnk_bound}. On the other hand,
  \begin{align*}
     & \mathbb{E}\left[\sup_{n^{r-1} \leq \lambda \leq 1} \left|n^{c/2}\left(\frac{\lfloor n\lambda \rfloor}{n}\right)^c V_{\lfloor n\lambda \rfloor}^{(i)}(g;\xi) - W_n(\lambda)\right|^2\right] \\
     & \leq \mathbb{E}\left[\max_{n^r \leq k \leq n}\left|W_n\left(\frac{k+1}{n}\right) - W_n\left(\frac{k}{n}\right)\right|^2\right]                                                             \\
     & \leq n \max_{n^r \leq k \leq n}\mathbb{E}\left[\left|W_n\left(\frac{k+1}{n}\right) - W_n\left(\frac{k}{n}\right)\right|^2\right]                                                           \\
     & \lesssim n^{-1}
  \end{align*}
  by Eq.\@ \eqref{eq:increments_Wn}. Therefore,
  \begin{equation}
    \label{eq:Vn_Wn_approximation}
    \mathbb{E}\left[\sup_{0\leq \lambda \leq 1} \left|n^{c/2}\left(\frac{\lfloor n\lambda \rfloor}{n}\right)^c V_{\lfloor n\lambda \rfloor}^{(i)}(g;\xi) - W_n(\lambda)\right|^2\right] \lesssim n^{r-1} + n^{-1} \xrightarrow[n \to \infty]{} 0.
  \end{equation}
  Let us now investigate the increments $W_n(\lambda_1) - W_n(\lambda_2)$ for arbitrary $\lambda_1, \lambda_2$. Recall that we constructed $W_n$ through linear interpolation; in particular, this means that
  $$
    W_n(\lambda) = W_n\left(\frac{k}{n}\right) + n\left(\lambda - \frac{k}{n}\right)\left[W_n\left(\frac{k+1}{n}\right) - W_n\left(\frac{k}{n}\right)\right]
  $$
  if $k/n \leq \lambda < (k+1)/n$. Pick $0 \leq \lambda_1 \leq \lambda_2 \leq 1$. Assume first that $|\lambda_1 - \lambda_2| \leq 1/n$. Then $\lambda_1, \lambda_2$ lie either in the same interval or in two adjacent intervals. Let us consider the first case, i.e.\@ assume that there is some integer $k$ with $k/n \leq \lambda_1 \leq \lambda_2 < (k+1)/n$. Then
  $$
    \mathbb{E}\left[|W_n(\lambda_1) - W_n(\lambda_2)|^2\right] = n^2|\lambda_1 - \lambda_2|^2 \mathbb{E}\left[\left|W_n\left(\frac{k+1}{n}\right) - W_n\left(\frac{k}{n}\right)\right|^2\right] \lesssim |\lambda_1 - \lambda_2|^2
  $$
  by Eq.\@ \eqref{eq:increments_Wn}. Now assume that $|\lambda_1 - \lambda_2| \leq 1/n$ and $k/n \leq \lambda_1 < (k+1)/n \leq \lambda_2 < (k+2)/n$ for some integer $k$. Then
  \begin{align*}
    W_n(\lambda_2) - W_n(\lambda_1) & = W_n\left(\frac{k+1}{n}\right) + n\left(\lambda_2 - \frac{k+1}{n}\right)\left[W_n\left(\frac{k+2}{n}\right) - W_n\left(\frac{k+1}{n}\right)\right] \\
                                    & \quad - W_n\left(\frac{k}{n}\right) - n\left(\lambda_1 - \frac{k}{n}\right)\left[W_n\left(\frac{k+1}{n}\right) - W_n\left(\frac{k}{n}\right)\right] \\
                                    & = n\left(\lambda_2 - \frac{k+1}{n}\right)\left[W_n\left(\frac{k+2}{n}\right) - W_n\left(\frac{k+1}{n}\right)\right]                                 \\
                                    & \quad - n\left(\lambda_1 - \frac{k+1}{n}\right)\left[W_n\left(\frac{k+1}{n}\right) - W_n\left(\frac{k}{n}\right)\right].
  \end{align*}
  Take second moments and apply Eq.\@ \eqref{eq:increments_Wn} to see that
  \begin{align*}
    \mathbb{E}\left[|W_n(\lambda_1) - W_n(\lambda_2)|^2\right] & \leq n^2\left|\lambda_2 - \frac{k+1}{n}\right|^2 \mathbb{E}\left[\left|W_n\left(\frac{k+2}{n}\right) - W_n\left(\frac{k+1}{n}\right)\right|^2\right] \\
                                                               & \quad + n^2\left|\lambda_1 - \frac{k+1}{n}\right|\mathbb{E}\left[\left|W_n\left(\frac{k+1}{n}\right) - W_n\left(\frac{k}{n}\right)\right|^2\right]   \\
                                                               & \lesssim \left|\lambda_2 - \frac{k+1}{n}\right|^2 + \left|\lambda_1 - \frac{k+1}{n}\right|^2                                                         \\
                                                               & \leq 2 \left|\lambda_1 - \lambda_2\right|^2,
  \end{align*}
  since $\lambda_1 \leq (k+1)/n \leq \lambda_2$. Finally, assume that $|\lambda_1 - \lambda_2| > 1/n$. Then there exist integers $k_1 < k_2$ such that $k_1/n \leq \lambda_1 < (k_1+1)/n \leq k_2/n \leq \lambda_2 < (k_2+1)/n$. By the previous bounds for $|\lambda_1 - \lambda_2| \leq 1/n$ and Eq.\@ \eqref{eq:increments_Wn}, we get
  \begin{align*}
    \mathbb{E}\left[|W_n(\lambda_1) - W_n(\lambda_2)|^2\right] & \lesssim \left|\lambda_1 - \frac{k_1}{n}\right|^2 + \left|\lambda_2 - \frac{k_2 + 1}{n}\right|^2 + \mathbb{E}\left[\left|W_n\left(\frac{k_1}{n}\right) - W_n\left(\frac{k_2+1}{n}\right)\right|^2\right] \\
                                                               & \leq 2 |\lambda_1 - \lambda_2|^2 + (k_2 - k_1) \sum_{l = k_1}^{k_2} \mathbb{E}\left[\left|W_n\left(\frac{l}{n}\right) - W_n\left(\frac{l+1}{n}\right)\right|^2\right]                                    \\
                                                               & \lesssim 2|\lambda_1 - \lambda_2|^2 + \frac{(k_2 - k_1)^2}{n^2}                                                                                                                                          \\
                                                               & \leq 4|\lambda_1 - \lambda_2|^2.
  \end{align*}
  Collecting all three cases, we see that
  $$
    \mathbb{E}\left[|W_n(\lambda_1) - W_n(\lambda_2)|^2\right] \lesssim |\lambda_1 - \lambda_2|^2
  $$
  for any $\lambda_1, \lambda_2 \in [0,1]$. On the other hand, it is not difficult to see that the expected value on the left-hand side can also be bounded by $n^{-1}$ as a consequence of Eq.\@ \eqref{eq:Vnk_bound}. Therefore,
  $$
    \mathbb{E}\left[|W_n(\lambda_1) - W_n(\lambda_2)|^2\right] \lesssim d_n^2(\lambda_1, \lambda_2),
  $$
  where $d_n(\lambda_1, \lambda_2) = |\lambda_1 - \lambda_2| \land n^{-1/2}$. $d_n$ defines a semimetric on $[0,1]$ with respect to which the diameter of $[0,1]$ is equal to $n^{-1/2}$. If $D(\varepsilon, d_n)$ denotes the maximum number of $\varepsilon$-separated (with respect to $d_n$) points in $[0,1]$, then $D(\varepsilon, d_n) \lesssim \varepsilon^{-1}$ for any $0 < \varepsilon < n^{-1/2}$. Hence, by Corollary 2.2.5 in \cite{wellner_vandervaart:1996}, and because $W_n(0) = 0$,
  $$
    \left\|\sup_{0 \leq \lambda \leq 1} \left|W_n(\lambda)\right|\right\|_{L_2} \lesssim \int_0^{1/\sqrt{n}} \varepsilon^{-1/2} ~\mathrm{d}\varepsilon \lesssim n^{-1/4}.
  $$
  Squaring both sides and combining this with Eq.\@ \eqref{eq:Vn_Wn_approximation} proves our claim.
\end{proof}

\begin{proof}[Proof of Theorem \ref{thm:dcov_prozesskonvergenz}]
  Let us first assume that $\mathrm{dcov}(X,Y) = 0$. Using the definition of $h$ in Eq.\@ \eqref{eq:kernel_h}, Minkowski's inequality and Hölder's inequality, one can show that
  \begin{equation}
    \label{eq:kernel_h_moment_bound}
    \|h(Z_{i_1}, \ldots, Z_{i_6})\|_{L_p} \leq C\|X_1\|_{L_{2p}} \|Y_1\|_{L_{2p}}
  \end{equation}
  for some universal constant $C > 0$ and all $p \geq 1$. Thus, $h$ has uniformly bounded $(2+\varepsilon/2)$-moments. By Corollary \ref{cor:hoeffding_zerlegung_bestimmt_asymptotik} with $c = 1$ and $m = 6$, it then holds that
  $$
    \frac{\lfloor n\lambda \rfloor}{n}\left[\mathrm{dcov}(\hat{\mathbb{P}}_{\lfloor n\lambda \rfloor}) - \mathrm{dcov}(\mathbb{P}^{X,Y})\right] = \frac{1}{n} \sum_{i=1}^{\lfloor n\lambda \rfloor} h_1(Z_i;\mathbb{P}^{X,Y}) + R_{\lfloor n\lambda \rfloor}(\mathbb{P}^{X,Y}),
  $$
  where
  $$
    \sup_{0 \leq \lambda \leq 1} \left|R_{\lfloor n\lambda \rfloor}(\mathbb{P}^{X,Y})\right| = o_\mathbb{P}\left(n^{-1/2}\right).
  $$
  Let $\mathbb{P}^{X,X}$ and $\mathbb{P}^{Y,Y}$ denote the distributions of $(X,X)$ and $(Y,Y)$, respectively, then analogous identities hold for $\mathrm{dcov}_{\lfloor n\lambda \rfloor}(X,X) - \mathrm{dcov}(X,X)$ and $\mathrm{dcov}_{\lfloor n\lambda \rfloor}(Y,Y) - \mathrm{dcov}(Y,Y)$. Thus,
  \begin{align*}
    X_n(\lambda) - X(\lambda) & = \frac{{\lfloor n\lambda \rfloor}}{n}
    \begin{pmatrix}
      \mathrm{dcov}_{\lfloor n\lambda \rfloor}(X,Y) - \mathrm{dcov}(X,Y) \\ \mathrm{dcov}_{\lfloor n\lambda \rfloor}(X,X) - \mathrm{dcov}(X,X) \\ \mathrm{dcov}_{\lfloor n\lambda \rfloor}(Y,Y) - \mathrm{dcov}(Y,Y)
    \end{pmatrix}
    - \left(\lambda - \frac{{\lfloor n\lambda \rfloor}}{n}\right)
    \begin{pmatrix}
      \mathrm{dcov}(X,Y) \\ \mathrm{dcov}(X,X) \\ \mathrm{dcov}(Y,Y)
    \end{pmatrix}                                    \\
                              & = \frac{1}{n} \sum_{i=1}^{\lfloor n\lambda \rfloor}
    \begin{pmatrix}
      h_1(Z_i;\mathbb{P}^{X,Y}) \\ h_1((X_i, X_i);\mathbb{P}^{X,X}) \\ h_1((Y_i, Y_i);\mathbb{P}^{Y,Y})
    \end{pmatrix}
    +
    \begin{pmatrix}
      R_{\lfloor n\lambda \rfloor}(\mathbb{P}^{X,Y}) \\ R_{\lfloor n\lambda \rfloor}(\mathbb{P}^{X,X}) \\ R_{\lfloor n\lambda \rfloor}(\mathbb{P}^{Y,Y})
    \end{pmatrix}
    \\
                              & \qquad - \left(\lambda - \frac{{\lfloor n\lambda \rfloor}}{n}\right)
    \begin{pmatrix}
      \mathrm{dcov}(X,Y) \\ \mathrm{dcov}(X,X) \\ \mathrm{dcov}(Y,Y)
    \end{pmatrix}                                    \\
                              & = \frac{1}{n} \sum_{i=1}^{\lfloor n\lambda \rfloor}
    \begin{pmatrix}
      h_1(Z_i;\mathbb{P}^{X,Y}) \\ h_1((X_i, X_i);\mathbb{P}^{X,X}) \\ h_1((Y_i, Y_i);\mathbb{P}^{Y,Y})
    \end{pmatrix}
    + R_n^*(\lambda),
  \end{align*}
  with $\sup_{0 \leq \lambda}\|R_n^*(\lambda)\|_2 = o_\mathbb{P}(n^{-1/2})$. By Lemma \ref{lem:kuelbs_philipp}, we therefore get
  $$
    \sqrt{n}(X_n - X)  \rightsquigarrow \Gamma^\frac{1}{2} \begin{pmatrix}
      B_1 \\ B_2 \\ B_3
    \end{pmatrix},
  $$
  where $B_1, B_2, B_3$ are independent standard Brownian motions. The covariance matrix $\Gamma$ is described in the statement of Lemma \ref{lem:kuelbs_philipp} with
  \begin{equation}
    \label{eq:xi_identity}
    \xi_i = \begin{pmatrix}
      h_1(Z_i;\mathbb{P}^{X,Y}) \\ h_1((X_i, X_i);\mathbb{P}^{X,X}) \\ h_1((Y_i, Y_i);\mathbb{P}^{Y,Y})
    \end{pmatrix}.
  \end{equation}

  This proves the general convergence claim. Let us now investigate the structure of the matrix $\Gamma$ if $\mathrm{dcov}(X,Y) = 0$. Using the definition of the kernel $h$, it is not difficult to show that $h_1(\cdot \,;\mathbb{P}^{X,Y})$ is equal to $0$ $\mathbb{P}^{X,Y}$-almost surely if and only if $\mathrm{dcov}(X,Y) = 0$. Therefore, the first coordinates of the special random vectors $\xi_i$ in Eq.\@ \eqref{eq:xi_identity} are $0$ almost surely. On the other hand, the formula for the coordinates of $\Gamma$ in Eq.\@ \eqref{eq:gamma_identity} can be rewritten as
  \begin{equation}
    \label{eq:gamma_formel_alternativ}
    \gamma_{ij} = \lim_{n \to \infty} \mathrm{Cov}\left(\frac{1}{\sqrt{n}} \sum_{k=1}^n \xi_{ki}, \frac{1}{\sqrt{n}} \sum_{k=1}^n \xi_{kj}\right).
  \end{equation}
  This can be verified either by working out this limiting covariance directly and verifying that it is in fact equal to the expression in Eq.\@ \eqref{eq:gamma_identity}, or alternatively by applying a standard central limit theorem for mixing data to the random vectors $S_n(1)$, $n \in \mathbb{N}$, which results in a limiting covariance matrix as specified by Eq.\@ \eqref{eq:gamma_formel_alternativ}. But since the projection of $S_n$ onto $S_n(1)$ is continuous, this covariance matrix must be exactly $\Gamma$. No matter how we arrive at the conclusion, it is now clear from Eq.\@ \eqref{eq:gamma_formel_alternativ} that $\gamma_{ij} = 0$ if $i = 1$ or $j = 1$, since the random vectors $\xi_i$ have $0$ as their first coordinates almost surely.
\end{proof}
\begin{remark}

  An immediate consequence of the last part of the proof of Theorem \ref{thm:dcov_prozesskonvergenz} is that $\gamma_{11} = 0$ if $\mathrm{dcov}(X,Y) = 0$. This is really an if and only if statement, i.e.\@ it holds that $\gamma_{11} > 0$ if $\mathrm{dcov}(X,Y) > 0$. We will show this in the proof of Theorem \ref{thm:schwache_konvergenz_dcov}. While we do not need it, it is also true that $\gamma_{22}, \gamma_{33} > 0$ under the assumptions of the theorem, since $\mathrm{dcov}(X,X), \mathrm{dcov}(Y,Y) > 0$ by Proposition 2.3 in \cite{lyons:2013}. This implies that $h_1(\cdot\, ; \mathbb{P}^{X,X})$ and $h_1(\cdot\, ;\mathbb{P}^{Y,Y})$ do not completely vanish on their respective measure's support, and from here one can proceed as for $\gamma_{11}$ in the proof of Theorem \ref{thm:schwache_konvergenz_dcov} below.
\end{remark}

For the sake of convenience, we recall here the definition of Hadamard differentiability, which we take from Chapter 3.9 in \cite{wellner_vandervaart:1996}. Let $D$ and $E$ be two normed spaces and $D_\phi, D_0$ subsets of $D$. A map $\phi : D_\phi \to E$ is called Hadamard differentiable at $\theta \in D_\phi$ tangentially to $D_0$ if there exists a linear and continuous map $\phi_\theta' : D_0 \to E$ such that $t_n^{-1}[\phi(\theta + t_n h_n) - \phi(\theta)] \to \phi_\theta'(h)$ as $n \to \infty$ for all $h \in D_0$, all real-valued sequences $t_n \to 0$ and all $D$-valued sequences $h_n$, $n \in \mathbb{N}$, with the property that $h_n \to h$ and $\theta + t_n h_n \in D_\phi$ for all $n$.

\begin{lemma}
  \label{lem:hadamard_diffbar}
  Fix some $\varepsilon > 0$ and consider the subset $D_T \subseteq (\ell^\infty[0,1])^3$ consisting of all $g = (g_1, g_2, g_3)$ such that
  $$
    \varepsilon \leq \inf_{0 \leq \lambda \leq 1} \lambda^{-1} g_j(\lambda) \leq \sup_{0 \leq \lambda \leq 1} \lambda^{-1} g_j(\lambda) < \infty
  $$
  for $j = 2,3$, and
  $$
    0 \leq \inf_{0 \leq \lambda \leq 1} \lambda^{-1} g_1(\lambda) \leq \sup_{0 \leq \lambda \leq 1} \lambda^{-1} g_1(\lambda) \leq \varepsilon^{-1}.
  $$
  For any fixed $r > 1/2$, define the function
  \begin{align*}
    T : D_T & \to \ell^\infty[0,1],                                                                         \\
    g       & \mapsto \left[\lambda \mapsto \lambda^r g_1(\lambda)/\sqrt{g_2(\lambda) g_3(\lambda)}\right].
  \end{align*}
  Then $T$ is Hadamard-differentiable everywhere on $D_T$ tangentially to
  $$
    C = \left\{g = (g_1, g_2, g_3) \in (\ell^\infty[0,1])^3 ~\Big|~  \|g(\lambda)\|_2 = \mathcal{O}\left(\sqrt{\lambda \log (1/\lambda)}\right) \textrm{ for } \lambda \downarrow 0\right\},
  $$
  and its Hadamard derivative is given by
  $$
    T_f'(h)[\lambda] = \lambda^r\left\langle \left(\frac{1}{\sqrt{f_2 f_3}}, - \frac{f_1}{2 \sqrt{f_2^3 f_3}}, - \frac{f_1}{2 \sqrt{f_2 f_3^3}}\right)^\top(\lambda),
    h(\lambda) \right\rangle.
  $$
\end{lemma}
\begin{proof}
  The map $T$ is a composition of several easier maps, and one can establish Hadamard differentiability of $T$ via the chain rule for Hadamard derivatives \citep[Lemma 3.9.3 in][]{wellner_vandervaart:1996}. For instance, one can write
  $$
    T = I \circ A,
  $$
  where, with $L_r : (\ell^\infty[0,1])^3 \to \ell^\infty[0,1]$ denoting the constant function $L_r : f \mapsto (\lambda \mapsto \lambda^r)$, $I$ is given by
  \begin{align*}
    I : D_I    & \to \ell^\infty[0,1], \\
    (g_1, g_2) & \mapsto L_rg_1/g_2.
  \end{align*}
  $D_I \subseteq (\ell^\infty[0,1])^2$ is defined just as $D_T$ but with two coordinate functions instead of three. More precisely, let $D_I$ consist of all functions of the form $\pi_{1,2} \circ g$ for some $g \in D_T$, where $\pi_{1,2}$ denotes the projection onto the first two coordinates. Finally, define
  \begin{align*}
    A : D_T & \to (\ell^\infty[0,1])^2,      \\
    g       & \mapsto (g_1, \sqrt{g_2 g_3}).
  \end{align*}
  It is not hard to verify that $A$ is Hadamard differentiable everywhere on $D_T$ with derivative
  $$
    A'_g(h) = \begin{pmatrix}
      h_1 \\
      [g_3 h_2 + g_2 h_3]/[2\sqrt{g_2g_3}]
    \end{pmatrix}.
  $$
  Now fix any $g \in D_I$, and let $t_n$ be a real-valued sequence converging to $0$ and $h_n \in (\ell^\infty[0,1])^2$ a sequence converging uniformly to some $h \in C$ such that $g + t_n h_n \in D_I$ for all $n \in \mathbb{N}$. Then
  $$
    \frac{I(g + t_n h_n) - I(g)}{t_n} = L_rt_n^{-1}\left[\frac{g_1 + t_n h_{n,1}}{g_2 + t_n h_{n,2}} - \frac{g_1}{g_2}\right] \xrightarrow[n \to \infty]{} L_rg_2^{-2}[g_2 h_1 - g_1 h_2] = I_g'(h)
  $$
  uniformly on $[0, 1]$. $I_g$ is obviously linear and continuous with respect to uniform convergence. Because the norms of the sample paths of $h \in C$ are of order $\mathcal{O}\left(\sqrt{\lambda \log(1/\lambda)}\right)$ by construction of the set $C$, we have
  \begin{align*}
    \left|I_g'(h)[\lambda]\right| & \leq L_r(\lambda) g_2^{-2}(\lambda) \|h(\lambda)\|_2\left\|\begin{pmatrix}
                                                                                                 g_2(\lambda) \\ -g_1(\lambda)
                                                                                               \end{pmatrix}\right\|_2 \\
                                  & \leq C_{\varepsilon,h} \lambda^r \lambda^{-2} \sqrt{\lambda \log(1/\lambda)} \lambda    \\
                                  & = \lambda^{r-1/2} \sqrt{\log(1/\lambda)}
  \end{align*}
  for some constant $C_{\varepsilon, h}$ depending only on $\varepsilon$ and $h$. The right-hand side is bounded uniformly in $\lambda \in [0,1]$, and so $I_g'(h) \in \ell^\infty[0,1]$. $I$ is therefore Hadamard-differentiable everywhere on $D_I$ tangentially to $C$. Furthermore, it is easy to see that $A(D_T) \subseteq D_I$ and $A_g'(C) \subseteq C$. By the chain rule for Hadamard derivatives (cited above), $T = I \circ A$ is Hadamard-differentiable everywhere on $D_T$ tangentially to $C$ with derivative
  \begin{align*}
    T_f'(h) = I'_{A(f)} \circ A_f'(h) & = L_r \cdot (f_2 f_3)^{-1} \left[\sqrt{f_2 f_3} h_1 - f_1 \left(f_3 h_2 + f_2 h_3\right) / \left(2 \sqrt{f_2 f_3}\right)\right] \\
                                      & = L_r \cdot \left(\frac{1}{\sqrt{f_2 f_3}} h_1 - \frac{f_1}{2\sqrt{f_2^3 f_3}} h_2 - \frac{f_1}{2\sqrt{f_2 f_3^2}}h_3\right).
  \end{align*}
\end{proof}

\begin{lemma}
  \label{lem:dcor_hilfslemma}
  Define the process $X^*_n \in (\ell^\infty[0,1])^3$ by
  $$
    X_n^*(\lambda) = X_n(\lambda)
  $$
  if $\mathrm{dcov}_{\lfloor n\lambda\rfloor}(X,X) \land \mathrm{dcov}_{\lfloor n\lambda\rfloor}(Y,Y) > [\mathrm{dcov}(X,X) \land \mathrm{dcov}(Y,Y)]/2$, and
  $$
    X_n^*(\lambda) = \frac{\lfloor n\lambda \rfloor}{n} \begin{pmatrix}
      \mathrm{dcov}(X,Y) \\ \mathrm{dcov}(X,X) \\ \mathrm{dcov}(Y,Y)
    \end{pmatrix}
  $$
  otherwise. Then
  $$
    \sup_{0 \leq \lambda \leq 1} \sqrt{n}\left\|X_n^*(\lambda) - X_n(\lambda)\right\|_2 \xrightarrow[n \to \infty]{a.s.} 0,
  $$
  and
  $$
    \sup_{0 \leq \lambda \leq 1} \sqrt{n}\left|\lambda^r\mathrm{dcor}_{\lfloor n \lambda \rfloor}(X,Y) - T(X_n^*)(\lambda)\right| \xrightarrow[n \to \infty]{a.s.} 0,
  $$
  for any $r > 1/2$, where $T$ is the map from Lemma \ref{lem:hadamard_diffbar}.
\end{lemma}
\begin{proof}
  Since $\mathrm{dcov}_n(X,X) \to \mathrm{dcov}(X,X)$ and $\mathrm{dcov}_n(Y,Y) \to \mathrm{dcov}(Y,Y)$ almost surely \citep[Theorem 1 in][]{kroll:2022}, there exists almost surely some $K \in \mathbb{N}$ such that $\mathrm{dcov}_k(X,X) > \mathrm{dcov}(X,X)/2$ and $\mathrm{dcov}_k(Y,Y) > \mathrm{dcov}(Y,Y)/2$ for all $k > K$. Therefore,
  \begin{align*}
    \sup_{0 \leq \lambda \leq 1} \sqrt{n}\left\|X_n^*(\lambda) - X_n(\lambda)\right\|_2 = \max_{k = 1, \dots, K} \sqrt{n}\left\|\frac{k}{n}\begin{pmatrix}
                                                                                                                                             \mathrm{dcov}_k(X,Y) - \mathrm{dcov}(X,Y) \\ \mathrm{dcov}_k(X,X) - \mathrm{dcov}(X,X) \\ \mathrm{dcov}_k(Y,Y) - \mathrm{dcov}(Y,Y)
                                                                                                                                           \end{pmatrix}\right\|_2
    \xrightarrow[n \to \infty]{a.s.} 0.
  \end{align*}
  For the difference of the distance correlations, we note that
  $$
    T(X_n^*)(\lambda) = \begin{cases}
      \lambda^r \mathrm{dcor}_{\lfloor n\lambda\rfloor}(X,Y) & \quad \textrm{if } X_n^*(\lambda) = X_n(\lambda), \\
      \lambda^r\mathrm{dcor}(X,Y)                            & \quad \textrm{otherwise}.
    \end{cases}
  $$
  Therefore, with the same almost surely finite $K \in \mathbb{N}$ as before,
  \begin{align*}
    \sup_{0 \leq \lambda \leq 1} \sqrt{n}\left|\lambda^r\mathrm{dcor}_{\lfloor n \lambda \rfloor}(X,Y) - T(X_n^*)(\lambda)\right| & \leq K^r n^{1/2 - r} \max_{k=1, \ldots, K} \left|\mathrm{dcor}_k(X,Y) - \mathrm{dcor}(X,Y)\right| \\
                                                                                                                                  & \xrightarrow[n \to \infty]{a.s.} 0.
  \end{align*}
\end{proof}

\begin{proof}[Proof of Corollary \ref{cor:dcor_processkonvergenz}]
  We use the notation $X_n, X, X_n^*$ and $T$ as defined in Theorem \ref{thm:dcov_prozesskonvergenz} and Lemmas \ref{lem:hadamard_diffbar} and \ref{lem:dcor_hilfslemma}. By Theorem \ref{thm:dcov_prozesskonvergenz} and Lemma \ref{lem:dcor_hilfslemma}, we have $\sqrt{n}(X_n^* - X) \rightsquigarrow \Gamma^{1/2} W$. By Lemma \ref{lem:hadamard_diffbar} and the functional Delta method \citep[Theorem 3.9.4 in][]{wellner_vandervaart:1996}, we have $\sqrt{n}[T(X_n^*) - T(X)] \rightsquigarrow T_X'(\Gamma^{1/2} W)$. But since $T(X) = Z$, Lemma \ref{lem:dcor_hilfslemma} then gives us
  $$
    \sqrt{n}(Z_n - Z) = \sqrt{n}[T(X_n^*) - T(X)] + o_{a.s.}(1) \rightsquigarrow T_X'\left(\Gamma^{1/2}W\right) = L.
  $$
  The additional statement that $L$ is tight, mean-zero and Gaussian follows from the continuity and linearity of $T_X'$.
\end{proof}

\subsection{Proofs of Theorems \ref{thm:schwache_konvergenz_dcov} and \ref{thm:schwache_konvergenz_dcor}}
\begin{proof}[Proof of Theorem \ref{thm:schwache_konvergenz_dcor}]
  We use the notation of Corollary \ref{cor:dcor_processkonvergenz} with the choice $r = 2$. By that corollary, $\sqrt{n}(Z_n - Z) \rightsquigarrow L$, and
  \begin{align*}
    L(\lambda) & = \lambda \left\langle \begin{pmatrix}
                                          \frac{1}{\sqrt{\mathrm{dcov}(X,X) \mathrm{dcov}(Y,Y)}} \\ - \frac{\mathrm{dcov}(X,Y)}{2\sqrt{\mathrm{dcov}(X,X)^3\mathrm{dcov}(Y,Y)}} \\ - \frac{\mathrm{dcov}(X,Y)}{2\sqrt{\mathrm{dcov}(X,X)\mathrm{dcov}(Y,Y)^3}}
                                        \end{pmatrix},
    \Gamma^\frac{1}{2} W(\lambda)\right\rangle                                                                                                                                                                         \\
               & = \lambda \left\langle \Gamma^\frac{1}{2}\begin{pmatrix}
                                                            \frac{1}{\sqrt{\mathrm{dcov}(X,X) \mathrm{dcov}(Y,Y)}} \\ - \frac{\mathrm{dcov}(X,Y)}{2\sqrt{\mathrm{dcov}(X,X)^3\mathrm{dcov}(Y,Y)}} \\ - \frac{\mathrm{dcov}(X,Y)}{2\sqrt{\mathrm{dcov}(X,X)\mathrm{dcov}(Y,Y)^3}}
                                                          \end{pmatrix},
    W(\lambda)\right\rangle
  \end{align*}
  where we have used the self-adjointness of the matrix $\Gamma^\frac{1}{2}$. Using the notation
  $$
    \alpha = \alpha(\mathbb{P}^{X,Y}) := \Gamma^\frac{1}{2}\begin{pmatrix}
      \frac{1}{\sqrt{\mathrm{dcov}(X,X) \mathrm{dcov}(Y,Y)}} \\ - \frac{\mathrm{dcov}(X,Y)}{2\sqrt{\mathrm{dcov}(X,X)^3\mathrm{dcov}(Y,Y)}} \\ - \frac{\mathrm{dcov}(X,Y)}{2\sqrt{\mathrm{dcov}(X,X)\mathrm{dcov}(Y,Y)^3}}
    \end{pmatrix}
  $$
  the above identity can be written as $L(\lambda) = \lambda \langle\alpha, W(\lambda)\rangle$. $\left\langle \alpha, W\right\rangle$ is a centred Gaussian process with covariance function
  \begin{align*}
    \mathrm{Cov}\left(\left\langle \alpha, W(\lambda_1)\right\rangle, \left\langle \alpha, W(\lambda_2)\right\rangle\right) & = \mathrm{Cov}\left(\sum_{i=1}^3 \alpha_i W_i(\lambda_1), \sum_{j=1}^3 \alpha_j W_j(\lambda_2)\right) \\
                                                                                                                            & = \sum_{i=1}^3 \alpha_i^2 \mathrm{Cov}\left(W_i(\lambda_1), W_i(\lambda_2)\right)                     \\
                                                                                                                            & = \|\alpha\|_2^2 (\lambda_1 \land \lambda_2).
  \end{align*}
  Thus, $\left\langle \alpha, W\right\rangle$ is equal in distribution to $\|\alpha\|_2 B$ for a standard Brownian motion $B = (B(\lambda))_{\lambda \geq 0}$. Consequently,
  \begin{equation}
    \label{eq:Zn_konvergenz}
    \sqrt{n}(Z_n - Z) \rightsquigarrow \left[\|\alpha\|_2 \lambda B(\lambda)\right]_{0 \leq \lambda \leq 1}.
  \end{equation}
  Furthermore, we have the identity
  \begin{align*}
     & \sqrt{n}\left(\lambda \,\mathrm{dcor}_{\lfloor n\lambda \rfloor}(X,Y) - \lambda\, \mathrm{dcor}_n(X,Y)\right)                                                                                   \\
     & \quad= \sqrt{n}\left(\left[\lambda \,\mathrm{dcor}_{\lfloor n\lambda \rfloor}(X,Y) - \lambda\, \mathrm{dcor}(X,Y)\right] - \lambda\left[ \mathrm{dcor}_n(X,Y) -\mathrm{dcor}(X,Y)\right]\right) \\
     & \quad = \sqrt{n}[Z_n(\lambda) - Z(\lambda)] - \lambda \sqrt{n}[Z_n(1) - Z(1)].
  \end{align*}
  Define the maps $\phi : \ell^\infty[0,1] \to \ell^\infty[0,1]$ and $\psi : \ell^\infty[0,1] \to \mathbb{R}^2$ by
  $$
    \phi(S)[\lambda] = S(\lambda) - \lambda S(1)
  $$
  and, with $\|\cdot\|_{L_2(\gamma)}$ denoting the $L_2$ norm with respect to $\gamma$,
  $$
    \psi(S) = \begin{pmatrix}
      S(1) \\
      \|\phi(S)\|_{L_2(\gamma)}.
    \end{pmatrix}
  $$
  Then $\phi$ and therefore $\psi$ are continuous, and it holds by Eq.\@ \eqref{eq:Zn_konvergenz} and the continuous mapping theorem \citep[Theorem 1.3.6 in][]{wellner_vandervaart:1996} that
  $$
    \sqrt{n}\begin{pmatrix}
      \mathrm{dcor}_n(X,Y) - \mathrm{dcor}(X,Y) \\ V_{n,\mathrm{dcor}}
    \end{pmatrix} = \psi(\sqrt{n}[Z_n - Z]) \xrightarrow[n \to \infty]{\mathcal{D}} \psi\left[\left[\|\alpha\|_2 \lambda B(\lambda)\right]_{0 \leq \lambda \leq 1}\right],
  $$
  which proves Theorem \ref{thm:schwache_konvergenz_dcor} with $\tau = \|\alpha\|_2$.
\end{proof}

\begin{proof}[Proof of Theorem \ref{thm:schwache_konvergenz_dcov}]
  By Theorem \ref{thm:dcov_prozesskonvergenz} and using the fact that
  $$
    \left|\frac{\lfloor n\lambda \rfloor}{n} \mathrm{dcov}_{\lfloor n\lambda\rfloor}(X,Y) - \lambda \mathrm{dcov}_{\lfloor n\lambda \rfloor}(X,Y)\right| \leq \frac{1}{n} \mathrm{dcov}_{\lfloor n\lambda\rfloor}(X,Y) = \mathcal{O}_{a.s.}\left(\frac{1}{n}\right)
  $$
  due to the almost sure convergence of $\mathrm{dcov}_n(X,Y)$ \citep[Theorem 1 in][]{kroll:2022}, we get
  \begin{equation}
    \label{eq:dcov_konvergenz}
    \sqrt{n}[\lambda\,\mathrm{dcov}_{\lfloor n\lambda\rfloor}(X,Y) - \lambda\,\mathrm{dcov}(X,Y)]_{0 \leq \lambda \leq 1} \rightsquigarrow \langle \gamma_{\mathrm{row}, 1}^\top, W\rangle \overset{\mathcal{D}}{=} \|\gamma_{\mathrm{row}, 1}\|_2  B,
  \end{equation}
  where $\gamma_{\mathrm{row}, 1}$ is the first row of the covariance matrix $\Gamma^{1/2}$, $B$ is a standard Brownian motion and the equality sign overset with the symbol $\mathcal{D}$ signifies equality in distribution. But the first row of the covariance matrix $\Gamma = (\gamma_{ij})_{1 \leq i,j \leq 3}$ contains at least one non-zero entry, namely
  $$
    \gamma_{11} = \mathbb{E}\left[h_1\left(Z_1 ; \mathbb{P}^{X,Y}\right)\right] + 2 \sum_{k=2}^\infty \mathbb{E}\left[h_1\left(Z_1 ; \mathbb{P}^{X,Y}\right)h_1\left(Z_k ; \mathbb{P}^{X,Y}\right)\right] > 0,
  $$
  where the specific form of $\gamma_{11}$ is due to Eqs.\@ \eqref{eq:gamma_identity} and \eqref{eq:xi_identity}, and the fact that it is strictly greater than $0$ is a consequence of Corollary 10.8 (II) in \cite{bradley:2007}. This also implies that the first row of the square root $\Gamma^{1/2}$ cannot consist of only $0$-entries. Therefore $\|\gamma_{\mathrm{row}, 1}\|_2 > 0$, and with this the claim of Theorem \ref{thm:schwache_konvergenz_dcov} follows from Eq.\@ \eqref{eq:dcov_konvergenz} and the continuous mapping theorem just like in the proof of Theorem \ref{thm:schwache_konvergenz_dcor}.
\end{proof}


\subsection{Proof of Proposition \ref{cor:test_dcov}}
We have the identity
$$
  \frac{ \mathrm{dcor}_n(X,Y) - \Delta}{V_{n,\mathrm{dcor}}} = \frac{ \mathrm{dcor}_n(X,Y) - \mathrm{dcor}(X,Y)}{V_{n,\mathrm{dcor}}} + \frac{\sqrt{n}(\mathrm{dcor}(X,Y) - \Delta)}{\sqrt{n} V_{n,\mathrm{dcor}}}.
$$
By Theorem \ref{thm:schwache_konvergenz_dcor} and the continuous mapping theorem, we have
$$
  \frac{ \mathrm{dcor}_n(X,Y) - \mathrm{dcor}(X,Y)}{V_{n,\mathrm{dcor}}} \xrightarrow[n \to \infty]{\mathcal{D}} W.
$$
Thus, by the continuous mapping theorem and Slutsky's lemma,
$$
  \frac{ \mathrm{dcor}_n(X,Y) - \Delta}{V_{n,\mathrm{dcor}}} \xrightarrow[n \to \infty]{\mathcal{D}} \begin{cases}-\infty \qquad &\textrm{if } \mathrm{dcor}(X,Y) < \Delta, \\ W \qquad &\textrm{if } \mathrm{dcor}(X,Y) = \Delta, \\ +\infty \qquad &\textrm{if } \mathrm{dcor}(X,Y) > \Delta.\end{cases}
$$

\subsection{Proof of Theorem \ref{thm:independence_weak_convergence}}

It is not complicated (but tedious) to verify that, due to the independence between $X$ and $Y$, the kernel function $h$ is $1$-degenerate and that the function $h_2 = h_2(\cdot, \cdot \, ; \mathbb{P}^{X,Y})$ is the kernel of the second-order term in the Hoeffding decomposition of $h$ as explained in Eq.\@ \eqref{eq:hoeffding_decomposition}. Just as in the proof of Theorem \ref{thm:dcov_prozesskonvergenz}, we can show that Eq.\@ \eqref{eq:kernel_h_moment_bound} holds. By Corollary \ref{cor:hoeffding_zerlegung_bestimmt_asymptotik}, we then get
\begin{equation}
  \label{eq:independence_identity_second_order}
  \mathrm{dcov}_{\lfloor n \lambda \rfloor}(X,Y) = \frac{15}{(\lfloor n\lambda \rfloor)^{2}} \sum_{1 \leq i,j \leq \lfloor n\lambda \rfloor} h_2\left(Z_i, Z_j ; \mathbb{P}^{X,Y}\right) + R_n(\lambda),
\end{equation}
where
\begin{equation}
  \label{eq:remainder_Rn}
  \sup_{0 \leq \lambda \leq 1} |R_n(\lambda)| = o_\mathbb{P}\left(n^{-1}\right)
\end{equation}
The moments $\mathbb{E}\left[|h_2(Z_i,Z_j)|^{2+\varepsilon/2}\right]$ are bounded uniformly in $i,j$ as a consequence of Eq.\@ \eqref{eq:kernel_h_moment_bound} and Jensen's inequality. Furthermore, by Proposition 3.5 in \cite{lyons:2013}, we have the identity
\begin{align*}
  h_2\left(z_i, z_j ; \mathbb{P}^{X,Y}\right) & = c \left\langle \hat{\phi}(x_i), \hat{\phi}(x_j)\right\rangle_{H_1}\left\langle \hat{\psi}(y_i), \hat{\psi}(y_j)\right\rangle_{H_2} \\
                                              & = c\left\langle \hat{\phi}(x_i) \otimes \hat{\psi}(y_i), \hat{\phi}(x_j)\otimes \hat{\psi}(y_j)\right\rangle_{H_1 \otimes H_2},
\end{align*}
where $c > 0$ is some universal constant, $\hat{\phi} : \mathcal{X} \to H_1$ and $\hat{\psi} : \mathcal{Y} \to H_2$ are certain embeddings into two separable Hilbert spaces $H_1$ and $H_2$, and $H_1 \otimes H_2$ denotes the tensor product Hilbert space of $H_1$ and $H_2$. The details behind all of these constructions can be found in Section 3 in \cite{lyons:2013}. For our purposes, it suffices to note that this identity of $h_2$ as an inner product in some Hilbert space also means that it is non-negative definite, i.e.\@
$$
  \sum_{i,j=1}^m \alpha_i \alpha_j h_2\left(z_i, z_j ; \mathbb{P}^{X,Y}\right) \geq 0
$$
for all $m \in \mathbb{N}$, $\alpha_1, \ldots, \alpha_m \in \mathbb{R}$ and $z_1, \ldots, z_m \in \mathcal{X} \times \mathcal{Y}$. By the Moore-Aronszajn theorem \citep[Theorem 2 in][]{aronszajn:1943}, there is a unique Reproducing Kernel Hilbert Space $H$ associated with $h_2$. Furthermore, $h_2$ is obviously continuous, and we have already seen that it has finite moments on the diagonal. By Lemma 2.3 and Corollary 3.5 in \cite{steinwart_scovel:2012}, we therefore get the following Mercer representation of $h_2$: For any fixed $z$ in the support of $\mathbb{P}^{X,Y}$, it holds for all $z' \in \mathcal{X} \times \mathcal{Y}$ that
\begin{equation}
  \label{eq:mercer}
  h_2\left(z,z' ; \mathbb{P}^{X,Y}\right) = \sum_{k=1}^\infty \mu_k \varphi_k(z) \varphi_k(z'),
\end{equation}
where the objects $(\mu_k, \varphi_k)$ are as in the statement of our theorem, and the series on the right-hand side converges absolutely. In the following, we assume that $\mu_k > 0$ for all $k \in \mathbb{N}$; the case where $\mu_k > 0$ for only finitely many $k \in \mathbb{N}$ can be covered by essentially the same arguments. $\mathcal{X}$ and $\mathcal{Y}$ are separable metric spaces, and hence second-countable Hausdorff spaces. Products of second-countable Hausdorff spaces are again second-countable and Hausdorff, which in particular implies $\mathbb{P}^{X,Y}(\mathcal{X} \times \mathcal{Y} \setminus S) = 0$, where $S$ denotes the support of $\mathbb{P}^{X,Y}$. This implies that Eq.\@ \eqref{eq:mercer} holds for $\mathbb{P}^{X,Y} \otimes \mathbb{P}^{X,Y}$-almost all points $(z,z')$.

By Eqs.\@ \eqref{eq:independence_identity_second_order} and \eqref{eq:mercer}, we get
\begin{align}
  \begin{split}
    \label{eq:nQn_identity_1}
    nQ_n(\lambda) & = \frac{15}{n} \sum_{1 \leq i,j \leq \lfloor n\lambda \rfloor} \sum_{k=1}^\infty \mu_k \varphi_k(Z_i)\varphi_k(Z_j) + n R_n(\lambda)      \\
                  & = 15  \sum_{k=1}^\infty \frac{\mu_k}{n}\sum_{1 \leq i,j \leq \lfloor n\lambda \rfloor}\mu_k \varphi_k(Z_i)\varphi_k(Z_j) + n R_n(\lambda) \\
                  & = 15 \sum_{k=1}^\infty \mu_k \left(\frac{1}{\sqrt{n}} \sum_{i=1}^{\lfloor n\lambda \rfloor}\varphi_k(Z_i)\right)^2 + nR_n(\lambda)
  \end{split}
\end{align}
almost surely. Now define the separable Hilbert space
$$
  \ell_2(\mu) = \left\{(x_k)_{k \in \mathbb{N}} ~\Big|~\sum_{k=1}^\infty \mu_k x_k^2 < \infty\right\}
$$
equipped with the inner product
$$
  \langle x,y\rangle_{\ell_2(\mu)} = \sum_{k=1}^\infty \mu_k x_k y_k,
$$
and write
\begin{align*}
  f_i =                    & (\varphi_k(Z_i))_{k \in \mathbb{N}},                                                                        \\
  \zeta_n(\lambda)         & = \frac{1}{\sqrt{n}} \sum_{i=1}^{\lfloor n\lambda \rfloor} f_i,                                             \\
  \tilde{\zeta}_n(\lambda) & = \zeta_n(\lambda) + \frac{n\lambda - \lfloor n\lambda \rfloor}{\sqrt{n}} f_{\lfloor n\lambda \rfloor + 1}.
\end{align*}
The $f_i$ are strictly stationary and almost surely $\ell_2(\mu)$-valued by Eq.\@ \eqref{eq:mercer}, and Eq.\@ \eqref{eq:nQn_identity_1} can be restated as
\begin{equation}
  \label{eq:nQn_identity_3}
  nQ_n(\lambda) = 15 \|\zeta_n(\lambda)\|_{\ell_2(\mu)}^2 + nR_n(\lambda).
\end{equation}
Furthermore, by the degeneracy of the kernel $h_2$, it holds for any $k \in \mathbb{N}$ that
$$
  \mathbb{E}\left[\varphi_k(Z_1)\right] = \mu_k^{-1} \int \mathbb{E} \left[h_2\left(Z_1, z ; \mathbb{P}^{X,Y}\right)\right] \varphi_k(z) ~\mathrm{d}\mathbb{P}^{X,Y}(z)= 0,
$$
which also implies that
$$
  \mathbb{E} \langle f_1, x\rangle_{\ell_2(\mu)} = \mathbb{E}\left[\sum_{k=1}^\infty \mu_k x_k \varphi_k(Z_1)\right] = \sum_{k=1}^\infty \mu_k x_k \mathbb{E}\varphi_k(Z_1) = 0
$$
for any $x \in \ell_2(\mu)$, i.e.\@ the $f_i$ are centred at expectation \citep[in the general sense of Hilbert space valued random variables; cf.\@ Definition 2.3 in][]{kuo:1975}. Furthermore, for any $p > 0$,
$$
  \mathbb{E}\left[\|f_1\|_{\ell_2(\mu)}^p\right] = \mathbb{E}\left[\left(\sum_{k=1}^\infty \mu_k \varphi_k^2(Z_1)\right)^{p/2}\right] = \mathbb{E}\left[h_2\left(Z_1, Z_1 ; \mathbb{P}^{X,Y}\right)^{p/2}\right]
$$
by Eq.\@ \eqref{eq:mercer}, and so the $f_i$ have finite $(4+\varepsilon)$-moments. Therefore,
\begin{equation}
  \label{eq:zeta_n_convergence}
  \tilde{\zeta}_n \rightsquigarrow W = (W_k)_{k \in \mathbb{N}}
\end{equation}
by Theorem 11 a) in \cite{dehling:1983}, where the convergence holds in the space of all continuous functions from $[0,1]$ to $\ell_2(\mu)$, equipped with the supremum norm. The process $(W_k)_{k \in \mathbb{N}}$ is that from the statement of our theorem. Furthermore,
\begin{equation}
  \label{eq:diff_normal_continuous}
  \mathbb{E}\left[\sup_{0 \leq \lambda \leq 1}\|\zeta_n - \tilde{\zeta}_n\|_{\ell_2(\mu)}^{2+\varepsilon}\right] \leq \sum_{i=1}^n \mathbb{E}\left[\|n^{-1/2} f_i\|_{\ell_2(\mu)}^{2+\varepsilon}\right] = n^{-\varepsilon/2} \mathbb{E}\left[\|f_1\|_{\ell_2(\mu)}^{2+\varepsilon}\right] \xrightarrow[n \to \infty]{} 0.
\end{equation}
It follows from Eqs.\@ \eqref{eq:remainder_Rn} and \eqref{eq:nQn_identity_3} through \eqref{eq:diff_normal_continuous}, combined with the continuous mapping theorem \citep[Theorem 1.3.6 in][]{wellner_vandervaart:1996} that
$$
  nQ_n \rightsquigarrow 15 \|W\|_{\ell_2(\mu)}^2 = 15 \sum_{k=1}^\infty \mu_k W_k^2.
$$
This concludes the proof.

\section*{Acknowledgements}
Financial support by the Deutsche Forschungsgemeinschaft (DFG, German Research Foundation; Project-ID 520388526;  TRR 391:  Spatio-temporal Statistics for the Transition of Energy and Transport) is gratefully acknowledged.

\bibliographystyle{abbrvnat}
\bibliography{rel_hyp_dcov}
\end{document}